\documentclass[11pt,a4paper]{article}
\usepackage{amsfonts}
\usepackage{bbm}
\usepackage[authoryear, sort]{natbib}
\usepackage{amsmath,amssymb,array,graphicx,mathrsfs}
\usepackage{enumerate}
\usepackage[english]{babel} 
\usepackage[shortlabels]{enumitem}
\usepackage{graphicx}
\usepackage[dvipsnames]{xcolor}
\usepackage{dsfont}
\usepackage{float}
\usepackage{epsfig}
\usepackage{epstopdf}
\usepackage[normalem]{ulem}
\usepackage{multirow}
\usepackage{leftidx}
\usepackage{setspace}
\usepackage{bbm} 
\usepackage{subcaption}
\usepackage{tikz}
\usetikzlibrary{arrows.meta}     
\usepackage{amsthm}
\usepackage{color}
\usepackage{mathtools}
\usepackage{multirow,cases}
\usepackage{booktabs}
\usepackage[colorlinks,linkcolor=black,anchorcolor=black,citecolor=black]{hyperref}
\usetikzlibrary{shapes,arrows,positioning,fit,calc}
\usepackage[titletoc,toc,title]{appendix}
\usepackage{verbatim}
\usepackage{tensor}
\usepackage{hyperref}
\usepackage{cleveref}
\usepackage{empheq} 
\usepackage{algorithm}
\usepackage{algorithmic}
\usepackage{tikz}
\usetikzlibrary{shapes,backgrounds}
\theoremstyle{plain}

\newtheorem{definition}{Definition}
\newtheorem{theorem}{Theorem}

\newtheorem{lemma}{Lemma}

\newtheorem{assumption}{Assumption}

\newtheorem{proposition}{Proposition}
\newtheorem{condition}{Condition}

\newcommand{\ordinalsuffix}[1]{%
  \ifcase#1th\or st\or nd\or rd\else th\fi%
}

\AtBeginDocument{%
  \renewcommand{\today}{%
    \number\day\textsuperscript{\ordinalsuffix{\day}}~%
    \ifcase\month\or
      January\or February\or March\or April\or May\or June\or
      July\or August\or September\or October\or November\or December\fi
    , \number\year%
  }%
}

\usepackage[a4paper,top=2.7cm,bottom=2.8cm,left=2.2cm,right=2.2cm,marginparwidth=1.75cm]{geometry}

\title{
Learning under  Opponent Unawareness in Linear-Quadratic Stochastic Games
} 
\author{}

\date{\today}

\author{
	{Dantong Chu\footnote{Department of Systems Engineering and Engineering Management, The Chinese University of Hong Kong, Hong Kong, China.
    Email:dtchu@link.cuhk.edu.hk}}
	\and
	{Xuefeng Gao\footnote{Department of Systems Engineering and Engineering Management, The Chinese University of Hong Kong, Hong Kong, China.
    Email:xfgao@se.cuhk.edu.hk}}
        \and
	{Yufei Zhang\footnote{Department of Mathematics, Imperial College London, UK.
    Email: yufei.zhang@imperial.ac.uk}}
}

\begin{document}

\maketitle
\begin{abstract}
As firms increasingly deploy machine learning for strategic decision-making, 
understanding algorithmic interactions has become central to operations research and economics.
This paper studies learning in infinite-horizon, nonzero-sum linear-quadratic stochastic games under a \emph{radically uncoupled} information structure, where players are either unaware of opponents or strategically oblivious, observing only a common state and their own action history. Under this minimal information,
we analyze an asynchronous decentralized learning process in which each player independently runs a single-agent  $\epsilon$-greedy iterated least-squares algorithm. 
We prove that, despite being unable to identify the system parameters, players' learning dynamics converge almost surely to the complete-information Nash equilibrium and characterize the convergence rate. We then apply the framework to a dynamic Cournot competition with sticky prices. Numerical experiments validate the theoretical results and show that learning under limited information reduces firm profits under both low and high price stickiness, while total surplus declines and market concentration increases when price stickiness is high. Publicly revealing aggregate market output substantially accelerates convergence and mitigates these welfare losses.
\end{abstract}
	
\textbf{Keywords:} Radically Uncoupled, Multi-agent  Learning, Linear Quadratic Stochastic Game, Dynamic Cournot Competition, Price Stickiness

\section{Introduction}

Driven by advances in machine learning and artificial intelligence, autonomous pricing and automated trading algorithms have become ubiquitous across e-commerce and financial markets. 
In practice, these algorithms plausibly operate under a radically uncoupled information structure, termed as in  \cite{foster2006regret}. Under this paradigm, agents/algorithms either do not recognize the presence of their peers, lacking access to their actions, or remain strategically oblivious by treating competitors’ dynamic behavior as exogenous environmental noise.  Such obliviousness arises both by necessity, when limited information (e.g., in over-the-counter markets) or substantial market noise obscures competitors' signals, and by design, as it enables the use of computationally simpler single-agent   learning algorithms. While this framework mirrors the informational realities of modern tech and financial markets, it could pose severe economic risks. 
Recent literature on algorithmic collusion demonstrates that such uncoupled agents, despite independently employing reinforcement learning algorithms, may inadvertently converge to seemingly collusive outcomes rather than to a competitive Nash equilibrium (see e.g. \cite{abada2023artificial}).

This gap reflects a fundamental limitation in our theoretical understanding. While radically uncoupled learning is known to converge in simple stateless repeated games \citep{bervoets2020learning, xiong2024thompson, ba2025doubly}, convergence guarantees remain largely absent for stochastic (Markov) games, where the state evolves according to the current state and the joint actions of all players \citep{shapley1953stochastic, ozdaglar2021independent}.  
These state dynamics substantially complicate learning under radically uncoupled information, as   players will misjudge their impact on the state in the absence of observations of their opponents  \citep{zhang2021multi}.  Existing convergence guarantees are therefore largely limited to specialized settings, such as (two-player) zero-sum games \citep{sayin2021decentralized, chen2024last} and potential games \citep{maheshwari2025independent}, and   assume finite state and action spaces  to make the analysis tractable. Little is known about convergence for general $N$-player  stochastic games with continuous state and action spaces.

In this paper, we study radically uncoupled learning in infinite-horizon, nonzero-sum linear-quadratic (LQ) stochastic games. LQ games are a fundamental class of models for dynamic strategic interactions in continuous state and action spaces and have been widely used in management science, economics, and financial engineering; see Section \ref{sec:moti} for an application to dynamic Cournot competition and \citealt{fershtman1987dynamic, dockner2000differential, heston2024dynamic, muhle2026pre} for related examples. 
By combining linear state dynamics with quadratic costs, LQ games admit closed-form equilibrium solutions and provide rigorous benchmarks for evaluating learning algorithms in dynamic   games.  Our central question is to understand the behavior of radically uncoupled  learning agents: When agents learn from their own experiences while remaining unaware of or strategically oblivious to their competitors, can decentralized learning dynamics lead to a Nash equilibrium?

\subsection{A Motivating Application: Dynamic Cournot Competition}\label{sec:moti}

To motivate our learning framework, we consider a dynamic Cournot competition model with sticky prices as an illustrative application. This classical model captures multi-period quantity competition under market frictions and has been extensively studied   \citep{fershtman1987dynamic,heston2024dynamic}. 
Consider a discrete-time, infinite-horizon market comprising $M$ firms. In each period $t \in \mathbb{N}$, each firm $m$ chooses its output quantity $q^m_t$. The market price $p_t$ evolves according to the following discrete-time process:
\begin{equation}\label{dynamic.oligopoly}
p_{t+1} = s p_t + (1 - s) \left( a - b \sum\limits_{m=1}^M q^m_t \right) + \omega_t,  
\end{equation}
where $s \in (0,1)$ is the price stickiness coefficient, $a > 0$ is the demand intercept, $b > 0$ is the slope of the inverse demand function, and the noise term $\{\omega_t\}_{t=0}^\infty$ is an i.i.d.~random shock (e.g., consumer mood swings, market noise) with zero mean and variance $\sigma_\omega^2$. 
In this model, price adjusts gradually toward the level implied by the current demand function rather than instantaneously, with the speed of adjustment governed by a stickiness parameter $s \in (0,1)$ (e.g., due to market inertia). 
The goal of each firm is to select a sequence of output quantities $\{q^m_t\}_{t=0}^\infty$ to maximize its total discounted expected profit:
\begin{equation} \label{oligopoly.profitfunction}
\mathbb{E}\left[\sum_{t=0}^{\infty} \rho^{t} \left( p_{t+1} - c_m - \frac12 q^m_t \right) q^m_t\right],
\end{equation}
where \(\rho \in (0,1)\) is the discount factor, and the term \((c_m + \frac12 q^m_t) q^m_t\) represents the production cost, consisting of a linear term \(c_m q^m_t\) with private constant marginal cost parameter \(c_m>0\) and a quadratic term \(\frac12 (q^m_t)^2\) reflecting increasing marginal production costs \citep{fershtman1987dynamic}. 
The feature of the sticky price places the model within the class of stochastic games rather than repeated games, as the price acts as a state variable.

In new or emerging markets, firms may have limited information about their competitors or even be unaware of their existence.   In particular, 
at time $t+1$,
each firm $m$ observes only the historical price sequence $\{p_k\}_{k=0}^{t+1}$ and its own quantity decisions 
$\{q^m_k\}_{k=0}^{t}$. 
Moreover, the key structural parameters of \eqref{dynamic.oligopoly}, including $s$, $a$, $b$, and $\sigma_\omega$, are also unknown to all firms. 
Under this information constraint, each firm behaves as if it were the sole major participant in the market and treats the effects of other firms' actions as unobserved noise. Consequently,  firm $m$ forms a misspecified belief about price dynamics: 
\begin{equation}\label{oligopoly.statisticalmodel}
p_{t+1} = s' p_t + (1 - s') \left( a' - b' q^m_t \right) + \omega'_t,  
\end{equation}
where $s'$ denotes the firm's perceived price stickiness coefficient, $a'$ and $b'$ are the perceived intercept and slope of the inverse demand function, and $\omega'_t$ represents the perceived market noise with mean zero and variance $\sigma_m^2$. 
The firm's objective remains to choose an output sequence  $\{q^m_t\}_{t=0}^\infty$ to maximize its total discounted expected profit \eqref{oligopoly.profitfunction}, but 
subject to \emph{its perceived and misspecified  price dynamics
\eqref{oligopoly.statisticalmodel}}.

Since the parameters of the perceived dynamics \eqref{oligopoly.statisticalmodel} are unknown, firm $m$  adopts an online learning framework that balances exploration and exploitation.
Specifically, 
each firm independently employs an \(\epsilon \)-greedy learning algorithm based on least-squares estimation of the perceived model parameters (e.g., $s', a', b'$, $\sigma_m^2$) over a sequence of expanding epochs. 
Within each epoch, firm $m$ uses a linear feedback strategy of the form  \(q_{t}^{m}=-F_{m}p_{t}+f_{m}+\alpha_{m}v_{t}^{m}\), where   \(F_{m}\) and \(f_{m}\) are derived from the estimated model parameters, and \(v_{t}^{m}\) is an independent exploration noise scaled by the exploration coefficient \(\alpha_{m}\). Linear inverse demand, quadratic costs, and linear strategies are standard assumptions in the Cournot competition literature; see, e.g., \citealt{vives1984duopoly,gal1985information,hauk2001secret,bonatti2017dynamic}.

The learning process described above provides a natural model for how firms adapt in emerging markets with limited information. In contrast, dynamic Cournot games with complete information are typically analyzed through feedback (closed-loop) Nash equilibria, which characterize firm behavior in mature markets \citep{fershtman1987dynamic,heston2024dynamic}. This contrast raises two fundamental questions about market evolution. First, can the feedback Nash equilibrium of a mature market emerge from radically uncoupled learning?
That is, when firms learn solely from their own experiences under model misspecification and without knowledge of competitors, do their behaviors converge to the complete-information equilibrium? Second, how does the learning process shape market outcomes, such as economic welfare and market concentration, during this transition? This paper addresses these questions by establishing convergence guarantees   and characterizing the resulting market dynamics under different degrees of price stickiness.


\subsection{Our Contributions}

 Motivated by the dynamic Cournot competition model with sticky prices in Section~\ref{sec:moti}, we study online learning under radically uncoupled information in a broad class of infinite-horizon, $N$-player nonzero-sum LQ stochastic games. Each player observes only the common state and chooses her own actions, while the system dynamics and all information regarding other players remain unknown. 
 Consequently, each player treats the environment as a single-agent problem and applies a canonical $\epsilon$-greedy online learning algorithm following \citep{mania2019certainty, simchowitz2020naive}. We formalize this behavior through a radically uncoupled multi-agent $\epsilon$-greedy iterated least-squares (ILS) algorithm with growing epochs. At each epoch, players estimate a perceived (and generally misspecified) model, compute linear policies from the estimated dynamics, and inject decaying exploration noise. The algorithm requires no information about other players and allows for heterogeneous update schedules and exploration rates.

   Our contributions are twofold:  
\begin{itemize}
    \item 
    We establish the first convergence guarantee  for radically uncoupled learning in LQ stochastic games   with a common state affected by all players' actions.

The main challenge is that, without information about their opponents, players cannot disentangle their own impact on the state dynamics from that of other players, and will   misattribute the effects of other players’ actions to system noise. This prevents them from   identifying the true system dynamics, unlike in the single-agent setting \citep{simchowitz2020naive}.
We overcome this challenge through a key insight: although each player's perceived model is misspecified, the learned parameters implicitly capture the \emph{aggregate strategic effects} of all other players. Consequently, each player effectively computes a \emph{noisy best response} to the opponents' previous strategy profile. We further quantify the bias in this estimated best response induced by all players'  exploration noises and show that it vanishes asymptotically.

Building on this insight, we prove that the multi-agent $\epsilon$-greedy ILS algorithm converges to the   complete-information Nash equilibrium under a stability condition and characterize its convergence rate, 
both in finite samples (with high probability) and asymptotically (almost surely) (Theorem~\ref{thm.convergencyne}).
   The result applies to  asynchronous and random update schedules.
   The convergence rate reveals a fundamental interplay between  the intrinsic stability of the Nash equilibrium, quantified by a stability constant $\zeta\in(0,1)$, and the players' learning hyperparameters including the epoch growth rate $\lambda$ and the exploration-noise schedule. In particular, when all players adopt the optimal exploration-noise decay schedule from the single-agent setting, the convergence rate exhibits a bifurcation. 
  If the equilibrium is sufficiently stable, the algorithm (more precisely, the linear strategy parameters) achieves a last-iterate rate $\tilde{O}(t^{-1/4})$, matching the rate that enables optimal regret guarantees in the single-agent LQ learning benchmark \citep{simchowitz2020naive}. 
Otherwise, the performance 
 is limited by the game's stability and the convergence rate is $\tilde{O}\left(t^{\frac{\ln \zeta}{\ln \lambda}}\right)$, 
 reflecting the convergence behavior 
  of the underlying idealized best-response dynamics.  
Finally, numerical experiments show that when the equilibrium is not globally stable, the learning dynamics may exhibit persistent oscillations.

    \item We apply our framework to a dynamic Cournot competition model with sticky prices and study the economic consequences of decentralized learning.

    Numerical experiments in duopoly markets validate the theoretical convergence rates and characterize transitional market outcomes under different degrees of price stickiness. We examine firm profits, total surplus, and market concentration along the path from initial learning to equilibrium. While firms experience profit losses relative to the equilibrium benchmark during the transition in both low- and high-stickiness markets, welfare implications differ substantially across regimes: total surplus declines persistently only under high price stickiness, and market concentration increases in both cases, with larger effects under high stickiness. These findings highlight the importance of accelerating convergence in markets with strong frictions. We further show that publicly releasing aggregate market quantity substantially improves learning speed and mitigates the adverse transitional effects. Finally, by considering a (weakly) nonlinear inverse demand function, we demonstrate that the multi-agent $\epsilon$-greedy ILS algorithm remains effective beyond the LQ setting, converging to an accurate linear approximation of the equilibrium.

\end{itemize}

\subsection{Related Work}\label{sec.relatedwork}

Our work relates to several broad research areas, including learning in games,  
multi-agent reinforcement learning, 
and Cournot competitions. 
Given the vast literature in each area, we focus our discussion on the studies most closely related to our work.

 \paragraph{Nash equilibrium computation for LQ games.}
A growing body of work has developed learning algorithms   for computing Nash equilibria in $N$-player LQ games. When the system parameters are known, existing approaches include policy-gradient methods \citep{mazumdar2020policy, hambly2023policy,plank2026learning} and iterative Lyapunov- or Riccati-based schemes \citep{nortmann2024nash}. \citep{vamvoudakis2015non, hambly2023policy,nortmann2024nash} further analyze settings with unknown system parameters, but require access to joint state-action data from all players or coordinated exploration schemes that specify the behavior of all agents.

Our work differs from this literature in two fundamental aspects. Conceptually, rather than designing algorithms to compute Nash equilibria, we study whether the learning dynamics of strategic agents converge to a complete-information Nash equilibrium under radically uncoupled information, where players are unaware of and cannot observe their opponents. Technically, this information constraint prevents players from identifying the true game model. Each player instead learns a misspecified single-agent model from noisy observations and applies a standard single-agent learning algorithm, treating the strategic behavior of others as part of the environment. Our analysis therefore requires characterizing how the learned policy approximates a best response, quantifying the bias induced by exploration, and establishing convergence through a joint analysis of estimation error, exploration, and equilibrium stability.

  \paragraph{Cournot games with incomplete information.}
Our analysis of dynamic Cournot competition is also related to the literature on Cournot games with incomplete information in economics and operations research. A substantial body of literature studies Cournot games where firms possess private information and face uncertain  cost and  demand   \citep{gal1985information,hauk2001secret, bonatti2017dynamic}. The primary focus of this literature is on how firms learn hidden market information and strategically signal to maximize profits.  
For example, \cite{bonatti2017dynamic} considers a new market in which each firm has private marginal cost and observes only a common revenue process, but not its competitors' quantities or costs. Using filtering techniques, they show that firms converge to the static complete-information Nash equilibrium despite learning only the average cost of their competitors.

A key distinction from our work is that these studies assume the underlying market model, such as  the inverse demand function, is known to all firms, so learning is limited to opponents' private information such as their  costs or demand shocks. In contrast, we allow both the market model and even the presence of competitors to be unknown. Consequently, firms must simultaneously learn a misspecified model of the environment and adapt their strategies, rendering filtering-based approaches inapplicable.

\paragraph{Learning in Cournot games.}

Several studies examine how firms learn and compete in repeated Cournot games with unknown demand or cost parameters using trial-and-error methods \citep{huck2004through}, neural networks \citep{barr2005cournot}, policy-gradient methods \citep{shi2020multi}, and no-regret learning \citep{nadav2010no,ba2025doubly}. In particular, reinforcement learning agents in repeated Cournot games may learn collusive behaviors \citep{kimbrough2005learning,waltman2008q}, 
a   phenomenon further studied in recent work   \citep{lin2024strategic,deshpande2026strategic}.

These studies, however, focus on repeated games with static market environments and therefore do not directly apply to the price-stickiness setting considered in this paper. With price stickiness, firms' current decisions affect future market states, transforming the problem into a stochastic game rather than a repeated game. This dynamic state dependence substantially complicates the convergence analysis of learning dynamics compared with static games.

\paragraph{Notation.}

We denote by \(\Vert A \Vert\) the spectral norm of a matrix \(A\), by \(\sigma_{\min}(A)\) its minimum singular value, by \(r(A)\) its spectral radius, and by \(A^\top\) its transpose. For vectors, \(\|\cdot\|_2\) denotes the Euclidean norm. 
The notation \(\succcurlyeq\) (resp. \(\succ\)) refers to the Loewner order: 
\(X \succcurlyeq Y\) (resp. \(X \succ Y\)) means \(X - Y\) is positive semidefinite 
(resp. positive definite). We write \(I_{d_m}\) for the \(d_m\)-dimensional identity matrix and \(0_{m \times n}\) for the \(m \times n\) zero matrix. Finally, \(O(\cdot)\) denotes standard asymptotic notation, and \(\tilde{O}(\cdot)\) suppresses logarithmic factors.

The remainder of this paper is organized as follows. 
Section \ref{sec:uncoupled_framework} formulates the infinite-horizon, nonzero-sum LQ stochastic games and derives the multi-agent $\epsilon$-greedy ILS algorithm in the radically uncoupled setting. Section \ref{section:model_complete} defines the feedback Nash equilibrium under complete information. Section \ref{sec.convergencyanalysis} states the main assumptions and establishes the convergence guarantees of the algorithm. 
Section \ref{sec:dynamic_oligopoly} applies the results to analyze learning  in  dynamic Cournot competition  
with sticky prices.  
Section \ref{subsec:pf_sketch} provides a proof sketch of the main theorem in Section \ref{sec.convergencyanalysis}. Finally, Section \ref{sec.conclusion} concludes. All technical proofs and supplementary materials are collected in the 
appendix.


\section{Problem Formulation}\label{sec:uncoupled_framework}

This section introduces a general setup for LQ stochastic games in a radically uncoupled environment and outlines an online learning procedure for such games.

\subsection{Setup of LQ Stochastic Games}\label{sec:independent_background}

Consider an $M$-player LQ stochastic game. For each player $m$, let $u_t^m \in \mathbb{R}^{d_m}$ denote its control input at time $t$, and let $x_t \in \mathbb{R}^n$ denote the common state. The state evolves according to the linear dynamics
\begin{equation}\label{state}
x_{t+1} = A_0 + A x_t + \sum\limits_{m=1}^{M} B_m u_t^m + \omega_t, 
\quad x_0 = x,
\end{equation}
where  $\{\omega_t\}_{t=0}^\infty$ is an i.i.d.\ sub-Gaussian noise sequence with mean zero 
and 
covariance matrix \( \operatorname{Cov}(\omega_t) = \Sigma_{\omega}\succ0 \), $x \in \mathbb{R}^n$ is a given initial state, and the system parameters $(A_0, A, \{B_m\}_{1 \le m \le M}, \Sigma_{\omega})$ are unknown to all players. 
Each player is unaware of the existence of other players and observes only the historical state sequence $\{x_k\}_{k=0}^{t+1}$ and their own past controls $\{u_k^m\}_{k=0}^t$ up to time $t+1$.

Player $m$ aims to minimize the following discounted costs:
\begin{equation*}
 J_m \big( u  \big)
= \mathbb{E} \left[\sum_{t=0}^{\infty} \rho^{t} Y_m\big(x_t,x_{t+1},u_m\big)\right],
\end{equation*}
with the cost function is given by
\begin{align}\label{eqn.running}
Y_m(x_t, x_{t+1}, u_t^m) = x_t^\top Q_m x_t + (x_{t+1})^\top H_m u_t^m 
+ (u_t^m)^\top R_m u_t^m + x_t^\top q_m + (u_t^m)^\top r_m,
\end{align}
where the coefficient matrices $Q_m \succcurlyeq 0_{n\times n}$, $R_m \succ 0_{d_m\times d_m}$, $H_m$, and vectors $q_m$, $r_m$ are known to player $m$ as private information but unknown to other players.

Note that we include the cross term $(x_{t+1})^\top H_m u_t^m $   in the cost  \eqref{eqn.running}  to capture the dynamic Cournot competition model in Section \ref{sec:moti}. Due to these cross terms, the cost function may not be strongly convex in the state-control pair. This differs from the standard quadratic cost $x_t^\top Q_m x_t + (u_t^m)^\top R_m u_t^m$ in the literature, where $Q_m$ and $R_m$ are typically assumed to be positive definite to ensure strong convexity. The lack of strong convexity introduces additional technical challenges, including issues in the well-posedness of the game and the analysis of learning algorithms.

Since each player is unaware of the impact of other players' actions on the state dynamics, they form a misspecified model of the environment. Specifically, player $m$ believes that the state evolves according to the perceived model
\begin{equation}\label{dynamic:statistical}
x_{t+1} = \Theta_m^\top z_t^m + \omega_t', 
\quad x_0 = x,
\end{equation}
where $z_t^m = [1,\; x_t^\top,\; (u_t^m)^\top]^\top$, 
$\{\omega_t'\}_{t=0}^\infty$ is an i.i.d.\ sub-Gaussian sequence with mean zero and 
unknown covariance matrix  $\Sigma_m$, and $\Theta_m = [A'_0,\; A',\; B'_m]^\top \in \mathbb{R}^{(1 + n + d_m) \times n}$ 
is the unknown (drift) coefficient. This model is misspecified because it omits the control inputs of all other players. Player $m$ will estimate $(\Theta_m, \Sigma_m)$ from their own observations to compute their strategies.

We focus on linear feedback strategies of the form $u_t^m = -F_m x_t + f_m + \alpha_m v_t^m$, where $F_m$ is the feedback matrix governing the response to the current state $x_t$, $f_m$ is an offset term capturing state-independent actions, $\{v_t^m\}_{t=0}^\infty$ is an i.i.d.\ sub-Gaussian noise sequence with mean zero and covariance matrix $I_{d_m}$, and $\alpha_m \ge 0$ scales its magnitude. We denote a strategy for player $m$ by the triple $(F_m, f_m, \alpha_m)$, and refer to $(F_m, f_m)$ as the deterministic component (i.e., the strategy when $\alpha_m = 0$). Player $m$ chooses $\alpha_m$ to ensure suitable  exploration, and selects  $(F_m, f_m)$   to minimize the cost objective   given current information. 

To formulate each player's optimization problem and facilitate the theoretical analysis, we restrict strategy parameters to the following bounded \textit{admissible strategy sets}: 
\begin{align*}
\mathcal{A}_m = \{F_m \in \mathbb{R}^{d_m \times n} : \|F_m\| \le \kappa_m\}, \quad
\mathcal{B}_m = \{f_m \in \mathbb{R}^{d_m} : \|f_m\|_2 \le \kappa'_m\}.
\end{align*}
 The   constants $\kappa_m, \kappa'_m > 0$ bound the allowable feedback and offset, respectively, reflecting physical or economic constraints  on  player $m$’s ability to implement control. For the Cournot model of Section \ref{sec:moti},  $\kappa_m$ and $\kappa'_m$ directly represent production limits for price‑based and active adjustments. The admissible sets $\mathcal{A}_m$ and $\mathcal{B}_m$ constitute private information for player $m$ and are unknown to other players.

Given 
a perceived parameter pair \((\Theta_m, \Sigma_m)\), the initial point $x$, and the exploration noise scale $\alpha_m$, player $m$ faces the following optimization problem
\begin{align}\label{discount_cost_m}
&\inf\limits_{(F_m, f_m) \in \mathcal{A}_m \times \mathcal{B}_m}J^{(\Theta_m,\Sigma_m)}_m \big( (F_m,f_m) ; x,\alpha_m \big), \\\nonumber
&\text{with} \quad J^{(\Theta_m,\Sigma_m)}_m \big( (F_m,f_m) ; x,\alpha_m \big)
= \mathbb{E} \left[\sum_{t=0}^{\infty} \rho^{t} Y_m\big(x_t,x_{t+1},-F_m x_t+f_m+ \alpha_m v^m_t\big)\right],
\end{align}
where for each \((F_m, f_m) \in \mathcal{A}_m \times \mathcal{B}_m\), the state \(x_t\) evolves according to the perceived dynamics \eqref{dynamic:statistical} under the parameters \((\Theta_m,\Sigma_m)\) and the control law \(u^m_t = -F_m x_t + f_m+ \alpha_m v^m_t\).

To ensure  \eqref{discount_cost_m} is finite, player $m$ restricts the parameters to the following \textit{feasible parameter set} \(\mathcal{S}_m\):
\begin{equation}\label{def:feasible_set}
\mathcal{S}_m = \left\{(\Theta_m,\Sigma_m) : -\infty< \inf_{(F_m,f_m)\in \mathcal{A}_m \times \mathcal{B}_m} J^{(\Theta_m,\Sigma_m)}_m \big( (F_m,f_m) ; x,\alpha_m \big) < \infty, \text{ for all } x \in \mathbb{R}^n, \alpha_m\ge 0 \right\}.
\end{equation}  
For any \((\Theta_m,\Sigma_m) \in \mathcal{S}_m\) and $x \in \mathbb{R}^n, \alpha_m\ge 0$, there exists at least one optimal strategy \\ \((\Phi_m(\Theta_m,\Sigma_m,x,\alpha_m),\phi_m(\Theta_m,\Sigma_m,x,\alpha_m)) \in \mathcal{A}_m \times \mathcal{B}_m\) to problem \eqref{discount_cost_m}.

Within this setting, the next section introduces an online learning procedure in which every player independently learns   the environment and applies adaptive strategies to minimize her own discounted cost.

\subsection{Radically Uncoupled Multi-agent $\epsilon-$Greedy ILS Algorithm}\label{sec.olalgorithm}

We assume that each player independently follows an $\epsilon$-greedy ILS scheme 
  over a sequence of exponentially increasing epochs. In each epoch, a player implements a certainty-equivalent control strategy by applying the optimal linear feedback policy corresponding to their current estimate of the (potentially misspecified) system, together with an exploratory noise process to ensure sufficient exploration.
  The main components of the learning process   are summarized below.  

\paragraph{Update schedule and exploration.}   
    Let $\tau^{(k)}_m$ denote the length (in time steps) of the $k$-th epoch for player $m$, e.g. $\tau^{(k)}_m= \lambda^k$ for some $\lambda>1.$
    At the end of epoch $k$, player $m$ updates her strategy, which is then held fixed throughout the subsequent epoch of length   $\tau^{(k+1)}_m$.  
Player $m$  also specifies a decaying sequence of exploration noise scales
  $\{\alpha_m^{(k)}\}_{k=0}^\infty$. 
The decaying exploration schedule follows the standard single-agent online learning framework \citep{simchowitz2020naive}, reflecting the assumption that each player behaves as if she were the sole decision-maker. 

Players may employ heterogeneous update schedules and exploration noise scales, leading to   asynchronous learning. The precise conditions on  $\tau_m^{(k)}$ and $\alpha_m^{(k)}$ (for convergence analysis) are given in  Condition~\ref{condition:tauk_exp} in Section~\ref{sec.convergencyanalysis}.

\paragraph{Regularized least‑squares (RLS) estimation and fallback.}  
    At the initial update, each player independently selects a feasible initial parameter pair based on the information available at time zero. At the $k$-th update point, player $m$ 
    estimates the parameters of the perceived model \eqref{dynamic:statistical} by solving a regularized least-squares problem  using only data collected during the most recent epoch. Specifically, 
    player $m$ considers the estimate \begin{align}\label{def.RLS}
        \hat{\Theta}^{(k)}_m 
        \in \arg\min_{y\in \mathbb{R}^{(1+n+d_m)\times n}} \left\{ 
            \beta_m \|y\|^2 + \sum\limits_{t=\bar{\tau}^{(k-1)}_m}^{\bar{\tau}^{(k)}_m-1} \| x_{t+1} - y^\top z^m_t \|^2_2 
        \right\}, 
    \end{align}
where 
 $\bar{\tau}^{(k)}_m = \sum_{i=1}^{k} \tau^{(i)}_m$ is  
 the cumulative number of time steps 
 completed by player $m$
 up to the end of the  $k$-th update, and 
$\beta_m > 0$ is a regularization constant.
    The estimate \eqref{def.RLS} admits the closed‑form expression
    \begin{align}\label{def.hatTheta}
        \hat{\Theta}^{(k)}_m 
        = \left( V_m^{(k)} \right)^{-1} \left( \sum\limits_{t=\bar{\tau}^{(k-1)}_m}^{\bar{\tau}^{(k)}_m-1} z^m_t x_{t+1}^\top \right), \quad 
        \text{with} \quad 
        V_m^{(k)} 
        = \beta_m I_{(1+n+d_m)} + \sum\limits_{t=\bar{\tau}^{(k-1)}_m}^{\bar{\tau}^{(k)}_m-1} z^m_t (z^m_t)^{\top}.
    \end{align}
    Here we restrict the estimation to data from the most recent epoch. This  reflects a natural behavioral assumption: without observing the  opponents, players update their perceived model using recent observations that best represent the current   environment.    
    Similarly, the  estimate of the perceived noise covariance matrix is
\begin{equation}\label{def.hatSigma}
        \hat{\Sigma}^{(k)}_{m}=\frac{1}{\tau^{(k)}_m} \left( \sum\limits_{t=\bar{\tau}^{(k-1)}_m}^{\bar{\tau}^{(k)}_m-1} (x_{t+1}-(\hat{\Theta}^{(k)}_m)^{\top}z^m_t) (x_{t+1}-(\hat{\Theta}^{(k)}_m)^{\top}z^m_t)^\top \right).
\end{equation}

 Player
$m$ checks whether the estimated parameter pair $(\hat{\Theta}^{(k)}_m,\hat{\Sigma}^{(k)}_{m})$ belongs to the feasible set $\mathcal{S}_m$   in \eqref{def:feasible_set} to ensure the solvability of the corresponding control problem.  If feasible, the estimate is accepted; otherwise, the player reuses the previous epoch's parameter pair: 
\begin{equation}\label{eqn.fallback}
(\tilde{\Theta}^{(k)}_m,\tilde{\Sigma}^{(k)}_{m}) 
\leftarrow 
\begin{cases} 
(\hat{\Theta}^{(k)}_m,\hat{\Sigma}^{(k)}_{m}) & \text{if } (\hat{\Theta}^{(k)}_m,\hat{\Sigma}^{(k)}_{m}) \in \mathcal{S}_m, \\[4pt]
(\tilde{\Theta}^{(k-1)}_m,\tilde{\Sigma}^{(k-1)}_{m}) & \text{otherwise}.
\end{cases}
\end{equation}

\paragraph{$\epsilon$-Greedy strategy execution.}  
    Using the   parameter estimate $(\tilde{\Theta}^{(k)}_m,\tilde{\Sigma}^{(k)}_{m})$, together with the initial state $x_{\bar{\tau}^{(k)}_m}$ and the current exploration scale $\alpha^{(k)}_m$, player $m$ solves the LQ  control problem \eqref{discount_cost_m} to obtain a minimizer 
    \begin{align}\label{eq:Ff}
    (F^{(k)}_m, f^{(k)}_m) =(\Phi_m(\tilde{\Theta}^{(k)}_m,\tilde{\Sigma}^{(k)}_{m}, x_{\bar{\tau}^{(k)}_m},\alpha^{(k)}_m), \phi_m(\tilde{\Theta}^{(k)}_m,\tilde{\Sigma}^{(k)}_{m}, x_{\bar{\tau}^{(k)}_m},\alpha^{(k)}_m) ).
    \end{align}
For the next epoch, player $m$ implements the $\epsilon$-Greedy  strategy with an additive exploratory noise:
\begin{equation}\label{perturbed_strategyk}
    u_t^{m,(k+1)} = -F^{(k)}_m x_t + f^{(k)}_m + \alpha^{(k)}_m v_t^m, 
\end{equation}
where $\{v_t^m\}_{t=0}^\infty$ is an i.i.d.\ sub-Gaussian sequence with mean zero and covariance matrix $I_{d_m}$. The noise scale \(\alpha_{m}^{(k)}\) is 
kept positive within each epoch to guarantee sufficient excitation, while gradually decaying across epochs to anneal exploration as the player gains more information about the environment.

\begin{algorithm}
\caption{Radically Uncoupled Multi-agent $\epsilon$-Greedy ILS Algorithm}
\label{algorithm.indepedentlearning.Mplayer}
\begin{algorithmic}[1]
\STATE \textbf{Require:} Epoch length $\{\tau_m^{(k)}\}_{k\ge 1}$, exploration noise scales $\{\alpha_m^{(k)}\}_{k\ge 0}$, RLS parameter $\beta_m$, feasible set $\mathcal{S}_m$, for $1\le m \le M$

\FOR{$t = 0, 1, 2, \ldots,$}
    \FOR{$m = 1, 2, \ldots, M$}
        \STATE Calculate max number of epochs up to time $t$: $k_m = \sup\{k:\bar{\tau}^{(k)}_m \le t\}$
        \IF{$t = 0$}
            \STATE Initialize: select $(\tilde{\Theta}^{(0)}_m,\tilde{\Sigma}^{(0)}_{m}) \in \mathcal{S}_m$
            \STATE Compute $F^{(0)}_m = \Phi_m(\tilde{\Theta}^{(0)}_m,\tilde{\Sigma}^{(0)}_{m},x,\alpha^{(0)}_m)$ and $f^{(0)}_m = \phi_m(\tilde{\Theta}^{(0)}_m,\tilde{\Sigma}^{(0)}_{m},x,\alpha^{(0)}_m)$
        \ELSIF{$t = \bar{\tau}^{(k_m)}_m$}
            \STATE Estimate and select $(\tilde{\Theta}^{(k_m)}_m,\tilde{\Sigma}^{(k_m)}_{m})$ via \eqref{eqn.fallback}
            \STATE Compute $F^{(k_m)}_m = \Phi_m(\tilde{\Theta}^{(k_m)}_m,\tilde{\Sigma}^{(k_m)}_{m},x_t,\alpha^{(k_m)}_m)$ and $f^{(k_m)}_m = \phi_m(\tilde{\Theta}^{(k_m)}_m,\tilde{\Sigma}^{(k_m)}_{m},x_t,\alpha^{(k_m)}_m)$
        \ENDIF
        \STATE Apply control $u_{t}^{m,(k_m+1)}$ as in \eqref{perturbed_strategyk}
    \ENDFOR
    \STATE Observe state $x_{t+1}$ from \eqref{state}
\ENDFOR
\end{algorithmic}
\end{algorithm}

Algorithm~\ref{algorithm.indepedentlearning.Mplayer} summarizes the procedure executed concurrently by all players and models learning in the radically uncoupled environment described in Section~\ref{sec:independent_background}. First, the algorithm satisfies the radically uncoupled requirement of \cite{foster2006regret}: each player's updates depend solely on her own observed state trajectory and applied controls, without access to other players' payoffs, actions, or admissible strategy sets. 
Second, the $\epsilon$-greedy ILS scheme 
naturally captures the behavior of a player who treats the environment as a single-agent online LQ control problem. 
This choice is further justified by its optimality in the single-agent (average-reward) LQ setting:  the $\epsilon$-greedy ILS scheme achieves optimal   regret  with exponentially growing epochs and exploration noise decaying as \((\tau^{(k+1)}_m)^{-1/4}\) \citep{simchowitz2020naive}.

The central question we address is whether the joint learning dynamics generated by Algorithm~\ref{algorithm.indepedentlearning.Mplayer} converge to the complete-information feedback Nash equilibrium.  To answer this question, we first characterize the feedback Nash equilibrium in the next section.

\section{Feedback Nash Equilibrium under Complete Information}\label{section:model_complete}

This section analyzes the LQ stochastic game introduced in Section \ref{sec:independent_background} under complete information, assuming that the true system dynamics \eqref{state} and all players' strategies are known to every player.

We consider   deterministic linear feedback strategies of the form \(u^m_t = -F_m x_t + f_m\) with \(F_m \in \mathcal{A}_m\) and \(f_m \in \mathcal{B}_m\). Unlike the learning setting in  Section \ref{sec:independent_background}, no exploration noise is introduced since players have complete information. Let 
$\mathcal{A} := \prod_{m=1}^M \mathcal{A}_m$ and 
$\mathcal{B} := \prod_{m=1}^M \mathcal{B}_m$ be the joint feedback gains and  offsets of all players, respectively, and let
$ F = \{F_m\}_{1\le m\le M}$ 
and 
$f = \{f_m\}_{1\le m\le M} $  be  generic elements in $  \mathcal{A}$
and 
$ \mathcal{B}$, respectively. 
We write \(\mathcal{A}_{-m} := \prod_{j\neq m} \mathcal{A}_j\) and \(\mathcal{B}_{-m} := \prod_{j\neq m} \mathcal{B}_j\), and denote by 
\((F_{-m},f_{-m})=(\{F_j\}_{j \neq m},\{f_j\}_{j \neq m})\in \mathcal{A}_{-m} \times \mathcal{B}_{-m}\)
the strategies of all players other than player \(m\).

Given other players' strategies  \((F_{-m},f_{-m})\in \mathcal{A}_{-m} \times \mathcal{B}_{-m}\) and initial state \(x \in \mathbb{R}^n\), player \(m\) considers the minimization problem 
\begin{align}\label{obj.ne}
&\inf_{(F_m,f_m)\in \mathcal{A}_m\times\mathcal{B}_m} J_m \big( (F_m,f_m) ; (F_{-m},f_{-m}),x \big), \\\nonumber
&\text{with} \quad J_m \big( (F_m,f_m) ; (F_{-m},f_{-m}),x \big)  
= \mathbb{E} \left[\sum_{t=0}^{\infty} \rho^{t} Y_m\big(x_t,x_{t+1},-F_m x_t+f_m\big)\right],
\end{align}
where $\{x_t\}_{t=0}^\infty$
follows 
the dynamic \eqref{state} 
with the strategy profile $(F_m,f_m,F_{-m},f_{-m})$:
\begin{equation}\label{state3}
x_{t+1} = \left(A_0+\sum_{j\neq m} B_j f_j\right) + \left(A-\sum_{j\neq m} B_j F_j\right) x_t + B_m(-F_m x_t+f_m)+\omega_t.
\end{equation}
Note that, under complete information, player $m$ knows the true state dynamics \eqref{state3}, in   contrast  to  the radically uncoupled setting in Section~\ref{sec:independent_background}, where player $m$ perceives the state as evolving according to the misspecified model \eqref{dynamic:statistical}.

Following the standard treatment of LQ games \citep{bacsar1998dynamic,sun2019linear}, we define a feedback Nash equilibrium as a strategy profile from which no player has an incentive to unilaterally deviate.

\begin{definition}\label{def:feedback_nash}
A policy profile \((F^*,f^*)=(\{F_m^*\}_{1\le m \le M}, \{f_m^*\}_{1\le m \le M}) \in \mathcal{A} \times \mathcal{B}\) is a feedback Nash equilibrium if for all 
$1\le m\le  M$ and $x\in \mathbb R^n$,
$ J_m \big( (F^*_m,f^*_m) ; (F^*_{-m},f^*_{-m}), x \big)$ is finite, and 
$  (F^*_m,f^*_m)=\arg\inf_{(F_m,f_m)\in \mathcal{A}_m\times\mathcal{B}_m} J_m \big( (F_m,f_m) ; (F^*_{-m},f^*_{-m}),x \big).
$ 
\end{definition}

To characterize best responses, we assume that given the   strategy profile of   other players, each player's optimization problem \eqref{obj.ne} admits a unique optimal strategy.

\begin{assumption}\label{ass.def.bestresponsemap} (Well-posedness of best responses)
    For all \(1\le m\le M\), \(x \in \mathbb{R}^n\), and \((F_{-m},f_{-m})\in \mathcal{A}_{-m} \times \mathcal{B}_{-m}\),
    the infimum in problem \eqref{obj.ne} is attained at a unique minimizer that is independent of the initial state \(x\).
    Moreover, this minimizer is continuous in \((F_{-m},f_{-m})\).
\end{assumption}

We denote the unique minimizer of problem \eqref{obj.ne} by
\((\psi_m(F_{-m},f_{-m}),\, \varphi_m(F_{-m},f_{-m}))\)
and refer to it as player \(m\)'s \textit{best response} to the strategy profile \((F_{-m},f_{-m})\) of the other players.
This defines a \textit{collective best-response map}
\(\Psi: \mathcal{A}\times\mathcal{B} \to \mathcal{A}\times\mathcal{B}\) by
\begin{equation}\label{def.bestresponsemap}
\Psi((F,f)) = \Bigl( \bigl\{ \psi_m(F_{-m},f_{-m}) \bigr\}_{1\le m\le M},\;
                      \bigl\{ \varphi_m(F_{-m},f_{-m}) \bigr\}_{1\le m\le M} \Bigr).
\end{equation}
Because each component of \(\Psi\) is continuous, \(\Psi\) itself is continuous on the compact set
\(\mathcal{A}\times\mathcal{B}\); hence, by Brouwer's fixed-point theorem, \(\Psi\) admits at least one fixed point.
It follows directly from Definition~\ref{def:feedback_nash} and the definition of \(\Psi\) that a feedback Nash equilibrium corresponds to a fixed point of this map. We state this existence and characterization   in the following lemma.

\begin{lemma}\label{lemma:characterize_ne}
    Under Assumption~\ref{ass.def.bestresponsemap}, there exists a fixed point \((F^*,f^*)\) of \(\Psi\), i.e., \(\Psi((F^*, f^*)) = (F^*, f^*)\); 
    moreover, a strategy profile is a feedback Nash equilibrium  if and only if it is a fixed point of \(\Psi\).
\end{lemma}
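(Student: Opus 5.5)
The proof has two parts: existence via Brouwer's fixed-point theorem, and the equivalence, which follows by unpacking Definition~\ref{def:feedback_nash} against the definition \eqref{def.bestresponsemap} of \(\Psi\).

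\emph{Existence.} I will view \(\mathcal{A}\times\mathcal{B}\) as a subset of the finite-dimensional space \(\prod_m \mathbb{R}^{d_m\times n}\times\prod_m\mathbb{R}^{d_m}\). Each factor \(\mathcal{A}_m=\{\|F_m\|\le\kappa_m\}\) is a closed ball for the spectral norm, and \(\mathcal{B}_m\) is a closed Euclidean ball. Both are therefore nonempty, compact and convex, and a finite product of such sets is again nonempty, compact and convex.

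By Assumption~\ref{ass.def.bestresponsemap}, for each \(m\) the map \((F_{-m},f_{-m})\mapsto(\psi_m,\varphi_m)\) is well defined, takes values in \(\mathcal{A}_m\times\mathcal{B}_m\), and is continuous. Composing with the continuous projection \((F,f)\mapsto(F_{-m},f_{-m})\) shows that each component of \(\Psi\) is continuous. Hence \(\Psi\) is a continuous self-map of a compact convex set, and Brouwer's theorem gives a fixed point \((F^*,f^*)\). To be careful, one can transport the product of balls to a standard closed ball by a homeomorphism, or cite Brouwer's theorem directly for compact convex sets.

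\emph{Equivalence, forward direction.} Suppose \((F^*,f^*)\) is a feedback Nash equilibrium. Fix \(m\) and any \(x\). Definition~\ref{def:feedback_nash} says that \((F^*_m,f^*_m)\) attains the infimum in \eqref{obj.ne} with opponents' profile \((F^*_{-m},f^*_{-m})\). By Assumption~\ref{ass.def.bestresponsemap} this minimizer is unique, so \((F^*_m,f^*_m)=(\psi_m(F^*_{-m},f^*_{-m}),\varphi_m(F^*_{-m},f^*_{-m}))\). Since this holds for every \(m\), we get \(\Psi((F^*,f^*))=(F^*,f^*)\).

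\emph{Equivalence, converse direction.} Suppose \((F^*,f^*)\) is a fixed point of \(\Psi\). Then for each \(m\), \((F^*_m,f^*_m)\) is the unique minimizer of \eqref{obj.ne} against \((F^*_{-m},f^*_{-m})\), and by Assumption~\ref{ass.def.bestresponsemap} it is a minimizer for every initial state \(x\). It remains to check finiteness of \(J_m\big((F^*_m,f^*_m);(F^*_{-m},f^*_{-m}),x\big)\), which is the only point needing care. The assumption says the infimum is attained, and I interpret attainment by a minimizer as meaning a finite optimal value. If needed, I would note that any admissible profile yields a finite-valued minimization problem as part of the well-posedness built into Assumption~\ref{ass.def.bestresponsemap}, so that the value \(J_m\) at the minimizer is finite. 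With finiteness established, both conditions of Definition~\ref{def:feedback_nash} hold, and \((F^*,f^*)\) is a feedback Nash equilibrium.
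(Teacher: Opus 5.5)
Your proposal is correct and follows the paper's argument: Brouwer's theorem applied to the continuous self-map \(\Psi\) of the compact set \(\mathcal{A}\times\mathcal{B}\), followed by a direct comparison of Definition~\ref{def:feedback_nash} with the definition of \(\Psi\), using uniqueness of best responses. Two of your remarks are sound refinements of what the paper leaves implicit. First, you note that \(\mathcal{A}\times\mathcal{B}\) is convex, which Brouwer's theorem needs. Second, you read Assumption~\ref{ass.def.bestresponsemap} as giving a finite optimal value, which is how the paper itself uses it when it later infers \((\Theta^{(F_{-m},f_{-m})}_m,\Sigma_{\omega})\in\mathcal{S}_m\); the value \(+\infty\) is in any case excluded by uniqueness, since \(\mathcal{A}_m\times\mathcal{B}_m\) is not a singleton.
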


To analyze the convergence of Algorithm \ref{algorithm.indepedentlearning.Mplayer}, we further impose the following (global) stability condition on the Nash equilibrium.

\begin{assumption}\label{ass.Psi_contractive_selfmap} (Stability of the feedback Nash equilibrium)
    There exist constants \(\mu>0\) and \(0<\zeta<1\) such that for all \((F,f)\in \mathcal{A} \times \mathcal{B}\),
    \begin{equation*}
    \big\Vert \Psi((F,f))-\Psi((F^*,f^*))\big\Vert_{\mu,1,2}
    \le \zeta \,\big\Vert (F,f)-(F^*,f^*)\big\Vert_{\mu,1,2},
    \end{equation*}
    where \((F^*,f^*)\) is a fixed point of \(\Psi\), and \(\Vert (F,f) \Vert_{\mu,1,2}\coloneqq
    \mu \bigl(\sum_{m=1}^M \Vert F_m \Vert\bigr)+\bigl(\sum_{m=1}^M\Vert f_m \Vert_2\bigr)\)
    is a weighted composite norm over \(\mathcal{A}\times\mathcal{B}\).
\end{assumption}

 Assumption  \ref{ass.Psi_contractive_selfmap}   implies  that \(\Psi\) admits a unique fixed point, and hence a unique feedback Nash equilibrium for the LQ game:   any  fixed point \((F',f')\) of $\Psi$ satisfies \(\Vert (F',f')-(F^*,f^*)\big\Vert_{\mu,1,2}=\big\Vert \Psi((F',f'))-\Psi((F^*,f^*))\big\Vert_{\mu,1,2} <\big\Vert (F',f')-(F^*,f^*)\big\Vert_{\mu,1,2}\), which forces 
$(F',f')= (F^*,f^*)$. The stability constant $\zeta$ also plays a key role in establishing the convergence of the learning dynamics and characterizing its convergence rate (see Theorem \ref{thm.convergencyne}). Without the stability condition, learning algorithms may fail to converge to the Nash equilibrium, even when it is unique (see \citet{mazumdar2020policy} for examples involving gradient-based algorithms).  We investigate two representative cases (that violate Assumption \ref{ass.Psi_contractive_selfmap}):  a unique equilibrium exists but is locally unstable (i.e., the map $\Psi$ is not a local contraction around the equilibrium; see Section~\ref{numerical.violateassumption} for a formal definition), or multiple locally  stable equilibria coexist. In the former case, Section~\ref{numerical.violateassumption} shows that Algorithm~\ref{algorithm.indepedentlearning.Mplayer} fails to converge and exhibits persistent oscillations. In the latter case,  Appendix~\ref{appendix.multiNE} 
  shows that  the algorithm may converge to one of the equilibria or exhibit oscillatory behavior.

\section{Main Convergence Result}\label{sec.convergencyanalysis}

This section shows that, under suitable conditions, 
the asynchronous $\epsilon$-greedy ILS algorithm in Algorithm \ref{algorithm.indepedentlearning.Mplayer} converges to the feedback Nash equilibrium (Definition \ref{def:feedback_nash}) and  further characterizes its convergence rate.

We shall assume the state dynamics \eqref{state} is stable under all admissible policy profiles.

\begin{assumption}\label{ass.stationary_x}
It holds that $ 
       \sup_{F\in \mathcal{A}} \Big\| A- \sum\limits_{m=1}^M B_m F_m \Big\| < 1.
$ 
\end{assumption}

Assumption \ref{ass.stationary_x} ensures that the state process $\{x_t\}_{t=0}^\infty$ in \eqref{state} remains stable under every admissible strategy profile (see \eqref{state3}). 
If this assumption is violated, numerical examples in Section \ref{numerical.violateassumption} demonstrate that the state may explode. Consequently, the ridge regression in \eqref{def.RLS} may fail to yield meaningful estimates, making the \(\epsilon \)-greedy ILS algorithm unimplementable.

We further assume the Lipschitz continuity of the optimal strategy map with respect to parameter estimates. 
To state the assumption formally, we define for each player \(m\) and \((F_{-m},f_{-m}) \in \mathcal{A}_{-m}\times \mathcal{B}_{-m}\),
\begin{equation}\label{def.Thetam.Ff}
     \Theta^{ (F_{-m},f_{-m})}_m = \left[ A_0 + \sum_{j \neq m} B_j f_j,\; A - \sum_{j \neq m} B_j F_j,\; B_m \right]^\top,
\end{equation}
which captures the aggregate effect of other players' linear strategies on the state dynamics from player \(m\)'s perspective; see \eqref{state3}.

\begin{assumption}\label{ass.uniform.Phimcontinuity} 
   
    For all   $1\le m\le  M$, there exist constants $ \delta_m, L_m \in (0, \infty)$ such that for all   $(\tilde{\Theta}_m,\tilde{\Sigma}_m)$ with $\max\bigl\{\|\tilde{\Theta}_m - \Theta^{ (F_{-m},f_{-m})}_m\|,\;\|\tilde{\Sigma}_m-\Sigma_{\omega}\|\bigr\} \le \delta_m$, we have 
    \begin{enumerate}[(a)]
        \item 
        \label{item:feasible}$(\tilde{\Theta}_m,\tilde{\Sigma}_m) \in \mathcal{S}_m$.
        \item 
        \label{item:lipschitz}
        For all initial state $x$ and $\alpha_m\in [0,  \delta_m]$, the control problem \eqref{discount_cost_m} admits a unique minimizer $(\Phi_m(\tilde{\Theta}_m,\tilde{\Sigma}_m,x, \alpha_m)$, $\phi_m(\tilde{\Theta}_m,\tilde{\Sigma}_m,x, \alpha_m))$ satisfying 
        \begin{align*}
            \big\|\Phi_m(\tilde{\Theta}_m,\tilde{\Sigma}_m,x, \alpha_m) - \Phi_m(\Theta^{ (F_{-m},f_{-m})}_m,\Sigma_{\omega},x,0)\big\| &\le L_m \Delta_m,\\
            \big\|\phi_m(\tilde{\Theta}_m,\tilde{\Sigma}_m,x, \alpha_m) - \phi_m(\Theta^{ (F_{-m},f_{-m})}_m,\Sigma_{\omega},x,0)\big\|_2 &\le L_m \Delta_m,
        \end{align*}
        where  $\Delta_m = \max\bigl\{\|\tilde{\Theta}_m - \Theta^{ (F_{-m},f_{-m})}_m\|,\;\|\tilde{\Sigma}_m-\Sigma_{\omega}\|, \alpha_m\bigr\} $.
    \end{enumerate}
\end{assumption}

Assumption~\ref{ass.def.bestresponsemap} ensures that \((\Theta^{(F_{-m},f_{-m})}_m,\Sigma_{\omega})\in\mathcal{S}_m\) for all \((F_{-m},f_{-m}) \in \mathcal{A}_{-m}\times \mathcal{B}_{-m}\). Assumption~\ref{ass.uniform.Phimcontinuity}(\ref{item:feasible}) strengthens this by requiring that parameters sufficiently close to \((\Theta^{(F_{-m},f_{-m})}_m,\Sigma_{\omega})\) remain feasible. 
Consequently, once estimation errors become sufficiently small,  \eqref{eqn.fallback} accepts the new estimates and updates the parameters.  Assumption~\ref{ass.uniform.Phimcontinuity}(\ref{item:lipschitz}) 
requires the optimal strategy map to be locally Lipschitz continuous, with a neighborhood size   \(\delta_m\) and the Lipschitz constant \(L_m\) that are independent of the opponents' strategies  \((F_{-m},f_{-m})\). 
This regularity is needed for our convergence analysis, as it controls how estimation errors propagate through players' best-response updates (see Step 2 of the proof sketch in Section~\ref{subsec:pf_sketch}).  

For standard LQ games without cross-product or linear terms (i.e., \(H_m, q_m, r_m,\) and \(A_{0}\) in \eqref{state} and \eqref{eqn.running} are zero), Assumption \ref{ass.uniform.Phimcontinuity} follows from   classical sensitivity analysis \citep{konstantinov1993perturbation}, provided that  
the unconstrained minimizer of \eqref{obj.ne} 
lies in the interior of \(\mathcal{A}_m \times \mathcal{B}_m\). These results, however, rely on positive definiteness of the quadratic objective. To the best of our knowledge, corresponding sensitivity results for the more general class of LQ games considered here remain unavailable. 
For the dynamic Cournot competition model in Section~\ref{sec:dynamic_oligopoly}, we establish explicit sufficient conditions ensuring Assumption~\ref{ass.uniform.Phimcontinuity}; see  Appendix~\ref{appendix.oligopoly.suff_condition}.

For convergence analysis, we impose  the following condition on the   epoch lengths and exploration noise scales in    Algorithm~\ref{algorithm.indepedentlearning.Mplayer}.

\begin{condition}\label{condition:tauk_exp}
For all $1\le m \le M$ and     $k \ge 0$,
the  epoch length and exploration noise used by player $m$ in epoch $k$
satisfy  
\begin{equation*}
    \tau^{(k+1)}_m = \big\lfloor \tau^{(k+1)} + \tilde{\tau}^{(k+1)}_m \big\rfloor, \quad \alpha^{(k)}_m = \bar{\alpha} (\tau^{(k+1)}_m)^{-\nu_m}, 
\end{equation*}
where 
$ \tau^{(k+1)} = \underline{\tau} + \tau \lambda^{k+1} $ is the  base epoch length  with some  $\lambda > 1$,  $\tau > 0$, and $\underline{\tau}\ge 0$,
$\tilde{\tau}^{(k+1)}_m $ is an asynchronous shift  taking values in $ [0,\bar{\tau}]$ for some $\bar{\tau}>0$, 
and 
$\bar{\alpha}>0$ and  $\nu_m \in (0, \tfrac{1}{2})$ are the base exploration rate and decay rate, respectively.

\end{condition}

Condition \ref{condition:tauk_exp} 
captures the asynchronous learning of different players using a single-agent algorithm, due to  their unawareness of competitors.
Consistent with the single-agent online LQ learning literature \citep{simchowitz2020naive},
we assume an exponential growth schedule for the base epoch length  \(\{\tau^{(k)}\}_{k=1}^\infty \), reflecting that players deploy their policies for increasingly longer durations as the learning proceeds. 
The choice  $\lambda=2$ recovers    the standard doubling trick \citep{besson2018doubling}. 
Similarly, 
the decay rate \(\nu_m \in (0,\frac12)\) also follows the standard exploration schedule in single-agent learning, ensuring persistent excitation for consistent parameter estimation while allowing exploration to vanish asymptotically.

Unlike single-agent learning, we incorporate a player- and epoch-dependent shift \(\tilde{\tau }_{m}^{(k+1)}\) into the learning epochs to accommodate asynchronicity across players when updating their strategies (recall that strategy parameters are updated only at the end of each epoch). These shifts are allowed to be random (e.g., uniformly distributed on \([0,\bar{\tau}]\)), provided they are independent of the state noise \(\{\omega_t\}_{t=0}^\infty\) and the exploration noise \(\{v_t^m\}_{t=0}^\infty\). While we assume bounded shifts for simplicity of presentation, our convergence analysis can be readily extended to settings with polynomially growing shifts.

We are now ready to present the main convergence theorem. To facilitate the discussion, we introduce a few notations. Define $k_m(t) = \sup\{k : \bar{\tau}^{(k)}_m \le t\}$. Let $(F^{(k_m(t))}_m, f^{(k_m(t))}_m)$ denote the strategy parameters used by player $m$ at time $t$. 
We write \((F^{(k(t))}, f^{(k(t))})\) to denote the strategy profile where player \(m\) employs the strategy \((F^{(k_m(t))}_m, f^{(k_m(t))}_m)\) at time \(t\).

\begin{theorem}\label{thm.convergencyne}
Suppose Assumptions \ref{ass.def.bestresponsemap}-\ref{ass.uniform.Phimcontinuity} and Condition \ref{condition:tauk_exp} hold.
For any \(\delta \in (0,1)\), there exist constants \(c_0, c_1 > 0\) and \(T(\delta) = O((\log(1/\delta))^{c_1})\) such that, with probability at least \(1-\delta\), for all \(t \ge T(\delta)\),
\begin{equation}
\label{eq:high_probability_rate}
\|(F^{(k(t))}, f^{(k(t))}) - (F^*, f^*) \|_{\mu,1,2}
= O\bigl( \max\{ R_1(t,\delta),\; R_2(t,\delta),\; R_3(t) \} \bigr),
\end{equation}
where \(\|\cdot \|_{\mu,1,2}\) is defined in Assumption \ref{ass.Psi_contractive_selfmap}, and
\begin{align}\label{def.R123}
R_1(t,\delta) = \bigl((\log t)^{\frac32}+(\log\tfrac{1}{\delta})^{c_0}\bigr)\,
                 t^{\frac{\ln\zeta}{\ln\lambda}},\quad 
R_2(t,\delta) = \sqrt{\log t+\log\tfrac{1}{\delta}}\;
                 t^{-(\frac12 - \max\limits_m  \nu_m)},\quad 
R_3(t)        = t^{-\min\limits_m \nu_m } .
\end{align}
Consequently, 
it holds almost surely that as $t\to \infty$,
\[
\|(F^{(k(t))}, f^{(k(t))}) - (F^*, f^*) \|_{\mu,1,2}
= \tilde{O}\bigl(t^{-q}\bigr),
\quad 
q \coloneqq   \min\!\Bigl\{ -\frac{\ln\zeta}{\ln\lambda},\; \frac12 - \max\limits_m \nu_m ,\; \min\limits_m  \nu_m  \Bigr\}.
\]
\end{theorem}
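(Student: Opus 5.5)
The plan is to show that each player's update is a noisy evaluation of the best-response map $\Psi$, and then to unroll the contraction from Assumption~\ref{ass.Psi_contractive_selfmap} across epochs. The first step fixes an epoch $k$ of player $m$ and looks at the opponents' strategies in force on the window $[\bar\tau^{(k-1)}_m,\bar\tau^{(k)}_m)$. Because of the bounded asynchronous shifts in Condition~\ref{condition:tauk_exp}, each opponent $j$ switches strategy at most once (or a bounded number of times) inside this window. The switch happens within $O(k\bar\tau)$ steps of its start, so an $O(k/\lambda^k)$ fraction of the window is "contaminated." On the remaining stretch, the true dynamics \eqref{state3} read
\[
x_{t+1}=\bigl(\Theta_m^{(F_{-m},f_{-m})}\bigr)^\top z_t^m+\textstyle\sum_{j\ne m}\alpha_j B_j v_t^j+\omega_t .
\]
Here $(F_{-m},f_{-m})$ are the opponents' current strategies. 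So player $m$ faces a correctly specified regression with parameter $\Theta_m^{(F_{-m},f_{-m})}$ from \eqref{def.Thetam.Ff} and effective noise $\omega_t+\sum_{j\ne m}\alpha_jB_jv_t^j$. The noise is independent of $z_t^m$, since $v_t^j$ enters only through $x_{t+1}$. Its covariance is $\Sigma_\omega+\sum_{j\ne m}\alpha_j^2B_jB_j^\top$, which differs from $\Sigma_\omega$ by $O(\max_j\alpha_j^2)$.

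The second step is the estimation error. Assumption~\ref{ass.stationary_x} gives uniform stability, so $x_t$ stays bounded in the sub-Gaussian sense uniformly over admissible policies. The exploration $\alpha_m^{(k-1)}v_t^m$ supplies persistent excitation:
\[
\sigma_{\min}(V_m^{(k)})\gtrsim \tau_m^{(k)}\bigl(\alpha_m^{(k-1)}\bigr)^2=\tau^{1-2\nu_m} ,
\]
using a block-matrix argument on $z_t^m=[1,x_t,-F_mx_t+f_m+\alpha v_t^m]$ together with $\Sigma_\omega\succ0$ for the $(1,x)$ block. A self-normalized martingale bound then gives, with probability $1-\delta_k$,
\[
\|\hat\Theta_m^{(k)}-\Theta_m^{(F_{-m},f_{-m})}\|\lesssim \sqrt{\log(1/\delta_k)}\,\tau^{-(1/2-\nu_m)} .
\]
The contamination from switches adds roughly $k\bar\tau/\tau^{1-2\nu_m}$, which is of lower order. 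The same tools give a bound on $\|\hat\Sigma^{(k)}_m-\Sigma_\omega\|$ plus the $O(\alpha^2)$ bias. I would take $\delta_k=\delta/(Mk^2)$ and apply a union bound.

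The third step turns the estimate into a strategy update. Once the estimation error falls below $\delta_m$, which happens after $T(\delta)$, Assumption~\ref{ass.uniform.Phimcontinuity} makes the estimate feasible, so no fallback occurs. We have $\Phi_m(\Theta_m^{(F_{-m},f_{-m})},\Sigma_\omega,x,0)=\psi_m(F_{-m},f_{-m})$, and this is independent of $x$ by Assumption~\ref{ass.def.bestresponsemap}. Hence
\[
\|(F_m^{(k)},f_m^{(k)})-\Psi_m(F_{-m},f_{-m})\|\le L_m\max\{\text{est.\ error},\alpha_m^{(k)}\}=:e_k ,
\]
with $e_k=O\bigl(\sqrt{\log}\,\lambda^{-k(1/2-\max\nu)}+\lambda^{-k\min\nu}\bigr)$. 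The opponents' strategies here come from their own previous epochs.

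The main obstacle is combining these per-player bounds into one contraction. I would define
\[
D_k=\sup\Bigl\{\|(F^{(k(t))},f^{(k(t))})-(F^*,f^*)\|_{\mu,1,2}\;:\;t\in[\bar\tau^{(k)},\bar\tau^{(k+1)})\Bigr\}
\]
on the base grid, and use the bounded shifts to show that every player's epoch-$k$ update responds to a profile drawn from generation $k-1$ (or possibly a mix with $k-2$). Mixing across players is harmless, because the contraction is stated against $(F^*,f^*)$ in the sum norm, componentwise. This yields $D_k\le\zeta\max(D_{k-1},D_{k-2})+Ce_k$. Unrolling gives
\[
D_k\lesssim \zeta^{k}D_0+\sum_i\zeta^{k-i}e_i\lesssim k\,\max\{\zeta^k,e_k\} .
\]
Since $t\asymp\lambda^k$ and $\zeta^k=t^{\ln\zeta/\ln\lambda}$, this produces $R_1,R_2,R_3$; the $(\log t)^{3/2}$ and $(\log(1/\delta))^{c_0}$ factors absorb $k$, the union-bound logarithms, and the burn-in before $T(\delta)$. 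For the almost sure statement, I would apply Borel–Cantelli with $\delta_j=j^{-2}$.
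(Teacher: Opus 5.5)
Your Steps 1--3 follow the paper's argument: the effective parameter $\Theta_m^{(F_{-m},f_{-m})}$, excitation $\sigma_{\min}(V_m^{(k)})\gtrsim\tau_m^{(k)}(\alpha_m^{(k-1)})^2$, the self-normalized bound, the asynchronous tail as a lower-order bias, acceptance after burn-in, and the identity $\Phi_m(\Theta_m^{(F_{-m},f_{-m})},\Sigma_\omega,x,0)=\psi_m(F_{-m},f_{-m})$. The gap is in Step 4, and it has two parts.

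\textbf{First, Assumption~\ref{ass.Psi_contractive_selfmap} is not componentwise.} It bounds the \emph{aggregate} deviation $\|\Psi(P)-\Psi((F^*,f^*))\|_{\mu,1,2}$ by $\zeta\|P-(F^*,f^*)\|_{\mu,1,2}$ for a \emph{single} input profile $P$. A calendar-time window $[\bar\tau^{(k)},\bar\tau^{(k+1)})$ contains profiles whose components come from two consecutive generations. Those components are best responses to two different profiles $P$ and $P'$. Applying the assumption to each profile separately only yields $\zeta\bigl(\|P-(F^*,f^*)\|_{\mu,1,2}+\|P'-(F^*,f^*)\|_{\mu,1,2}\bigr)$, which can be as large as $2\zeta\max\{\cdot\}$. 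That is not a contraction once $\zeta\ge\frac12$, and nothing in the assumption rules out this factor $2$.

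\textbf{Second, the unrolling is wrong even if you grant $D_k\le\zeta\max(D_{k-1},D_{k-2})+Ce_k$.} With $e_k\equiv0$, the sequence $D_k=\zeta^{\lfloor k/2\rfloor}$ satisfies this recursion with equality. So it only gives $\zeta^{k/2}=t^{\ln\zeta/(2\ln\lambda)}$, which is half the exponent in $R_1$ and leads to a smaller $q$ than the theorem states.

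\textbf{The fix (what the paper does) is to run the recursion over update generations, not calendar windows.} Let $(F^{(k)},f^{(k)})$ be the profile formed by every player's $k$-th update; it need never be jointly in force. Set $a_k=\|(F^{(k)},f^{(k)})-(F^*,f^*)\|_{\mu,1,2}$.

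Your Step~1 already shows that during player $m$'s $k$-th epoch every opponent plays its $(k-1)$-th strategy, except on $O(k)$ steps near the \emph{two} ends of the window. Charge those steps to the estimation error as a bias of order $(k^2+\log(1/\delta))/\bigl(\tau_m^{(k)}(\alpha_m^{(k-1)})^2\bigr)$, which is lower order. Do not let generation $k-2$ enter the regression target.

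Then each player's $k$-th update is a noisy best response to the same profile $(F^{(k-1)},f^{(k-1)})$. Assumption~\ref{ass.Psi_contractive_selfmap} applies to that single profile, giving $a_k\le\zeta a_{k-1}+Ce_k$. Unrolling this produces exactly $R_1,R_2,R_3$.

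Asynchrony enters only at the very end, through
\[
\|(F^{(k(t))},f^{(k(t))})-(F^*,f^*)\|_{\mu,1,2}\le\sum_m a_{k_m(t)}\le M\max_m a_{k_m(t)}, \qquad k_m(t)=\log_\lambda t+O(1).
\]
The factor $M$ sits outside the recursion and costs only a constant.
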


Theorem \ref{thm.convergencyne} shows that Algorithm \ref{algorithm.indepedentlearning.Mplayer} converges to the complete-information equilibrium at a polynomial rate, both with high probability (in finite samples)
and almost surely (asymptotically).
The precise convergence rate is characterized by the interplay between the game characteristics and the hyperparameters governing each player's learning dynamics:
\begin{enumerate}[(1)]
\item  The exponent \(-\frac{\ln\zeta}{\ln\lambda}\) of \(R_1\)
is the idealized convergence rate
that would be achieved if all players performed exact best-response oracle updates at the end of each epoch. Indeed, each best-response update contracts the error by a factor of $\zeta$ (the stability constant in Assumption~\ref{ass.Psi_contractive_selfmap}). Since the epoch lengths grow geometrically with ratio $\lambda$, the number of updates completed by time $t$ is approximately \(\ln t/\ln\lambda\). Hence, the error at  time $t$ scales as 
 \(\zeta^{\ln t/\ln\lambda}= t^{\ln\zeta/\ln\lambda}\).
 A smaller value of $\lambda$ leads to more frequent best-response updates and hence faster asymptotic convergence.

\item 
The above idealized rate is, however, degraded  in our  radically uncoupled   learning setting, where players do not observe their opponents  and perform   estimation and exploration    under  misspecified models.
Consequently, the convergence rate is constrained by the minimum of $\frac12- \max\limits_m \nu_m $ and $\min\limits_m \nu_m$, which correspond to the bottleneck decay rates of least-squares estimation errors and exploration noise, respectively. 
The radically uncoupled structure is reflected in the dependence of the convergence rate on \emph{individual} exploration schedules: each player must independently generate informative data through its own exploration, even though all players' exploration noises simultaneously affect the state dynamics.

\end{enumerate}

If all players adopt $\nu_m=\frac14$, the optimal exploration noise decay rate for single-agent LQ learning \citep{simchowitz2020naive}, then the convergence rate of Algorithm \ref{algorithm.indepedentlearning.Mplayer} exhibits a bifurcation phenomenon governed by the stability of the Nash equilibrium. When $\zeta \le \lambda^{-\frac14}$ (i.e., the    equilibrium  is sufficiently stable), the convergence rate \eqref{eq:high_probability_rate} (with fixed $\delta$)  simplifies to
\begin{align*}
\label{eq:rate14}
\begin{split}
\|(F^{(k(t))}, f^{(k(t))}) - (F^*, f^*) \|_{\mu,1,2}
= O\bigl( \sqrt{\log t} \cdot t^{-\frac14} \bigr).
\end{split}
\end{align*}
which is confirmed by 
our numerical experiments in Section \ref{sec.oligopoly_incomplete_information}.   
This rate coincides with the parameter convergence rate that enables optimal regret guarantees in single-agent LQ learning \citep{simchowitz2020naive}.
In contrast, when 
 $\zeta > \lambda^{-\frac14}$
(i.e., the equilibrium is less stable), 
the convergence rate is   dominated by the factor $t^{\frac{\ln \zeta}{\ln \lambda}}=\zeta^{\frac{\ln t}{\ln \lambda}}$,
which depends on   the effective number of strategy updates performed up to time $t$. 

\section{Application: Dynamic Cournot Competition with Sticky Prices}\label{sec:dynamic_oligopoly}

In this section, we apply our LQ game framework to a dynamic Cournot competition model with sticky prices.
The dynamic Cournot competition model introduced in Section \ref{sec:moti} is a special case of the general LQ   game     in Section \ref{sec:independent_background},
 with price serving as the state and quantities as the control actions. 
 
 Specifically,
the correspondence between the two formulations is established in the following three aspects. First, the price dynamics \eqref{dynamic.oligopoly} can be expressed in the general form \eqref{state} by taking $A_0 = (1 - s)a$, $A = s$, $B_m = -(1 - s)b$, and $\Sigma_{\omega}=\sigma_{\omega}^2$. Second, a firm's misspecified statistical model \eqref{oligopoly.statisticalmodel} coincides with \eqref{dynamic:statistical} upon setting \(\Theta_m = [(1 - s')a', s', -(1 - s')b']^{\top}, \Sigma_m=\sigma_m^2\). 
Third, for player \(m\), maximizing the total expected discounted profit is equivalent to minimizing the following discounted cost:
\begin{equation}\label{oligopoly.costfunction}
\mathbb{E}\left[\sum_{t=0}^{\infty} \rho^{t} \left( \frac12 q^m_{t} - p_{t+1} + c_m \right) q^m_{t}\right].
\end{equation}
The running cost in 
\eqref{oligopoly.costfunction} aligns with the general form in 
\eqref{eqn.running} by setting the parameters as \(Q_m=0, R_m=\frac12, H_m=-1, q_m=0, r_m=c_m\). Consequently, Algorithm~\ref{algorithm.indepedentlearning.Mplayer} is directly applicable 
to this setting.
Appendix~\ref{appendix.algoimplement} provides further implementation details, including a verifiable characterization of the feasible set \eqref{def:feasible_set} for implementing \eqref{eqn.fallback}  and a procedure for computing the greedy strategy.

In the sequel,
we first present numerical experiments for a duopoly Cournot game to confirm the theoretical   convergence results and examine how the price stickiness parameter affects market outcomes along the transition path to equilibrium. We then show that publicly releasing aggregate quantity information significantly accelerates convergence and mitigates the adverse transition effects identified above. Finally, we demonstrate that the multi-agent $\epsilon$-greedy ILS algorithm remains effective under a weakly nonlinear inverse demand function and investigate cases violating Assumptions~\ref{ass.def.bestresponsemap}-\ref{ass.uniform.Phimcontinuity} to illustrate the role of these conditions. For simplicity, we   use Gaussian noise for both market disturbances and exploration noises.

\subsection{Numerical Experiments in Duopoly Markets}\label{sec.oligopoly_incomplete_information}

Duopoly market is a common setting in the dynamic Cournot competition model with sticky prices \citep{fershtman1987dynamic,heston2024dynamic}. In this section, we investigate the performance of Algorithm~\ref{algorithm.indepedentlearning.Mplayer} under different levels of price stickiness and examine the resulting market outcomes.

We use the following   model and algorithm parameters: number of firms \(M = 2\), discount factor \(\rho = 0.8\), noise variance \(\sigma_\omega^2 = 1.00\), inverse demand parameters \(a = 60.0\) and \(b = 0.3\), cost parameters \(c_1 = 20.0\) and \(c_2 = 30.0\), initial price \(p_0 = 40\), and production capacity limits \(\kappa_m = 1.50\), \(\kappa'_m = 500\) for \(m = 1,2\). 
We examine three   price stickiness levels: \(s = 0.2\) (low stickiness, fast price adjustment), \(s = 0.5\) (moderate stickiness), and \(s = 0.8\) (high stickiness, slow price adjustment). 
For each selected stickiness parameter, we perform \(200\) independent simulations of Algorithm~\ref{algorithm.indepedentlearning.Mplayer} with hyperparameters \(\lambda=1.1\), \(\tau=300\), \(\underline{\tau}=2000\), \(\bar{\tau}=200\), \(\nu_m=0.25\), \(\bar{\alpha}=2.0\), and \(\beta_m=0.01\) for \(m=1,2\). In each simulation, every player performs \(130\) updates over a total of \(T\approx 7.2\times10^8\) time steps. At each update, the asynchronous shift of each player is sampled independently from a uniform distribution on \([0,\bar{\tau}]\). 
In all simulations, no negative prices or quantities are observed, indicating that such events occur with negligible probability.

The chosen model parameters satisfy Assumptions~\ref{ass.def.bestresponsemap}--\ref{ass.uniform.Phimcontinuity}, and the algorithm parameters satisfy Condition~\ref{condition:tauk_exp}; see Appendix~\ref{appendix.oligopoly.suff_condition} for details. 
Hence, Theorem \ref{thm.convergencyne} applies to the settings considered here. The equilibrium strategies \((F^*,f^*)\) 
and the stability coefficient \(\zeta\) for each stickiness level are reported in Table~\ref{tab:regression_results_convergence}, where we note that \(F^*_1 = F^*_2\).

\subsubsection{Convergence Rates of Learning Dynamics}

To estimate the convergence rate,  we define for each epoch \(k\) the relative error of the updated strategy parameters with respect to the equilibrium and take the maximum across the two players:
\begin{equation}\label{def.REk}
\text{RE}^{(k)}_{F} = \max_{m=1,2}\left\{\frac{|F_m^{(k)} - F_m^*|}{|F_m^*|}\right\},\qquad 
\text{RE}^{(k)}_{f} = \max_{m=1,2}\left\{\frac{|f_m^{(k)} - f_m^*|}{|f_m^*|}\right\}.
\end{equation}
We regress the relative error against the elapsed time on a log-log scale.
For each epoch $k$, we record the relative errors together with the corresponding elapsed time step (using the cumulative time of the player attaining the maximum error).
To reduce Monte Carlo variability, we average both the relative errors and the elapsed time steps over  200 independent runs before fitting the power-law model
\(\text{RE}^{(k(t))} = \theta_0 t^{-\theta_1}\) using the final 50 epochs.

The estimated decay exponent  $\theta_1$ is remarkably close to the theoretical value of 0.25 across all tested values of $s$; see Table~\ref{tab:regression_results_convergence}.
This agreement is consistent with Theorem~\ref{thm.convergencyne}.
Indeed, for every tested value of   \(s\), the estimated stability coefficient \(\zeta\) satisfies  \(\zeta<\lambda^{-1/4}=0.9765\), 
implying that the equilibrium is sufficiently stable, and hence the convergence rate is governed by the decay of the estimation error and exploration noise. Figures~\ref{figOliregF} and \ref{figOliregf} further illustrate the predicted $t^{-1/4}$ decay   for   \(s = 0.2\) and \(s = 0.8\).

\begin{table}[!htbp]
\caption{Equilibrium strategies, stability coefficients, and convergence rates with different price stickiness levels}
\centering
\begin{tabular}{lccccccccc}
\hline
Stickiness & $F^*_1 (F^*_2)$ & $f^*_1$ & $f^*_2$ & Stability & \multicolumn{2}{c}{$\text{RE}_{F}$} & \multicolumn{2}{c}{$\text{RE}_{f}$} \\
$s$ & & & & $\zeta$ & $\theta_0$ & $\theta_1$ & $\theta_0$ & $\theta_1$ \\
\hline
0.2 & -0.1160 & 16.8841 & 9.0615 & 0.5831 & 1.1209 & 0.2699 & 1.0407 & 0.2573 \\
0.5 & -0.3390 & 6.6769 & -1.4260 & 0.5625 & 0.9550 & 0.2498 & 3.8618 & 0.2740 \\
0.8 & -0.6520 & -7.0131 & -15.6836 & 0.5324 & 1.6430 & 0.2486 & 1.0045 & 0.2547 \\
\hline
\end{tabular}
\label{tab:regression_results_convergence}
\end{table}

\begin{figure}[!htb]
\centering
\begin{minipage}{0.45\textwidth}
\centering
\includegraphics[width=\textwidth]{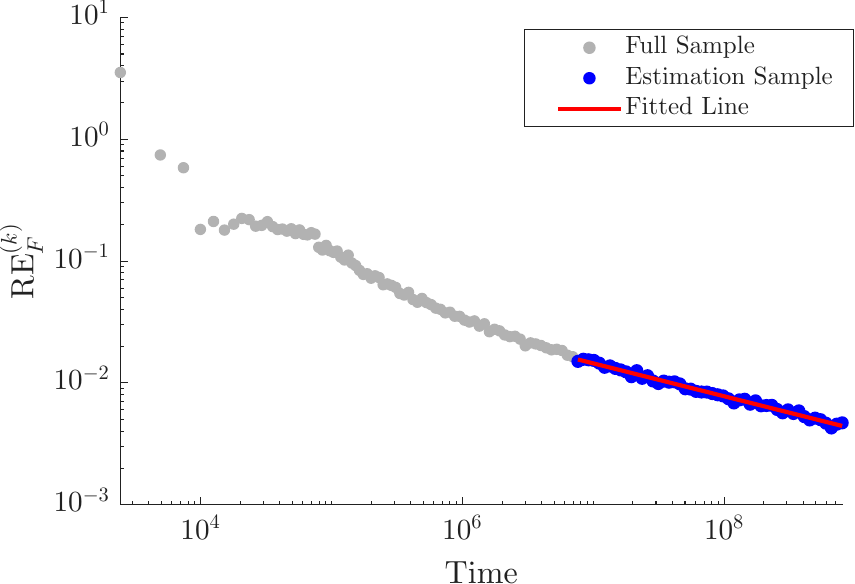}
\subcaption{$s=0.2$}
\end{minipage}
\hfill
\begin{minipage}{0.45\textwidth}
\centering
\includegraphics[width=\textwidth]{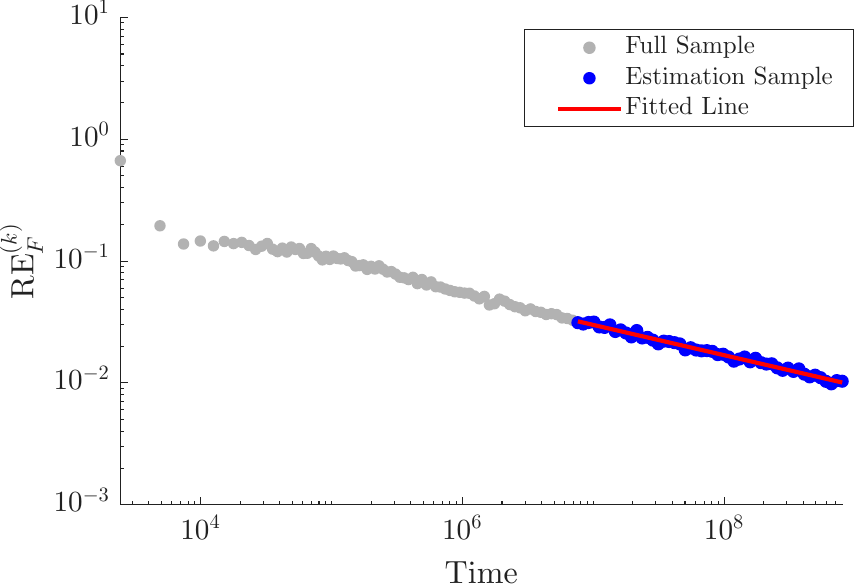}
\subcaption{$s=0.8$}
\end{minipage}
\caption{Convergence of feedback gain $F$ under different price stickiness levels}  
\label{figOliregF}
\end{figure}

\begin{figure}[!htb]
\centering
\begin{minipage}{0.45\textwidth}
\centering
\includegraphics[width=\textwidth]{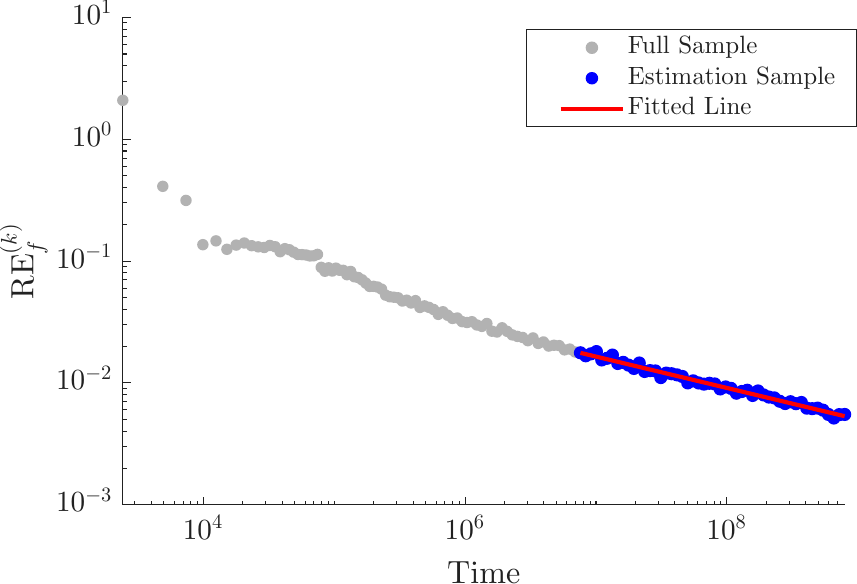}
\subcaption{$s=0.2$}
\end{minipage}
\hfill
\begin{minipage}{0.45\textwidth}
\centering
\includegraphics[width=\textwidth]{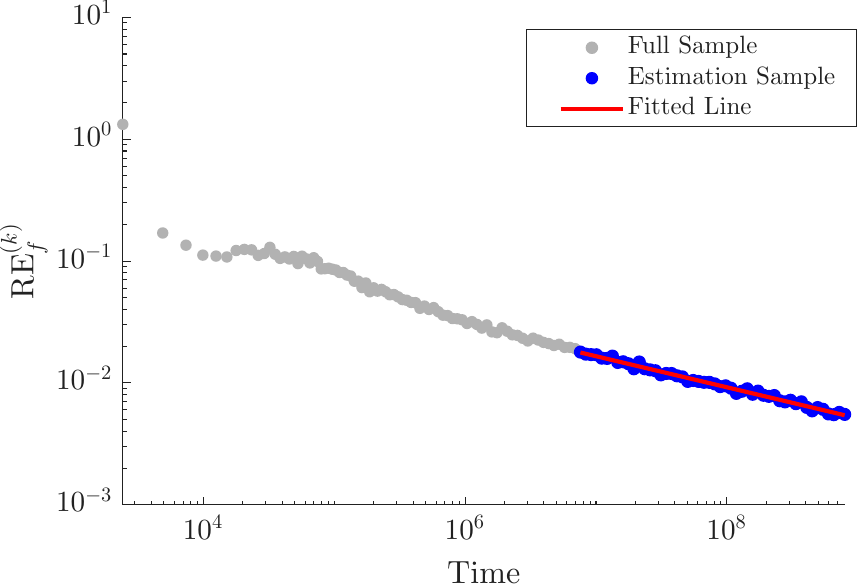}
\subcaption{$s=0.8$}
\end{minipage}
\caption{Convergence of offset $f$ under different price stickiness levels}
\label{figOliregf}
\end{figure}

\subsubsection{Market Outcomes under Low and High Price Stickiness}
\label{sec.oligopoly.market_dynamics}

We now compare the market outcomes generated by the learning dynamics under  low  (\(s=0.2\)) and   high   (\(s=0.8\)) price stickiness. We focus on  four  key performance measures: average price, firm profits, economic welfare, and market concentration.

We begin by introducing the corresponding performance measures under the complete-information Nash equilibrium $(F^*,f^*)$, 
  which serves as the benchmark for evaluating the learning dynamics.
Under the equilibrium strategy, the price process \(\{p_t\}_{t=0}^\infty\) defined by \eqref{dynamic.oligopoly} is asymptotically stationary,
allowing the long-run market performance measures to be defined in terms of its stationary distribution.
As \(t\to\infty\), the distribution of \(p_t\) converges to a Gaussian distribution with mean and variance
\begin{equation*}
\mu_p = \frac{(1 - s)(a - b\sum_{m=1}^2 f^*_m)}{1 - \big(s + (1-s)b\sum_{m=1}^2 F^*_m\big)}, \qquad
\sigma_p^2 = \frac{\sigma_\omega^2}{1 - \big(s + (1-s)b\sum_{m=1}^2 F^*_m\big)^2}.
\end{equation*}
The expected stationary profit of firm \(m\) under the equilibrium is then given by
\begin{equation}\label{eq:profit_def}
\Pi^*_m = \mathbb{E}\Bigl[\bigl(p_{t+1} - c_m - \tfrac12 q^m_t\bigr) q^m_t\Bigr],
\end{equation}
where \(q^m_t = -F^*_m p_t + f^*_m\), \(p_{t+1}\) follows \eqref{dynamic.oligopoly}, and \(p_t \sim \mathcal{N}(\mu_p, \sigma_p^2)\).
To measure overall social welfare, including both producers and consumers, we use total surplus (TS), a standard welfare measure in Cournot games \citep{cellini2007differential,bonatti2017dynamic}:
\begin{equation*}
\mathrm{TS}^* = \sum_{m=1}^2 \Pi^*_m + \mathbb{E}\Bigl[\tfrac12(a - p_{t+1}) \sum_{m=1}^2 q^m_t\Bigr],
\end{equation*}
where \(\Pi^*_m\) is given by \eqref{eq:profit_def},
and $q^m, p_{t+1}$ are defined as in \eqref{eq:profit_def}. 
Finally, to measure market concentration, we use the Herfindahl–Hirschman Index (HHI), the standard measure in Cournot games \citep{bonatti2017dynamic}:
\begin{equation*}
\mathrm{HHI}^* = \mathbb{E}\left[\sum_{m=1}^2 \left(\frac{q^m_t}{\sum_{m=1}^2 q^m_t}\right)^{\!2}\right],
\end{equation*}
with    $q^m$   defined as in \eqref{eq:profit_def}. 
The HHI captures how market output is distributed across firms, with higher values indicating greater concentration.
Table \ref{tab:equilibrium_indicators} reports 
the   values of \(\mu_p\), \(\sigma_p^2\), \(\Pi^*_1\), \(\Pi^*_2\), \(\mathrm{TS}^*\), and \(\mathrm{HHI}^*\) with different price stickiness. 

\begin{table}[!htbp]
\centering
\caption{Equilibrium values of market indicators}
\begin{tabular}{lccccccc}
\hline
\(s\) & \(\mu_p\) & \(\sigma_p^2\) & \(\Pi^*_1\) & \(\Pi^*_2\) & \(\mathrm{TS}^*\) & \(\mathrm{HHI}^*\) \\
\hline
0.2 & 48.82 & 1.0213 & 395.62 & 168.71 & 772.63 & 0.5220 \\
0.8 & 48.02 & 2.0874 & 386.24 & 160.08 & 784.42 & 0.5237 \\
\hline
\end{tabular}
\label{tab:equilibrium_indicators}
\end{table}

We now investigate the evolution of the above performance measures under the learning dynamics. 
For each of the 200 independent simulations of Algorithm~\ref{algorithm.indepedentlearning.Mplayer}, we compute at every epoch the sample mean of each performance indicator \(W\) (average price, firm profits, TS, HHI) from 300 time points sampled uniformly throughout the epoch, representing and approximating the true epoch‑average performance despite varying epoch lengths. 
Let \(W^{(k)}\) denote this sample mean for epoch \(k\). The deviation from the equilibrium benchmark is \(\Delta W^{(k)} = W^{(k)} - W^*\). We examine the bias \(\mathbb{E}[\Delta W^{(k)}]\) to capture systematic over‑/underestimation, where the expectation is taken over the 200 independent runs. 
To improve plot readability, all results reported below exclude the initial fifteen epochs.

Figure~\ref{fig:bias_price} shows that the bias of the epoch‑average price, \(\mathbb{E}\Delta\mu_p^{(k)}\), is negative for \(s=0.2\) (the price is systematically lower than the equilibrium level) but oscillates around zero for \(s=0.8\). 
\begin{figure}[!htbp]
\centering
\begin{minipage}{0.45\textwidth}
\centering
\includegraphics[width=\textwidth]{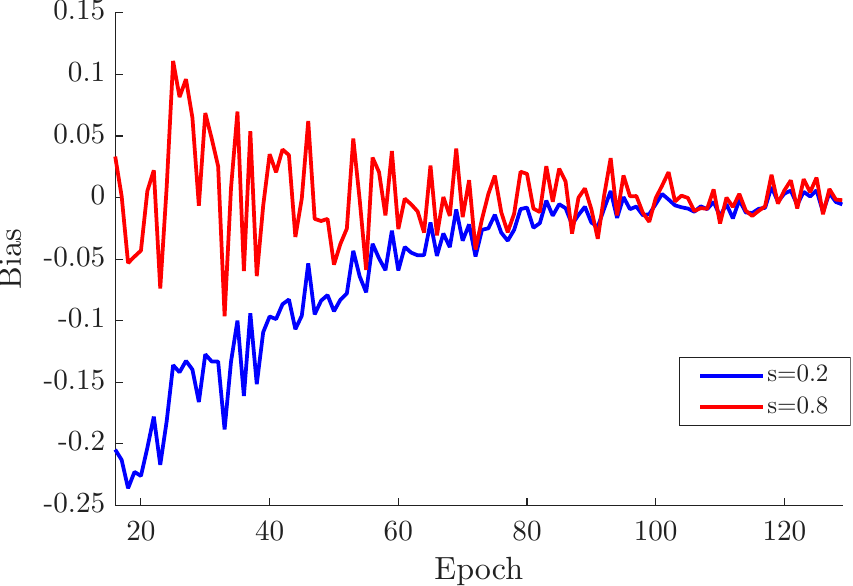}
\caption{Bias in average price}
\label{fig:bias_price}
\end{minipage}
\end{figure}
Turning to the firm level, the profits of the two firms exhibit systematic negative biases under both stickiness levels (Figure~\ref{fig_bias_profit}). 
The extent of these biases depends on the model parameters. Under our parameter choices, for firm~1, which has the lower marginal cost, the negative bias is substantially larger under \(s=0.8\) than under \(s=0.2\). For firm~2, the magnitude of the negative bias is comparable across the two stickiness levels, but the bias exhibits greater fluctuation across epochs under \(s=0.8\).

\begin{figure}[!htb]
\centering
\begin{minipage}{0.45\textwidth}
\centering
\includegraphics[width=\textwidth]{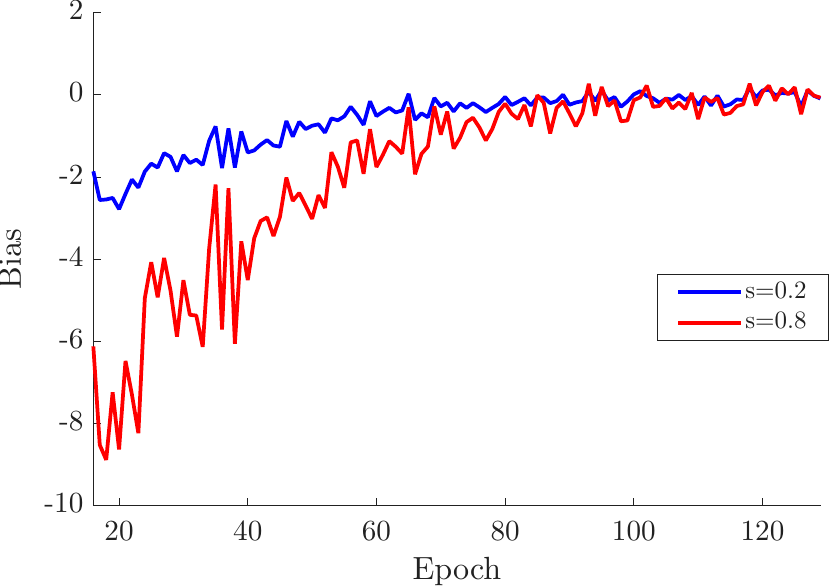}
\subcaption{Firm 1}
\end{minipage}
\hfill
\begin{minipage}{0.45\textwidth}
\centering
\includegraphics[width=\textwidth]{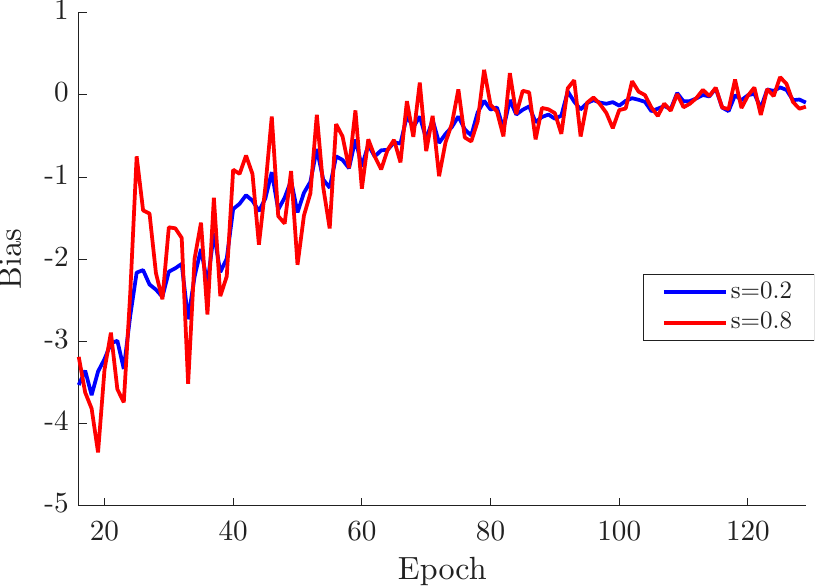}
\subcaption{Firm 2}
\end{minipage}
\caption{Bias in firm profits}
\label{fig_bias_profit}
\end{figure}

At the society level, the bias of total surplus (TS) is shown in Figure~\ref{fig:bias_TS}. TS exhibits a slight positive bias for \(s=0.2\), while for \(s=0.8\) it remains consistently below its equilibrium level (negative bias). 
\begin{figure}[!htb]
\centering
\begin{minipage}{0.45\textwidth}
\centering
\includegraphics[width=\textwidth]{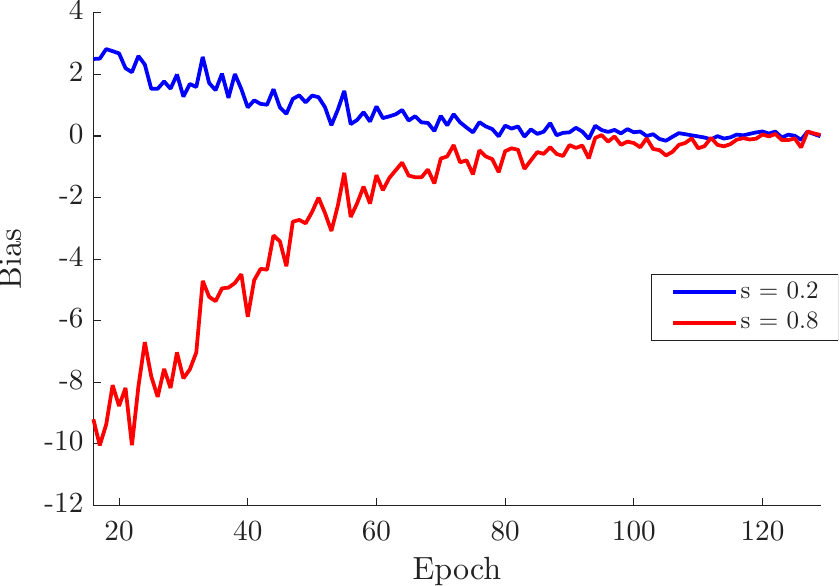}
\caption{Bias in total surplus}
\label{fig:bias_TS}
\end{minipage}
\hfill
\begin{minipage}{0.45\textwidth}
\centering
\includegraphics[width=\textwidth]{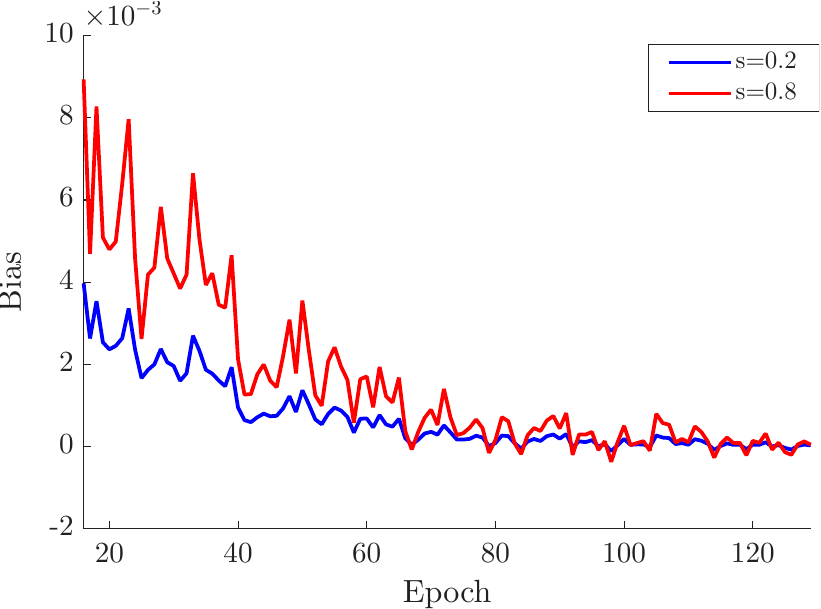}
\caption{Bias in HHI}
\label{fig_hhi_bias}
\end{minipage}
\end{figure}
Regarding market concentration, during the transition from an emerging to a mature market, the HHI remains above its equilibrium level and gradually declines toward it (Figure~\ref{fig_hhi_bias}). This over-concentration phenomenon is consistent with observations in the literature on Cournot games with incomplete information (see \cite{bonatti2017dynamic}). The extent of over-concentration is more pronounced under higher stickiness (\(s=0.8\)).

Taken together, these results reveal both common features and important differences between the two price-stickiness regimes. From the perspective of individual firms, profits remain below their equilibrium benchmarks throughout the transition under both regimes. At the societal level, however, total surplus remains persistently below its equilibrium benchmark only under high price stickiness, while market concentration is elevated in both regimes and is substantially higher when price stickiness is high. These findings suggest that radically uncoupled learning is particularly costly in highly sticky markets, reducing both firm profitability and overall social welfare during the transition to equilibrium. 
Moreover, these qualitative patterns are robust across a range of model and algorithm parameters we test. In particular, we obtain similar results for markets with more than two firms and when players employ heterogeneous exploration noise decay rates.

\subsection{Accelerating Convergence Through Aggregate Information}

The findings of the previous section suggest that under high price stickiness, both firms and a market regulator have strong incentives to accelerate convergence to the equilibrium. One natural intervention is for the regulator to publicly release  the aggregate market quantity \(Q_t = \sum_{m=1}^M q_t^m\)  in real time. 
With this additional information, the information structure no longer falls within the radically uncoupled setting studied earlier: each firm can now observe the aggregate outputs of its competitors  \(Q_t^{-m} := Q_t - q_t^m\),
and   thus explicitly account for their presence. 
This additional information enables firms to better infer the underlying market environment and competitors' behavior, leading them to adopt more informed best-response strategies. 
We therefore introduce a modified version of Algorithm \ref{algorithm.indepedentlearning.Mplayer} that exploits the publicly available aggregate market output. Although the algorithm is still executed independently by each firm, it is no longer radically uncoupled because it leverages the aggregate market quantity.
As we show below, this additional information substantially accelerates convergence to the equilibrium. 

 Firms continue to follow the asynchronous update schedules and exploration noise scales specified in Algorithm~\ref{algorithm.indepedentlearning.Mplayer}. The main modification concerns the system estimation step in Section~\ref{sec.olalgorithm}:  instead of performing a single RLS regression, each firm now conducts two separate RLS regressions using the additional information on aggregate market output. The first regression directly estimates the true price dynamics   \eqref{dynamic.oligopoly} based on the observed aggregate output. The second regression estimates the aggregate policy profile of the competitors, allowing the firm to anticipate their strategic behavior and formulate a more accurate best-response strategy. Specifically,  taking firm \(m\) as the representative firm,  it assumes that the aggregate quantity of the other firms is linear in price:
\[
Q_t^{-m} = -F'_{-m} p_t + f'_{-m} + v_t^{-m},
\]
where \(F'_{-m}\) and \(f'_{-m}\) are the aggregated feedback coefficient and offset term of all other firms, and \(v_t^{-m}\) is a residual term with perceived variance \(\sigma_{v,-m}^2\). At the \(k\)-th update time \(\bar{\tau}^{(k)}_m\), firm \(m\) uses the data collected during the \(k\)-th epoch to perform the two regressions, obtaining estimates of the price dynamics parameters \((\hat{s}^{(k)},\hat{a}^{(k)},\hat{b}^{(k)},\hat{\sigma}^{2,(k)}_\omega)\) and the prediction of other firms' aggregate behavior \((\hat{F}^{(k)}_{-m},\hat{f}^{(k)}_{-m},\hat{\sigma}^{2,(k)}_{v,-m})\). 
Based on these estimates, firm \(m\) then forms the following belief about the (controlled) price dynamics:
\begin{align}
\label{aggregate.dynamic.p}
 p_{t+1} &= \hat{s}^{(k)} p_t + (1-\hat{s}^{(k)}) \left( \hat{a}^{(k)} - \hat{b}^{(k)} \bigl(q^m_t - \hat{F}^{(k)}_{-m} p_t + \hat{f}^{(k)}_{-m} + v_t^{-m,(k)}\bigr) \right) + \omega^{(k)}_t, 
\end{align}
where 
 \(Q_t^{-m} \coloneqq  -\hat{F}_{-m}^{(k)} p_t + \hat{f}_{-m}^{(k)} + v_t^{-m,(k)}\) represents the estimated aggregate behavior of other firms, 
and 
\(\{\omega^{(k)}_t\}_{t=1}^\infty\) and \(\{v_t^{-m,(k)}\}_{t=1}^\infty\) are   i.i.d. zero-mean sub-Gaussian sequences with variances \(\hat{\sigma}^{2,(k)}_\omega\) and \(\hat{\sigma}^{2,(k)}_{v,-m}\), respectively. 
Note that  \eqref{aggregate.dynamic.p} can be rewritten in the form of \eqref{dynamic:statistical} with \(\Theta_m=\hat{\Theta}^{(k),c}_m\) and \(\Sigma_m=\hat{\Sigma}^{(k),c}_m\), where
\begin{align*}
    \hat{\Theta}^{(k),c}_m&=[(1-\hat{s}^{(k)})(\hat{a}^{(k)}-\hat{b}^{(k)}\hat{f}^{(k)}_{-m}),\ \hat{s}^{(k)}+(1-\hat{s}^{(k)})\hat{b}^{(k)}\hat{F}^{(k)}_{-m},\ -(1-\hat{s}^{(k)})\hat{b}^{(k)}]^{\top}, \\
    \hat{\Sigma}^{(k),c}_m &= \hat{\sigma}^{2,(k)}_\omega + ((1-\hat{s}^{(k)})\hat{b}^{(k)})^2 \hat{\sigma}^{2,(k)}_{v,-m}.
\end{align*}
Firm $m$ then applies the same fallback rule as in \eqref{eqn.fallback} to maintain feasibility of \((\hat{\Theta}^{(k),c}_m,\hat{\Sigma}^{(k),c}_m)\),   computes the greedy strategy using the accepted parameters, 
and   executes the resulting strategy with additive exploration noise in the next epoch, as in Algorithm \ref{algorithm.indepedentlearning.Mplayer}.

We test this modified  algorithm using  the same model and algorithm settings  as in Section~\ref{sec.oligopoly_incomplete_information}, 
focusing on the high‑stickiness case \(s = 0.8\). We perform 200 independent runs, each consisting of 100 updates per player, corresponding to a total of \(T \approx 4.2\times 10^7\) time steps. 
The   results 
show that the modified algorithm converges to equilibrium approximately six times faster, measured by the number of time steps required for the relative errors \(\text{RE}^{(k)}_{F}\) and \(\text{RE}^{(k)}_{f}\)    in \eqref{def.REk}  to fall below  0.01. 
Moreover,  Figures~\ref{fig:bias_profit_new} and \ref{fig:bias_TS_new} show that 
during the transition phase the negative biases of both firms’ profits and total surplus are substantially smaller (cf.~Figures \ref{fig_bias_profit} and \ref{fig:bias_TS}). 
These findings suggest that publicly revealing the aggregate quantity accelerates market maturation by improving the efficiency of the learning dynamics and bringing them closer to the equilibrium.

Intuitively, the acceleration from publicly releasing the aggregate quantity $Q_t$ arises from the improved information structure. In the radically uncoupled setting, each firm can only leverage its own exploration noise to improve estimation, whereas observing $Q_t$ allows firms to exploit the aggregate exploration signals of all market participants. Consequently, the regression estimation benefits from richer, more informative data. This improves the accuracy of parameter estimation and accelerates convergence to equilibrium.

\begin{figure}[!htbp]
\centering
\begin{minipage}{0.45\textwidth}
\centering
\includegraphics[width=\textwidth]{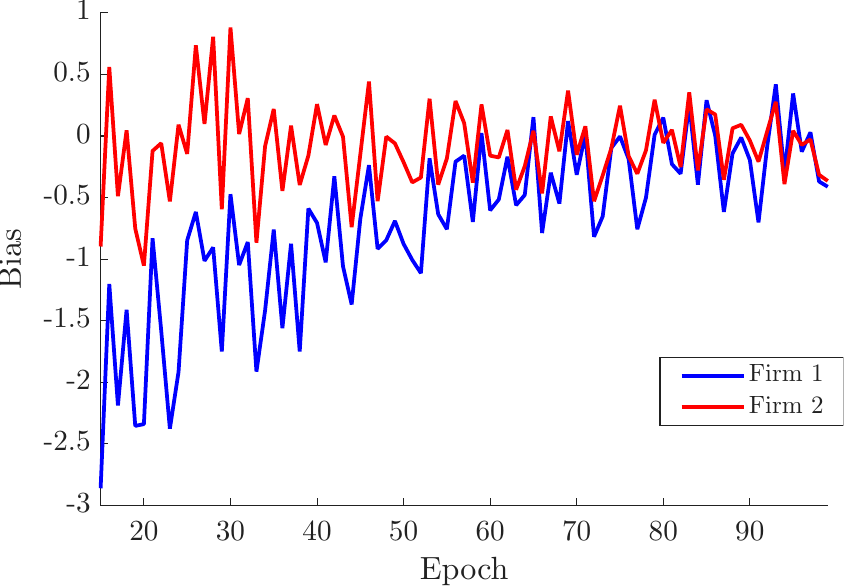}
\caption{Bias in firm profits}
\label{fig:bias_profit_new}
\end{minipage}
\hfill
\begin{minipage}{0.45\textwidth}
\centering
\includegraphics[width=\textwidth]{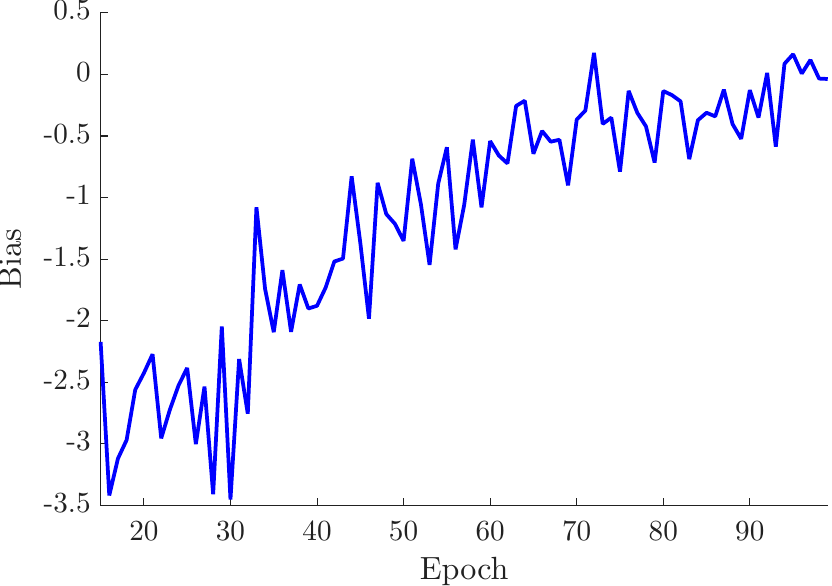}
\caption{Bias in total surplus}
\label{fig:bias_TS_new}
\end{minipage}
\end{figure}

\subsection{Robustness  with   Nonlinear Demand Functions}

This section illustrates the robustness of  Algorithm~\ref{algorithm.indepedentlearning.Mplayer}   when the true inverse demand function is weakly nonlinear. We consider the   dynamic Cournot competition model as in Section~\ref{sec:moti}, but replace the linear inverse demand with the   nonlinear function $a - b \bigl(\sum_{m=1}^M q^m_t\bigr)^{\delta}$, where $\delta>0$ is a curvature parameter \citep{bulow1983note}. 
The true price dynamics become
\begin{equation}\label{dym.weaknonlinear}
p_{t+1} = s p_t + (1 - s) \left( a - b \Bigl(\sum_{m=1}^M q^m_t\Bigr)^{\delta} \right) + \omega_t, 
\end{equation}
which  reduces to the linear case studied in Section~\ref{sec:moti} when  $\delta=1$. We now focus on the nonlinear case with $\delta \neq 1$. Each firm $m$ continues to maximize its discounted expected profit given by \eqref{oligopoly.profitfunction}. 
The complete-information Nash equilibria are nonlinear functions of price $p$, and are computed numerically by solving   coupled Bellman equations. 
In the radically uncoupled learning setting, each firm continues to  employ linear feedback strategies and adopt the misspecified linear   model \eqref{oligopoly.statisticalmodel}, since the true nonlinear form of the inverse demand function is unknown to the firm. Consequently, the firms' behaviors follow Algorithm~\ref{algorithm.indepedentlearning.Mplayer}, except that the price evolves according to the true nonlinear dynamics \eqref{dym.weaknonlinear}.

We set \(s=0.8\) and consider two nonlinearity parameters \(\delta=0.9\) and \(\delta=1.1\), with all other parameters fixed as in Section~\ref{sec.oligopoly_incomplete_information}. For each configuration, we first compute the feedback equilibrium numerically and conduct \(50\) independent simulations. In each simulation, we compare the trajectory generated by Algorithm~\ref{algorithm.indepedentlearning.Mplayer} with a benchmark trajectory in which all firms follow the equilibrium strategy throughout. Both trajectories are driven by the same sequence of price shocks. Denote by   \(q_t^m\) the      quantity produced by  firm $m$  under Algorithm~\ref{algorithm.indepedentlearning.Mplayer} at time $t$, and by   \(\bar q_t^{\,m} \) the corresponding equilibrium quantity.   For each epoch \(k\), we uniformly sample \(300\) time points \(\mathcal{T}_k\), measure the trajectory error by
\[
\mathrm{Err}^{(k)}
=
\max_{m=1,2}\frac{1}{300}\sum_{t\in\mathcal{T}_k}
\bigl(q^m_t-\bar{q}^{\,m}_t\bigr)^2,
\]
and report the average the   error  over the \(50\) independent simulations.

 Figure~\ref{fig:nonlinear_l2_error} shows the    error in production quantities 
  for \(\delta=0.9\) and \(\delta=1.1\).  In both cases, the error gradually decreases and eventually stabilizes at a positive level. The   error in the final epoch is \(0.1422\) for \(\delta=0.9\) and \(0.0484\) for \(\delta=1.1\), which are significantly smaller than the typical quantity levels observed during the simulations (in the range \(10\)--\(30\), depending on the firm and price level). The remaining error does not vanish because the true equilibrium strategy is nonlinear, while Algorithm~\ref{algorithm.indepedentlearning.Mplayer} restricts the learned strategy to be linear. Nevertheless, the approximation error is small relative to the scale of production decisions. These results show that, even under weak nonlinearity, the radically uncoupled learning algorithm converges to an accurate linear approximation of the complete-information equilibrium.

\begin{figure}[!htbp]
\centering
\begin{minipage}{0.45\textwidth}
\centering
\includegraphics[width=\textwidth]{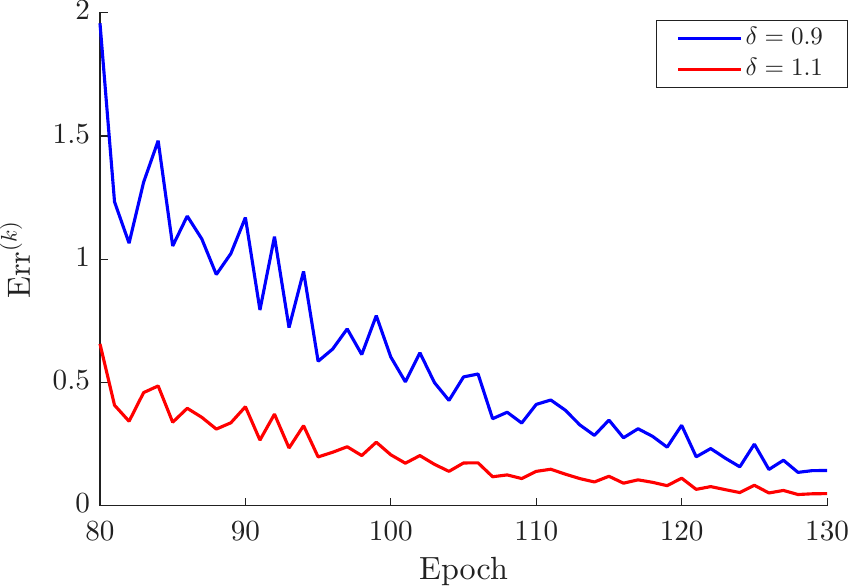}
\caption{Errors in production quantities for different $\delta$.}
\label{fig:nonlinear_l2_error}
\end{minipage}
\end{figure}

\subsection{Results Beyond Theoretical Assumptions}\label{numerical.violateassumption}

This section presents numerical experiments that illustrate the importance of the assumptions underpinning our theoretical analysis.  

Assumption~\ref{ass.stationary_x} concerns the stability of the state process \(\{x_t\}_{t=0}^\infty\) under  all  admissible strategy profiles; it affects the consistency of the RLS estimator and can be checked directly. The other three assumptions (Assumptions~\ref{ass.def.bestresponsemap}, \ref{ass.Psi_contractive_selfmap}, and \ref{ass.uniform.Phimcontinuity}) are about the game structure and are closely related. We therefore treat them as a group. They can be verified via sufficient conditions expressed solely in terms of the model parameters (see conditions \ref{condition.oligopoly.selfmap}-\ref{condition.oligopoly.contractivity} in Appendix~\ref{appendix.oligopoly.suff_condition}) for the Cournot model). Alternatively, we can detect violations by examining the number of equilibria and their local behavior. 
When an equilibrium \((F^*,f^*)\) exists, we call it \textit{locally stable} if 
 the collective best‑response map \(\Psi\) is well‑defined in a neighborhood of \((F^*,f^*)\) (i.e., problem \eqref{obj.ne} admits a unique minimizer for all \(1\le m\le M\), \(x \in \mathbb{R}^n\), and \((F_{-m},f_{-m})\) sufficiently close to \((F^*_{-m},f^*_{-m})\)) and the spectral radius of its Jacobian matrix at \((F^*,f^*)\) satisfies \(r(D\Psi(F^*,f^*)) < 1\); otherwise it is locally unstable. 
 Note that 
local stability is a weaker condition than   Assumption \ref{ass.Psi_contractive_selfmap}.
Thus, if the spectral radius of    
  $D\Psi$ at the equilibrium exceeds $1$,  then Assumption~\ref{ass.Psi_contractive_selfmap} must be violated.  
In addition, the existence of multiple equilibria also violates    Assumption~\ref{ass.Psi_contractive_selfmap}; we examine the behavior of Algorithm~\ref{algorithm.indepedentlearning.Mplayer} in this case in Appendix~\ref{appendix.multiNE}, where the algorithm may converge to one of the equilibria or exhibit oscillatory behavior.

Here we consider two parameter configurations under the Cournot setting. The common parameters across both groups are
$ 
a = 40.0$, $  b = 2.0$, $  s = 0.2$, and $\sigma_\omega^2 = 1.00$. 

\noindent\textbf{Group 1 (violation of state stability).} 
This group violates Assumption~\ref{ass.stationary_x} while satisfying the remaining assumptions, in particular having a globally stable equilibrium. The specific parameters are:
\begin{equation*}
\rho = 0.4,\; M = 2,\; c_1 = 10.0,\; c_2 = 20.0,\; \kappa_m = 0.7,\; \kappa'_m = 5000\;(m=1,2).
\end{equation*}

\noindent\textbf{Group 2 (violation of equilibrium stability).
} 
This group satisfies Assumption~\ref{ass.stationary_x} but violates Assumption~\ref{ass.Psi_contractive_selfmap}. 
The specific parameters 
(for a Cournot game with four players) are:
\begin{equation*}
\rho = 0.8,\; M = 4,\; c_1 = c_3 = 10.0,\; c_2 = c_4 = 20.0,\; \kappa_m = 0.12,\; \kappa'_m = 500\;(1\le m\le 4).
\end{equation*}
 Our numerical analysis suggests a unique equilibrium for this group. Computing the spectral radius of the Jacobian matrix at this equilibrium yields \(r(D\Psi) = 1.1160 > 1\), so the equilibrium is not locally stable and Assumption~\ref{ass.Psi_contractive_selfmap} is violated. 

For each group, we execute 200 independent runs of Algorithm~\ref{algorithm.indepedentlearning.Mplayer}. Each player uses the same algorithm parameters as in Section~\ref{sec.oligopoly_incomplete_information}, and the total number of time steps is set identically. We classify the outcome of each simulation into three categories:
\begin{itemize}
    \item \textit{Explode}: the state variable price \(p_t\) grows without bound, causing the algorithm to fail.
    \item \textit{Oscillatory}: the strategy parameters continue to fluctuate and do not settle near any fixed point. 
    \item \textit{Converge to Equilibrium}: the strategy parameters approach (a small neighborhood of) the feedback Nash equilibrium. 
\end{itemize}

Table~\ref{tab:experimental_outcomes} summarizes the percentage of simulations falling into each category for the two groups.

\begin{table}[!htbp]
\caption{Distribution of learning outcomes}
\centering
\begin{tabular}{lccc}
\hline
Parameter Group & Explode (\%) & Oscillatory (\%) & Converge to Equilibrium (\%) \\
\hline
Group 1  & 52.00 & 0.00 & 48.00  \\
Group 2  & 0.00 & 100.00 & 0.00  \\
\hline
\end{tabular}
\label{tab:experimental_outcomes}
\end{table}

The results reveal the following patterns. 
A violation of Assumption~\ref{ass.stationary_x} means that some admissible strategy profiles render the state process unstable. When the learning algorithm encounters such a profile, the state explodes; indeed, a substantial fraction of simulations for Group~1 (52\%) exhibit explosive behavior. However, there also exist admissible strategy profiles that are stabilizing. In the remaining 48\% of simulations, the particular realized sequence of strategy profiles \(\{(F^{(k)},f^{(k)})\}_{k=0}^\infty\) does not cause the state process to become unstable, and the algorithm runs without explosion even though 
Assumption~\ref{ass.stationary_x} fails. In these non‑explosive runs, since the other assumptions 
are satisfied, all paths converge to the feedback Nash equilibrium.

For Group~2, Assumption~\ref{ass.stationary_x} holds, so no explosion occurs. However, because the equilibrium is not locally stable, none of the simulation paths converge to it. Instead, all trajectories exhibit a persistent \emph{best-response cycle}: the strategy profile alternates between two distinct non-equilibrium points, each constituting the collective best response to the other. Similar cyclic behavior has been observed in evolutionary games with logit dynamics \citep{hommes2012multiple} and policy gradient  methods for LQ games \citep{mazumdar2020policy}. Figure~\ref{figviolate_oscillatory} shows a representative trajectory in the later epochs, illustrating that the strategy parameters continue to cycle without settling. This demonstrates that a unique Nash equilibrium alone is insufficient to guarantee convergence; local stability is also essential.

\begin{figure}[!htb]
\centering
\includegraphics[width=1.0\textwidth]{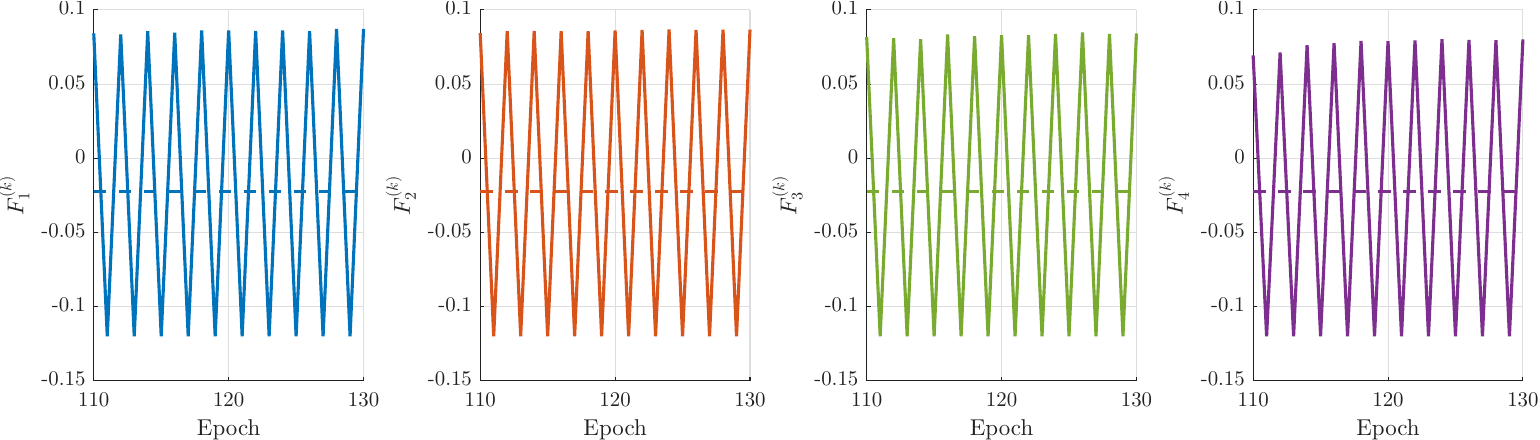}
\caption{Oscillatory dynamics when  Assumption~\ref{ass.Psi_contractive_selfmap} 
is violated (Group~2). The four subfigures correspond to four players. In each subfigure, the solid line is \(F^{(k)}_m\) and the dashed line is the equilibrium value \(F^*_m\).}
\label{figviolate_oscillatory}
\end{figure}

\section{Sketched Proof of Theorem \ref{thm.convergencyne}}\label{subsec:pf_sketch}

This section outlines the main steps in proving Theorem \ref{thm.convergencyne}. 
We first present the main ideas in the synchronous setting (Steps 1–3) and then discuss the additional technical challenges involved in extending the analysis to asynchronous updates (Step 4). Throughout this section, \(C\) denotes a generic finite constant independent of the epoch index \(k\) and the probability bound \(\delta\); its value may change from line to line.

In the synchronous case, all players share the same update times: \(  {\tau}^{(k)}_m = {\tau}^{(k)}\) for every \(m\). Let \((F^{(k)},f^{(k)})\) denote the strategy profile after the \(k\)-th update. From player \(m\)'s perspective, during epoch \(k\) the true state dynamics \eqref{state} induced by     all players' strategies become
\begin{equation*}\label{eq:effective_dyn}
x_{t+1} = (\Theta^{ (F^{(k-1)}_{-m},f^{(k-1)}_{-m})}_m)^{\top} z^m_t + \tilde\omega^m_t, 
\end{equation*}
where \(\Theta^{ (F^{(k-1)}_{-m},f^{(k-1)}_{-m})}_m\) is defined in \eqref{def.Thetam.Ff} with \((F_{-m},f_{-m})=(F^{(k-1)}_{-m},f^{(k-1)}_{-m})\), and $\{\tilde\omega^m_t\}_{t=0}^\infty$ 
is an i.i.d.\ sub-Gaussian sequence with mean zero and covariance matrix 
$\Sigma_{\omega} + \sum\limits_{j\neq m} (\alpha_j^{(k-1)})^2 B_j B_j^{\top}$. 
Crucially, 
in contrast to the single-player case \citep{simchowitz2020naive}, 
player \(m\) 
faces  a  misspecified   model \eqref{dynamic:statistical}, and hence  
cannot identify the true system parameters \((A_0,A,\{B_m\},\Sigma_{\omega})\). Instead, they will asymptotically learn the \textit{effective   system parameters} at the $k$-th epoch given by
\begin{equation}\label{def.Thetamkstar}
\Theta_{m}^{(k), *} = \Theta^{ (F^{(k-1)}_{-m},f^{(k-1)}_{-m})}_m,\qquad
\Sigma_{m}^{(k), *} = \Sigma_{\omega} + \sum_{j\neq m} (\alpha_j^{(k-1)})^2 B_j B_j^{\top}.
\end{equation}
We show that this procedure corresponds to a biased best response to $(F^{(k-1)}_{-m},f^{(k-1)}_{-m})$, and quantify the decay rate of the resulting bias in \eqref{diff.Ff}.

\textbf{Step 1: Estimation error bounds.}
We first quantify  the estimation error of the RLS estimate  \((\hat{\Theta}_m^{(k)},\hat{\Sigma}_m^{(k)})\) relative to the effective system parameters \((\Theta_{m}^{(k), *},\Sigma_{m}^{(k), *})\). Specifically, we establish the following high-probability   error bound: for any $\delta \in (0,1)$, there exists 
\(K_m(\delta)=O(\log\log(1/\delta))\) such that, with probability at least $1-\delta$,
\begin{equation}\label{estimationerror.ThetaSigma}
\forall k \ge K_m(\delta),\quad
\max\Bigl\{\bigl\|\hat{\Theta}_m^{(k)} - \Theta_m^{(k),*}\bigr\|,\;
\bigl\|\hat{\Sigma}_m^{(k)} - \Sigma_m^{(k),*}\bigr\|\Bigr\}
\;\le\; C\,\eta_m^{(k)}(\delta),
\end{equation}
where  the decay rate $\eta_m^{(k)}(\delta) $ is given by 
\begin{equation}\label{def.etak}
\eta_m^{(k)}(\delta) = \sqrt{\frac{k + \log(1/\delta)}{\lambda^{(1-2\nu_m)k}}}.
\end{equation}
The proof builds upon the arguments in \cite{simchowitz2020naive}, but requires substantially more involved analysis due to the presence of constant offsets in the dynamics and policies, which introduce non-excited directions, and  the need to jointly estimate the drift and covariance parameters \((\Theta_{m}^{(k), *},\Sigma_{m}^{(k), *})\). Extending the analysis to   asynchronous updates requires further  technical efforts (see Step 4).

\textbf{Step 2: From estimation error to strategy deviation.} 
Bounding $\|\Sigma_m^{(k),*} - \Sigma_{\omega}\|$ via \eqref{def.Thetamkstar} and combining it with \eqref{estimationerror.ThetaSigma} yield the 
following high-probability bound 
\begin{equation}\label{estimationerror.ThetaSigma2}
\max\{\|\hat{\Theta}_{m}^{(k)} - {\Theta}_{m}^{(k),*}\|,\;\|\hat{\Sigma}_{m}^{(k)} - \Sigma_{\omega}\|\} \le C\max\left\{\eta^{(k)}_m(\delta), \max\limits_{j\neq m} (\alpha_j^{(k-1)})^2\right\}.
\end{equation}
Since both $\eta_m^{(k)}(\delta)$ and $\alpha_m^{(k)}$  vanish as $k$ grows, the right-hand side of \eqref{estimationerror.ThetaSigma2} is smaller than the threshold   $\delta_m$ in Assumption~\ref{ass.uniform.Phimcontinuity} for all sufficiently large $k$. 
Assumption~\ref{ass.uniform.Phimcontinuity}(\ref{item:feasible}) then guarantees that player $m$ accepts the current estimate, so the bound \eqref{estimationerror.ThetaSigma2} also holds for the accepted parameter pair $(\tilde{\Theta}_m^{(k)},\tilde{\Sigma}_m^{(k)})$. By 
Assumption~\ref{ass.uniform.Phimcontinuity}(\ref{item:lipschitz}),   for any $\delta \in (0,1)$, there exists $K'_m(\delta)\in \mathbb N$ such that, with probability at least $1-\delta$, for all $k \ge K'_m(\delta)$,
\begin{equation}\label{eq:lip-f}
\max\left\{\left\|F^{(k)}_m - \Phi_m(\Theta_m^{(k),*},\Sigma_{\omega},x,0)\right\|,\left\|f^{(k)}_m - \phi_m(\Theta_m^{(k),*},\Sigma_{\omega},x,0)\right\|_2\right\} \le C\max\left\{\eta^{(k)}_m(\delta) ,\max_{1\le j\le M} \alpha_j^{(k-1)} \right\},
\end{equation}
where $(F^{(k)}_m,f^{(k)}_m)=(\Phi_m(\tilde{\Theta}_m^{(k)},\tilde{\Sigma}_m^{(k)},x,\alpha_m^{(k)}),\phi_m(\tilde{\Theta}_m^{(k)},\tilde{\Sigma}_m^{(k)},x,\alpha_m^{(k)}))$.

Now we observe the following key identity:  
\begin{align} \label{eq:equiv1}(\Phi_m(\Theta_m^{(k),*},\Sigma_{\omega},x,0),  \phi_m(\Theta_m^{(k),*},\Sigma_{\omega},x,0))  = (\psi_m(F_{-m}^{(k-1)},f_{-m}^{(k-1)}), \varphi_m(F_{-m}^{(k-1)},f_{-m}^{(k-1)})),
\end{align}
where the best response map $(\psi_m, \varphi_m)$ is given in \eqref{def.bestresponsemap}. 
Indeed, 
consider 
the complete-information problem \eqref{obj.ne} with other players' strategies \((F_{-m}^{(k-1)}, f_{-m}^{(k-1)})\). 
In this case, the dynamic \eqref{state3} can be rewritten in the form of \eqref{dynamic:statistical} with \(\Theta_m = \Theta_m^{(k),*}\), \(\Sigma_m = \Sigma_{\omega}\), and \(u^m_t = -F_m x_t + f_m\),
which means that   the aggregate effect of the opponents' linear strategies on the state evolution is captured by   \(\Theta_m^{(k),*}\). Since player \(m\) faces the same objective in both 
settings, solving the   complete-information problem is equivalent to solving the control problem \eqref{discount_cost_m} with the   parameters   \((\Theta_m^{(k),*},\Sigma_{\omega})\) and \(\alpha_m = 0\). This implies that  
\begin{equation}\label{eqn.relationship}
     J_m \big( (F_m,f_m) ; (F_{-m}^{(k-1)}, f_{-m}^{(k-1)}),x \big)=J^{(\Theta_m^{(k),*},\Sigma_{\omega})}_m \big( (F_m,f_m) ; x,0 \big),
\end{equation}
and  directly yields \eqref{eq:equiv1}.

Combining  \eqref{eq:lip-f} and \eqref{eq:equiv1} allows for interpreting 
$(F_m^{(k)},f_m^{(k)})$   as a \emph{noisy best response} to the opponents' previous strategy profile $(F_{-m}^{(k-1)},f_{-m}^{(k-1)})$.
The error can be controlled   in a high probability  by 
$\eta^{(k)}_m(\delta)$ and $\max\limits_{1\le j\le M} \alpha_j^{(k-1)} $. Specifically,   we prove that, for any $\delta \in (0,1)$, there exists 
\(K(\delta)=O(\log\log(1/\delta))\) such that, with probability at least $1-\delta$,
\begin{equation}\label{diff.Ff}
\forall k\ge K(\delta),\quad \|(F^{(k)},f^{(k)}) - \Psi((F^{(k-1)},f^{(k-1)}))\|_{\mu,1,2} \le C \xi_k(\delta),
\end{equation}
where 
\begin{equation}\label{def.xidelta}
\xi_k(\delta) = \sqrt{k+\log(1/\delta)}\;\lambda^{-(\frac12 - \max\limits_m \{\nu_m\})k} + \lambda^{-(\min\limits_m \{\nu_m\})k}.
\end{equation}

\textbf{Step 3: Convergence via 
the stability of the feedback Nash equilibrium.} 
Under Assumptions \ref{ass.def.bestresponsemap}–\ref{ass.Psi_contractive_selfmap}, there exists a unique stable feedback Nash equilibrium \((F^*,f^*)\). 
On the event \eqref{diff.Ff}, we have
\begin{equation*}
\begin{aligned}
\|(F^{(k)},f^{(k)}) - (F^*,f^*)\|_{\mu,1,2}
&\le \|(F^{(k)},f^{(k)}) - \Psi((F^{(k-1)},f^{(k-1)}))\|_{\mu,1,2}  \\
&\quad + \|\Psi((F^{(k-1)},f^{(k-1)})) - \Psi((F^*,f^*))\|_{\mu,1,2}\\
&\le C \xi_{k}(\delta) + \zeta \|(F^{(k-1)},f^{(k-1)}) - (F^*,f^*)\|_{\mu,1,2},
\end{aligned}
\end{equation*}
where \(\zeta\in(0,1)\) is given in Assumption \ref{ass.Psi_contractive_selfmap}. Solving this recursive inequality case‑by‑case, 
depending on the relative sizes of \(\zeta\), \(\lambda^{-(\frac12-\max\limits_m \nu_m )}\), and \(\lambda^{-\min\limits_m \nu_m }\), and 
using the fact that the epoch lengths grow exponentially at rate \(\lambda\) to translate epoch counts into real time \(t\) prove   Theorem \ref{thm.convergencyne} in the synchronous case.  

\textbf{Step 4: Asynchronous updates.} When players update at different times (see 
Condition \ref{condition:tauk_exp}), we partition the timeline into synchronous and asynchronous phases, and analyze them separately. Synchronous phases are intervals during which all players have completed the same number of updates; the remaining time constitutes asynchronous phases.

Asynchronous updates introduce several additional technical complications in our convergence analysis: (i) the estimation error decompositions contain extra terms from asynchronous phases that must be bounded separately; (ii) the epoch index \(k_m(t)\) in the expression \((F^{(k(t))}, f^{(k(t))})\) may differ across players; and (iii) random asynchronous shifts introduce an additional source of randomness, making update times random as well. 
Condition \ref{condition:tauk_exp} ensures 
the asynchronous time in epoch \(k\) is \(O(k)\) and that \(k/\tau^{(k)}_m\) is summable, which allows us to control these extra terms and to replace individual indices by a common index when deriving rates. When the shifts are independent of the other randomness, we first condition on the \(\sigma\)-algebra generated by all shifts, carry out the analysis under this conditioning, and then remove the conditioning using the uniform boundedness of the shifts. By carefully handling these aspects, the convergence analysis in Steps 1–3 carries over to the asynchronous setting.

\section{Conclusion}\label{sec.conclusion}

In this paper, we provide the first convergence analysis for a radically uncoupled learning procedure in LQ stochastic games. We demonstrate that the last-iterate convergence rate is fundamentally driven by the interplay of three factors: the decay of the estimation error, the decay of the exploration noise, and 
the stability of the Nash equilibrium. Furthermore, we apply this framework to a dynamic Cournot competition model with sticky prices, illustrating how varying levels of price stickiness alter market trajectories during transitions from emerging to mature phases within a highly restrictive informational environment.

Several directions merit further investigation. First, it is of interest to rigorously justify the accelerated convergence induced by publicly revealing aggregate market output, as observed in our numerical experiments.  Second, extending our analysis to other stochastic games with infinite state-action spaces would broaden its applicability and deepen the theoretical understanding of radically uncoupled learning in complex dynamic environments. Third, 
 studying asymmetric information settings, where players possess heterogeneous structural knowledge, may provide further economic insights. Finally, applying uncoupled learning procedures to other dynamic economic and operational models remains a promising direction for future research.

 \newpage
\setlength{\bibsep}{0ex}
\begin{spacing}{1.1}
\bibliographystyle{apalike}
\bibliography{ref}
\end{spacing}

\newpage
\appendix

\section*{Additional Notation.}

 We use $\vee$ and $\wedge$ to denote maximum and minimum, respectively: for real numbers $a,b$, $a\vee b = \max\{a,b\}$ and $a\wedge b = \min\{a,b\}$. The relation $a_k \asymp b_k$ indicates that there exist positive constants $c, C$ such that $c b_k \le a_k \le C b_k$ for all sufficiently large $k$. The unit sphere in \(\mathbb{R}^n\) is denoted by \(\mathbb{S}^{n-1}\), i.e., the set of vectors \(v \in \mathbb{R}^n\) with \(\|v\|_2 = 1\). Throughout this appendix, \(0 \le C < \infty\) denotes a generic constant depending on model and algorithm parameters; its value may change from line to line. When a constant associated with a quantity \(X\) needs to be explicitly referenced, we use \(\overline{C}_X\), \(\underline{C}_X\), and \(L_X\) to denote upper bounds, lower bounds, and Lipschitz constants, respectively. All these constants are independent of the time index \(t\), the epoch index \(k\), and the probability bound \(\delta\).

\section{Proof of Theorem \ref{thm.convergencyne}}\label{sec.proofconvergence}
The proof is divided into several parts.  
Appendix \ref{appendix.RLSerror} bounds the error of  the RLS estimator for a representative player $m$ in each epoch.  
Appendix \ref{appendix.Fferror} bounds the deviation between the current strategy profile and the collective best response to the previous profile, i.e., \(\|(F^{(k+1)},f^{(k+1)}) - \Psi((F^{(k)},f^{(k)}))\|_{\mu,1,2}\).  
Based on these results, Appendix \ref{appendix.proofThm1}  
completes the proof of Theorem \ref{thm.convergencyne}. 
Appendix \ref{appendix.additionalproofs} presents the proofs of the auxiliary lemmas in Appendix \ref{appendix.RLSerror}. All analysis is conducted under Assumptions \ref{ass.def.bestresponsemap}–\ref{ass.uniform.Phimcontinuity} and Condition \ref{condition:tauk_exp}. 

In every run of Algorithm~\ref{algorithm.indepedentlearning.Mplayer} there are three independent sources of randomness: the state noise \(\{\omega_t\}_{t=0}^\infty\), the exploration noise \(\{v_t^m\}_{t=0}^\infty\), and the asynchronous shifts \(\{\tilde{\tau}_m^{(k)}\}_{k=1}^\infty\). We first condition on the \(\sigma\)-algebra \(\mathcal{T} = \sigma(\{\tilde{\tau}_m^{(k)} : 1\le m\le M,\; k\ge 1\})\) generated by all shifts and carry out the subsequent analysis under this conditioning. Conditioning on \(\mathcal{T}\) turns each epoch length \(\tau_m^{(k)}\) into a deterministic constant satisfying  
$\tau^{(k)} \le \tau_m^{(k)} \le \tau^{(k)}+\bar{\tau}$
for every \(m\) and \(k\); all remaining randomness then comes only from the state and exploration noises.  
Throughout the rest of this section up to the end of the derivation of Theorem~\ref{thm.convergencyne}, every statement is understood under the conditional probability \(\mathbb{P}(\cdot\mid\mathcal{T})\), and \(\mathbb{E}\) denotes the corresponding conditional expectation.  
For brevity we keep the notation \(\mathbb{P}\) and \(\mathbb{E}\) without explicit conditioning.  
All constants that appear in the bounds are deterministic and depend only on the parameters of Condition~\ref{condition:tauk_exp}; consequently they are uniform over all realizations of \(\mathcal{T}\).  
The unconditional statements of the theorem are recovered in Step~4 of Appendix~\ref{appendix.proofThm1} by taking expectation over \(\mathcal{T}\) via the law of total probability.

\subsection{Estimation error of \((\hat{\Theta}^{(k)}_m,\hat{\Sigma}^{(k)}_m)\)}\label{appendix.RLSerror}

In this section we fix a representative player~\(m\) and bound the errors of her
\(k\)-th RLS estimates \(\hat{\Theta}^{(k)}_m\) and \(\hat{\Sigma}^{(k)}_m\) relative to
the effective parameters \((\Theta_m^{(k),*},\Sigma_m^{(k),*})\) defined in~\eqref{def.Thetamkstar}.
The analysis for other players is identical.
The main result is the following proposition.

\begin{proposition}\label{prop.convergencyTheta}
Suppose Assumptions~\ref{ass.stationary_x} and~\ref{ass.uniform.Phimcontinuity} and
Condition~\ref{condition:tauk_exp} hold.  For any \(\delta\in(0,1)\) there exist
\(K_m(\delta)=O(\log\log(1/\delta))\) and a constant \(C<\infty\) such that
\(\mathbb{P}(E_m(\delta))\ge 1-\delta\), where
\[
E_m(\delta)=\Bigl\{\forall k\ge K_m(\delta):\;
\max\bigl\{\|\hat\Theta_m^{(k)}-\Theta_m^{(k),*}\|,\;
            \|\hat\Sigma_m^{(k)}-\Sigma_m^{(k),*}\|\bigr\}
      \le C\,\eta_m^{(k)}(\delta)\Bigr\},
\]
with \(\eta_m^{(k)}(\delta)\) defined in \eqref{def.etak}.
\end{proposition}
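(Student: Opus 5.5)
The plan is to fix player $m$ and epoch $k$, conditioning throughout on $\mathcal T$ as described above. We decompose the RLS error algebraically, then bound each piece with standard self-normalized and small-ball arguments. A union bound over $k$ with $\delta_k=\delta/(2k^2)$ finishes the argument. The aim is to show that the exploration $\alpha_m^{(k)}$ alone excites player $m$'s regressor $z_t^m$ at level $(\alpha_m^{(k)})^2\tau_m^{(k)}\asymp \lambda^{(1-2\nu_m)k}$. Everything else (opponents' exploration, state noise, asynchronous mismatch) is treated as additive noise or bias.

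\textbf{Step 1 (decomposition).} For $t$ in player $m$'s $k$-th epoch, write
\[
x_{t+1}=(\Theta_m^{(k),*})^\top z_t^m+\tilde\omega_t^m+e_t,\qquad \tilde\omega_t^m=\omega_t+\textstyle\sum_{j\ne m}\alpha_j^{(k-1)}B_jv_t^j .
\]
Here $e_t$ is nonzero only at the $O(1)$ asynchronous time steps (bounded by $\bar\tau$) at which some opponent has already switched to a new strategy, or not yet switched. By \eqref{def.hatTheta} this gives
\[
\hat\Theta_m^{(k)}-\Theta_m^{(k),*}=(V_m^{(k)})^{-1}\Bigl(\textstyle\sum z_t^m(\tilde\omega_t^m)^\top+\sum z_t^m e_t^\top-\beta_m\Theta_m^{(k),*}\Bigr).
\]

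\textbf{Step 2 (state and upper bounds).} Assumption~\ref{ass.stationary_x}, together with the boundedness of $\mathcal A\times\mathcal B$, makes $x_t$ a uniformly stable linear recursion driven by sub-Gaussian noise. Hence with probability $1-\delta_k$ we have $\max_t\|x_t\|\le C\sqrt{\log(\tau_m^{(k)}/\delta_k)}$. This yields $\|V_m^{(k)}\|\le C\tau_m^{(k)}\log(\tau_m^{(k)}/\delta_k)$, so $\log\det V_m^{(k)}=O(k+\log(1/\delta))$. The mismatch terms contribute $\|\sum z_te_t^\top\|\le C\log(\cdot)$, since they come from boundedly many steps.

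\textbf{Step 3 (lower bound on $\lambda_{\min}(V_m^{(k)})$; main obstacle).} The difficulty is the non-excited constant coordinate "$1$" and the offset $f_m$. Write $z_t^m=G\,[1,x_t^\top,(v_t^m)^\top]^\top$, where $G$ is block lower-triangular with diagonal $(1,I,\alpha_m I)$, so $\sigma_{\min}(G)\ge c\,\alpha_m^{(k-1)}$ uniformly because $\|F_m\|,\|f_m\|$ are bounded. Conditionally on $\mathcal F_{t-1}$, $x_t=\bar m_t+\omega_{t-1}+(\text{opponents' exploration})$ has covariance at least $\Sigma_\omega\succ0$. The second-moment matrix of $(1,x_t)$ is $\begin{bmatrix}1&\bar m_t^\top\\ \bar m_t&\bar m_t\bar m_t^\top+\Sigma\end{bmatrix}$, whose minimal eigenvalue is at least $c/(1+\|\bar m_t\|^2)$. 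The independent $v_t^m$ adds identity covariance. I would first try to turn this into a block-martingale small-ball condition in the style of Simchowitz et al.: for each unit $w$, $\mathbb P(|\langle w,z_t^m\rangle|\ge c\alpha_m\mid\mathcal F_{t-1})\ge p$, using sub-Gaussian anticoncentration and the fact that $\|\bar m_t\|$ is bounded on the good event of Step 2. A covering argument over $\mathbb S$, combined with the upper bound of Step 2, then gives $\lambda_{\min}(V_m^{(k)})\ge c(\alpha_m^{(k-1)})^2\tau_m^{(k)}$ with probability $1-\delta_k$. This holds once $(\alpha_m^{(k-1)})^2\tau_m^{(k)}\ge C(k+\log(1/\delta))$, and that requirement is what forces $K_m(\delta)=O(\log\log(1/\delta))$. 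If the small-ball route fails, the fallback is matrix Freedman applied to the conditional lower bound, accepting a logarithmic loss absorbed in $\sqrt{k+\log(1/\delta)}$.

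\textbf{Step 4 (concluding).} Apply the self-normalized martingale bound of Abbasi-Yadkori et al. to $\sum z_t^m(\tilde\omega_t^m)^\top$; the noise $\tilde\omega_t^m$ is sub-Gaussian with uniformly bounded proxy. This gives
\[
\|\hat\Theta_m^{(k)}-\Theta_m^{(k),*}\|\le C\,\frac{\sqrt{k+\log(1/\delta)}}{\alpha_m^{(k-1)}\sqrt{\tau_m^{(k)}}}+C\frac{\log(\cdot)}{(\alpha_m^{(k-1)})^2\tau_m^{(k)}}\le C\eta_m^{(k)}(\delta).
\]
For $\hat\Sigma_m^{(k)}$, expand the residual as $\tilde\omega_t^m+e_t-(\hat\Theta_m^{(k)}-\Theta_m^{(k),*})^\top z_t^m$. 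The average of $\tilde\omega_t^m(\tilde\omega_t^m)^\top$ is compared with its covariance $\Sigma_m^{(k),*}$ using a sub-exponential Bernstein bound with a net, which gives deviation $\sqrt{\log(1/\delta_k)/\tau}\le\eta$. The cross and quadratic terms are controlled by $\|V^{1/2}(\hat\Theta-\Theta^*)\|^2/\tau$ and the Step 1 self-normalized quantity, both $O(\eta)$. Summing $\delta_k$ over $k$ gives total failure probability at most $\delta$, which yields $E_m(\delta)$.
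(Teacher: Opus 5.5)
\textbf{Gap in Step 3.} Your reduction $z_t^m=G\,y_t$ with $y_t=[1,x_t^\top,(v_t^m)^\top]^\top$ and $\sigma_{\min}(G)\ge c\,\alpha_m^{(k-1)}$ is fine. So everything hinges on showing $\lambda_{\min}\bigl(\sum_t y_ty_t^\top\bigr)\ge c\,\tau_m^{(k)}$ with $c$ independent of $k$ and $\delta$. The per-step small-ball argument you propose does not give this.

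As you computed, the conditional second moment of $(1,x_t)$ has minimal eigenvalue only of order $1/(1+\|\bar m_t\|^2)$, and this order is sharp. Take $w=(a,b,0)$ with $b\parallel\bar m_t$, $\|b\|\asymp 1/(1+\|\bar m_t\|)$ and $a=-b^\top\bar m_t$. Then $\langle w,y_t\rangle$ is just $b^\top(\text{noise})$, so the small-ball threshold is $c/(1+\|\bar m_t\|)$.

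The good event of Step 2 does not make $\|\bar m_t\|$ bounded. It only gives $\max_t\|x_t\|\le C\sqrt{\log(\tau_m^{(k)}/\delta_k)}$, which grows like $\sqrt{k+\log(1/\delta)}$. In addition, the BMSB condition must hold conditionally almost surely, not on an event defined through a maximum over the whole epoch.

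Using the worst case yields at best $\lambda_{\min}(V_m^{(k)})\gtrsim(\alpha_m^{(k-1)})^2\tau_m^{(k)}/\log(\tau_m^{(k)}/\delta_k)$. Your Step 4 bound then becomes of order $(k+\log(1/\delta))\lambda^{-(\frac12-\nu_m)k}=\sqrt{k+\log(1/\delta)}\,\eta_m^{(k)}(\delta)$. This extra factor is not absorbed: $\eta_m^{(k)}(\delta)$ already contains exactly the $\sqrt{k+\log(1/\delta)}$ coming from $\log\det V_m^{(k)}+\log(1/\delta_k)$, and $C$ must not depend on $k$ or $\delta$. The same objection applies to your matrix-Freedman fallback, which explicitly concedes the logarithmic loss.

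\textbf{Missing idea.} The non-excited constant direction must be controlled through epoch \emph{averages}, not pointwise conditional means. The paper writes $X_k=\frac{1}{\tau_m^{(k)}}\sum_t\bar x_t\bar x_t^\top$ as in \eqref{eq:Xk_block}, with empirical mean $\hat\mu_k$ and centered covariance $\hat\Gamma_k$.
\begin{itemize}
\item $\|\hat\mu_k\|_2\le C_\mu$ holds with high probability because it is an average (Bernstein), unlike a maximum.
\item $\sigma_{\min}(\hat\Gamma_k)\ge c$ follows from BMSB for the scalar increments $q^\top(x_{t+1}-x_t)$, whose $\mathcal F_t$-measurable drift does not affect the small-ball probability, combined with $\sum_t(a_{t+1}-a_t)^2\le 4\sum_t a_t^2$.
\item Lemma~\ref{lemma.minieigenvalue} then gives $\sigma_{\min}(X_k)\ge c$.
\end{itemize}

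Within your own framework, an equivalent repair is to keep the conditional bound $c/(1+\|\bar m_t\|^2)$ but sum it using Jensen: $\frac{1}{\tau}\sum_t(1+\|\bar m_t\|^2)^{-1}\ge(1+\frac{1}{\tau}\sum_t\|\bar m_t\|^2)^{-1}$. This needs the averaged bound $\sum_t\|x_t\|_2^2\le C\tau_m^{(k)}$ as in \eqref{bound.x2} instead of the maximum. Add a lower-tail martingale concentration for truncated squares and a net argument.

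A minor point: the asynchronous tail has $O(k)$ steps, not $O(\bar\tau)$, because the shifts accumulate in $\bar\tau_m^{(k)}=\sum_{i\le k}\tau_m^{(i)}$. This is harmless, since the resulting bias is still divided by $\lambda^{(1-2\nu_m)k}$.

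Once the eigenvalue bound is fixed, the rest of your argument is sound: the $G$-reduction, the self-normalized bound, and the $\hat\Sigma$ expansion through $V^{1/2}(\hat\Theta-\Theta^*)$.
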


The proof of Proposition \ref{prop.convergencyTheta} requires 
addressing several key difficulties
beyond the analysis for   the classical  single‑player LQ problem 
\citep{simchowitz2020naive}.
First, due to the presence of constant offsets in the dynamics and linear cost terms, the regression
vector \(z_t^{m}\) contains a non‑excited direction (the constant~\(1\)), which requires
extra care when establishing the growth rate of the information matrix.
Second, the asynchronous periods introduce additional terms in the decomposition of
\(\hat{\Theta}_{m}^{(k)}-\Theta_{m}^{(k),*}\), which must be bounded separately.
Third, unlike the uncontrolled LQ setting where only the drift parameter error is
estimated, here we must also bound the covariance estimation error
\(\|\hat{\Sigma}^{(k)}_{m} - \Sigma_{m}^{(k),*}\|\).

We introduce several auxiliary lemmas to handle these three difficulties. To prepare for their statements, we first allocate the failure probability and
rewrite the state dynamics from player \(m\)'s viewpoint.

\paragraph{Probability allocation and dynamic characterization.}
For a given confidence \(\delta\in(0,1)\) set
\(\delta_k = 6\delta/[\pi^{2}(k+1)^{2}]\); then \(\sum_{k=0}^{\infty}\delta_k=\delta\).
All high‑probability statements that follow are first established for a \emph{single}
epoch~\(k\) with failure probability proportional to \(\delta_k\).
Summing these bounds over \(k\) via a union bound will yield the ``for all large~\(k\)''
statement in Proposition~\ref{prop.convergencyTheta}.

Under Algorithm~\ref{algorithm.indepedentlearning.Mplayer}, the state dynamics~\eqref{state}
can be rewritten from player~\(m\)'s viewpoint as
\begin{equation}\label{def.tildeomega}
x_{t+1} = (\Theta_{m,t}^{*})^{\top} z^{m}_t + \tilde{w}^{m}_t, \qquad
z^{m}_t = [1,\; x_t^{\top},\; (u_t^{m,(k_m(t))})^{\top}]^{\top},
\end{equation}
where \(\tilde{w}^{m}_t\) is independent sub‑Gaussian with mean zero and
covariance matrix \(\Sigma_{m,t}^{*}\), and
\begin{align*}\label{def.Thetamt}
\Theta_{m,t}^{*} = \Big[ A_0 + \sum_{j\neq m} B_j f_j^{(k_j(t))},\; A - \sum_{j\neq m} B_j F_j^{(k_j(t))},\; B_{m} \Big]^{\top}, \quad 
\Sigma_{m,t}^{*} = \Sigma_{\omega} + \sum_{j\neq m} (\alpha_j^{(k_j(t))})^2 B_j B_j^{\top}.
\end{align*}

\paragraph*{Auxiliary lemmas.}
Lemmas~\ref{lem:V_eigen}-\ref{lem:cov_noise} below address the three difficulties
mentioned above.  They employ two‑sided bounds on sub‑Gaussian matrices
(Theorem~4.6.1 of~\cite{vershynin2018high}), the block martingale small‑ball
(BMSB) condition~\citep{simchowitz2018learning}, the self‑normalised martingale
inequality (Lemma~E.2 of~\cite{simchowitz2020naive} or Theorem~14.7 of~\cite{de2009self}), and Rayleigh quotient
techniques; together they lead to the polylogarithmic threshold
\(O(\log\log(1/\delta))\).  Detailed proofs of Lemmas~\ref{lem:V_eigen}-\ref{lem:cov_noise} are deferred to
Appendix~\ref{appendix.additionalproofs}.  Lemma~\ref{lem:noise} is a direct
consequence of Lemma~E.2 of~\cite{simchowitz2020naive}.

\begin{lemma}[Information matrix]\label{lem:V_eigen}
There exist constants \(\underline{C}_V,\overline{C}_V>0\) and
\(K_{V,m}(\delta)=O(\log\log(1/\delta))\) such that for every \(k\ge K_{V,m}(\delta)\), it holds for some event \(E_{V,k}\) with \(\mathbb{P}(E_{V,k})\ge 1-\delta_k/4\) that
\[
\sigma_{\min}\!\bigl(V_m^{(k)}\bigr)\ge \underline{C}_V\,\tau_m^{(k)}(\alpha_m^{(k-1)})^{2},
\quad
\|V_m^{(k)}\|\le
\overline{C}_V\tau_m^{(k)} ,
\quad \det(V_m^{(k)})\le
(\overline{C}_V\tau_m^{(k)})^{1+n+d_m}.
\]
\end{lemma}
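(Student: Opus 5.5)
We have to bound the information matrix \(V_m^{(k)}=\beta_m I+\sum_{t\in\mathcal I_k} z^m_t(z^m_t)^\top\) over epoch \(k\), writing \(\mathcal I_k=[\bar\tau_m^{(k-1)},\bar\tau_m^{(k)})\). The upper bounds and the lower bound are proved separately and combined on an event of probability at least \(1-\delta_k/4\).

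\textbf{Upper bounds.} By Assumption~\ref{ass.stationary_x}, the closed-loop matrix \(A-\sum_j B_jF_j^{(k_j(t))}\) has norm at most some \(\gamma<1\) uniformly, even when the gains switch asynchronously. The offsets are bounded by \(\kappa'_j\), and the exploration scales by \(\bar\alpha\). Unrolling the recursion gives
\[
x_t=\Pi_{t,0}x+\sum_s \Pi_{t,s+1}\big(\text{bounded drift}+\textstyle\sum_j\alpha_jB_jv^j_s+\omega_s\big),\qquad \|\Pi_{t,s}\|\le\gamma^{t-s}.
\]
Hence \(x_t\) is a bounded mean plus a sub-Gaussian vector with uniformly bounded variance proxy, and the same holds for \(u_t^m\) and \(z_t^m\).

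A union bound over the \(\tau_m^{(k)}\) time points gives \(\max_{t\in\mathcal I_k}\|z_t^m\|^2\le C\log(\tau_m^{(k)}/\delta_k)\). This alone would only give \(\|V\|\le C\tau\log\) rather than \(C\tau\). To remove the logarithm, I would control \(\sum_t\|z_t^m\|^2\) directly. The sum of squares of a geometrically-mixing sub-Gaussian linear process concentrates: apply Hanson--Wright (or Theorem 4.6.1 of \cite{vershynin2018high}) to the stacked noise vector, using that the linear map from noise to states has operator norm at most \(C/(1-\gamma)\). This gives
\[
\sum_t\|z_t^m\|^2\le C\tau_m^{(k)}+C\log(1/\delta_k)\le C\tau_m^{(k)}
\]
once \(\tau_m^{(k)}\gtrsim\log(1/\delta_k)\asymp\log k+\log(1/\delta)\). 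Since \(\tau_m^{(k)}\ge\tau\lambda^k\), this holds for \(k\ge K=O(\log\log(1/\delta))\). Then \(\|V_m^{(k)}\|\le\beta_m+\sum_t\|z_t^m\|^2\le\overline C_V\tau_m^{(k)}\), and \(\det V\le\|V\|^{1+n+d_m}\).

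\textbf{Lower bound.} This is the main obstacle: the constant coordinate is not excited, and \(u^m_t=-F_mx_t+f_m+\alpha_mv_t^m\) is nearly collinear with \((1,x_t)\). I would use the BMSB condition of \cite{simchowitz2018learning} with block length \(1\) for the process \(z_t^m\), conditional on \(\mathcal F_{t-1}\). Write
\[
z_t^m=\bar z_t+\xi_t,\qquad \xi_t=\big(0,\ \omega_{t-1}+\textstyle\sum_j\alpha_jB_jv_{t-1}^j,\ -F_m(\cdots)+\alpha_m v_t^m\big),
\]
where \(\bar z_t\) is \(\mathcal F_{t-1}\)-measurable. The noise \(\xi_t\) excites every direction of the \((x,u)\) block with variance at least \(c\,\alpha_m^2\): the \(x\)-part carries \(\Sigma_\omega\succ0\), and \(v_t^m\) is independent of \(x_t\), which handles the near-collinearity of \(u\) with \(x\). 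The \(1\)-direction is treated by a Rayleigh-quotient argument. Take a unit vector \(w=(w_0,w_x,w_u)\). If \(\|(w_x,w_u)\|\ge c_1|w_0|\), the small-ball bound \(\mathbb P(|w^\top z_t|\ge c\alpha_m\mid\mathcal F_{t-1})\ge p\) follows from Paley--Zygmund on the fresh noise, since \(w^\top\bar z_t\) is only a shift. Otherwise \(|w_0|\) dominates and \(w^\top z_t\approx w_0+O(\|(w_x,w_u)\|\,\|(x_t,u_t)\|)\). Here I would instead bound \(\frac1\tau\sum_t(w^\top z_t)^2\ge (w_0+(w_x,w_u)^\top\overline{(x,u)})^2+\text{variance part}\), which is bounded below by a constant times \(\alpha_m^2\) after optimizing over the split.

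With the small-ball property in hand, Proposition 2.5 of \cite{simchowitz2018learning} together with an \(\epsilon\)-net over \(\mathbb S^{n+d_m}\) gives
\[
\sigma_{\min}\Big(\sum_t z_tz_t^\top\Big)\ge c\,\tau_m^{(k)}\alpha_m^2
\]
with failure probability \(\exp(-c\tau_m^{(k)}p^2+C\log(\overline C_V/(\alpha_m^2)))\). This is at most \(\delta_k/8\) once \(\tau_m^{(k)}\gtrsim\log(1/\delta_k)+k\). The exploration scale used in epoch \(k\) is \(\alpha_m^{(k-1)}=\bar\alpha(\tau_m^{(k)})^{-\nu_m}\), and \(\nu_m<1/2\) guarantees \(\tau(\alpha_m^{(k-1)})^2\to\infty\), which keeps the net cardinality harmless.

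Finally, the asynchronous segment of length \(O(\bar\tau)\), during which opponents' gains change inside the epoch, only affects \(\bar z_t\). It therefore does not harm the conditional small-ball argument. Intersecting the two events yields \(E_{V,k}\).
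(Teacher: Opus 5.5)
Your proof is sound in outline, but the lower bound takes a different route from the paper's.

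\textbf{How the paper proves the lower bound.} It factors $z_t^m=G_{m,k}\bar x_t+\alpha_m^{(k-1)}J_mv_t^m$ with $\bar x_t=[1,x_t^\top]^\top$ and works block by block:
\begin{itemize}
\item an $\alpha$-free bound $\sigma_{\min}(X_k)\ge\underline{C}_{\bar x}$ on the state second-moment matrix, obtained by applying small-ball to the increments $q^\top(x_{t+1}-x_t)$ (which removes the mean), passing to the centred covariance $\hat\Gamma_k$ via $(a_{t+1}-a_t)^2\le 2a_{t+1}^2+2a_t^2$, bounding $\hat\mu_k$, and applying Lemma~\ref{lemma.minieigenvalue};
\item concentration $W_k\succeq\frac12 I_{d_m}$;
\item a block-triangular change of basis combining the two;
\item the cross term $(\alpha_m^{(k-1)})^{-1}C_k$ treated as a perturbation, which is negligible only because $\tau_m^{(k)}(\alpha_m^{(k-1)})^2\to\infty$.
\end{itemize}

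\textbf{How yours differs.} You run one conditional small-ball argument on the full regressor. The one-step fresh noise has $(x,u)$-covariance $\succeq c(\alpha_m^{(k-1)})^2 I$ by the same triangular structure. You handle the unexcited constant coordinate through $\frac{1}{\tau_m^{(k)}}\sum_t(w^\top z_t^m)^2\ge(w_0+y^\top\bar\zeta)^2$, where $y=(w_x,w_u)$ and $\bar\zeta$ is the epoch average of $(x_t,u_t^m)$. Note that $\|\bar\zeta\|_2$ is bounded for free on your upper-bound event by Cauchy--Schwarz.

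\textbf{What each route buys.} Yours absorbs the state--exploration correlation into the conditioning, so no cross-term estimate is needed. Moreover, $\alpha_m^{(k-1)}$ enters only through the logarithm of the net cardinality, so your lower bound does not rely on $\tau_m^{(k)}(\alpha_m^{(k-1)})^2\to\infty$. The price is that the standard small-ball-plus-net theorem cannot be quoted verbatim. The small-ball scale, of order $\alpha_m^{(k-1)}\|y\|_2$, degenerates near the constant direction. So the union bound must be restricted to net points in an enlarged cone $\{\|y\|_2\ge\frac{c_1}{2}|w_0|\}$ at resolution $\epsilon\asymp\alpha_m^{(k-1)}$, and the complementary cone must be covered pathwise by the mean term. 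The paper's modular route only ever uses non-degenerate small-ball bounds and standard matrix concentration.

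\textbf{A step in your upper bound that needs repair.} Hanson--Wright requires a fixed matrix, but your map from noise to states is data-dependent. The opponents' gains switch inside the epoch using data from that same epoch, and all gains depend on earlier noise. For the same reason, $x_t$ is not literally ``bounded mean plus sub-Gaussian vector''. Use instead the pathwise domination you already have from $\|\Pi_{t,s}\|\le\gamma^{t-s}$:
$\|x_t\|_2\le\gamma^{t-t_0}\|x_{t_0}\|_2+\sum_{s=t_0}^{t-1}\gamma^{t-1-s}(C+\|\tilde w_s\|_2)$.
Follow this with Cauchy--Schwarz and Bernstein's inequality for $\sum_s\|\tilde w_s\|_2^2$. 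This is essentially the paper's Young-inequality argument.

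Finally, the asynchronous part of an epoch has length $O(k\bar\tau)$, not $O(\bar\tau)$, because the shifts accumulate in $\bar\tau_j^{(k)}$. Your argument never uses this length, so the slip is harmless.
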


\begin{lemma}[Asynchronous bias]\label{lemma.diffTheta1}
There exists \(K_{A,m}(\delta)=O(\log\log(1/\delta))\) such that for all
\(k\ge K_{A,m}(\delta)\), it holds for some event \(E_{A,k}\) with
\(\mathbb{P}(E_{A,k})\ge 1-\delta_k/4\) that
\[
\max\left\{
\left\|\sum_{t=\bar\tau_m^{(k-1)}}^{\bar\tau_m^{(k)}-1}
      z_t^{m}(z_t^{m})^{\top}(\Theta_{m,t}^{*}-\Theta_{m}^{(k),*})^{\top}\right\|,\;
\left\|\sum_{t=\bar\tau_m^{(k-1)}}^{\bar\tau_m^{(k)}-1}
      (\Theta_{m,t}^{*}-\Theta_m^{(k),*})^{\top} z_t^{m}(\tilde{w}_t^{m})^{\top}\right\|
\right\}
\le C\bigl(k^{2}+\log(1/\delta)\bigr).
\]
\end{lemma}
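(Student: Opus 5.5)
The statement to prove is Lemma~\ref{lemma.diffTheta1}. Its two sums are nonzero only over the asynchronous part of epoch $k$ of player $m$, i.e.\ the times $t$ where some opponent $j\neq m$ uses a strategy index $k_j(t)$ different from the one entering $\Theta_m^{(k),*}$.

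The first step is to bound the length of this set. Conditionally on $\mathcal{T}$, all epoch lengths are deterministic and satisfy $\tau^{(i)}\le\tau_j^{(i)}\le\tau^{(i)}+\bar\tau$. Hence the cumulative update times satisfy $|\bar\tau_j^{(i)}-\bar\tau_m^{(i)}|\le i\bar\tau$. For $k$ large, player $j$'s $(k-1)$-th and $k$-th update times therefore lie within $k\bar\tau$ of player $m$'s. The set $\mathcal{I}_k$ of times in $[\bar\tau_m^{(k-1)},\bar\tau_m^{(k)})$ with $\Theta^*_{m,t}\neq\Theta^{(k),*}_m$ thus has cardinality $|\mathcal{I}_k|\le 2(M-1)k\bar\tau=O(k)$. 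On $\mathcal{I}_k$ the difference $\Theta^*_{m,t}-\Theta^{(k),*}_m$ only involves $B_j(f_j^{(\cdot)}-f_j^{(\cdot)})$ and $B_j(F_j^{(\cdot)}-F_j^{(\cdot)})$. Its norm is bounded by a deterministic constant $C$, since all strategies lie in the bounded sets $\mathcal{A}_j\times\mathcal{B}_j$. This holds regardless of whether the previous strategy is used for a small first window or the next strategy for a small final window.

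Next I would bound the summands. Each first-sum term satisfies $\|z_t^m (z_t^m)^\top(\Theta^*_{m,t}-\Theta^{(k),*}_m)^\top\|\le C\|z_t^m\|_2^2$, and each second-sum term is at most $C\|z_t^m\|_2\|\tilde w_t^m\|_2$. So it suffices to show that, with probability at least $1-\delta_k/4$,
\[
\max_{t\in\mathcal{I}_k}\bigl(\|z_t^m\|_2^2+\|\tilde w^m_t\|_2^2\bigr)\le C\bigl(k+\log(1/\delta)\bigr).
\]
Summing over $|\mathcal{I}_k|=O(k)$ terms then gives $C k(k+\log(1/\delta))$. I still need to reconcile this with the stated $C(k^2+\log(1/\delta))$; see the last paragraph.

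To bound $z_t^m$ I would unroll the state recursion. Assumption~\ref{ass.stationary_x} gives $\|A-\sum_j B_jF_j\|\le\gamma<1$ uniformly over the time-varying admissible closed-loop matrices. Hence
\[
x_t = \text{(deterministic bounded drift)} + \Pi\,x_0 + \sum_{s<t}\Pi_{s,t}\Bigl(\omega_s+\sum_j\alpha_j B_j v_s^j\Bigr),
\qquad \|\Pi_{s,t}\|\le\gamma^{t-s-1},
\]
where $\Pi$ and $\Pi_{s,t}$ are products of the closed-loop matrices.

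This representation is the main obstacle. The closed-loop matrices depend on past data through the strategies, so $x_t$ is not a fixed linear combination of the noise. I would avoid needing such a combination by using the cruder estimate
\[
\|x_t\|_2\le C+\sum_{s<t}\gamma^{t-s-1}\|\eta_s\|_2,
\]
where $\eta_s$ collects the i.i.d.\ sub-Gaussian noises. Then $\max_{t\le T}\|x_t\|_2\le C\bigl(1+\max_{s\le T}\|\eta_s\|_2\bigr)$. A sub-Gaussian maximal bound over $T\le C\lambda^k$ indices gives $\max_{s\le T}\|\eta_s\|_2^2\le C(\log T+\log(1/\delta_k))\le C(k+\log(1/\delta))$ with probability at least $1-\delta_k/8$. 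The controls $u_t^m$ obey the same bound, since $F_m$ and $f_m$ are bounded and the exploration noise is sub-Gaussian.

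Finally, to get $k^2+\log(1/\delta)$ rather than $k\log(1/\delta)$, I would restrict the maximal bound to the $O(k)$ times in $\mathcal{I}_k$ and handle the second sum as a martingale. I would apply a sub-exponential Bernstein bound to $\sum_{t\in\mathcal{I}_k}\|\eta_t\|^2$, which gives $Ck+C\log(1/\delta_k)$ with $\log(1/\delta_k)\le C(\log k+\log(1/\delta))$. For the first sum, the geometric-sum bound yields $\sum_{t\in\mathcal{I}_k}\|x_t\|_2^2\le C\bigl(k+\|x_{\text{start}}\|^2+\sum_{s\in\mathcal{I}_k}\|\eta_s\|^2\bigr)$. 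The remaining start term $\|x_{\text{start}}\|^2\le C(k+\log(1/\delta))$ is multiplied by $O(1)$ rather than $O(k)$ after geometric summation. I would then take $K_{A,m}(\delta)$ large enough, of order $O(\log\log(1/\delta))$, that $k\bar\tau$ is smaller than the epoch lengths. This ensures the asynchronous windows lie strictly inside epoch $k$ and involve only adjacent strategy indices.
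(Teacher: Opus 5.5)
Your argument is correct, and its first half matches the paper's proof. The paper also splits epoch $k$ into the synchronous phase $S^{(k)}$, where $\Theta^*_{m,t}=\Theta^{(k),*}_m$, and an asynchronous tail $T^{(k)}_m$ of length $O(k)$, on which $\|\Theta^*_{m,t}-\Theta^{(k),*}_m\|\le C$ by compactness.

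You diverge at the closing step, and there your route is the sound one. The paper applies a maximal inequality over the whole epoch, $\max_t\|z^m_t\|_2,\ \max_t\|\tilde w^m_t\|_2\le C\sqrt{\log(\tau^{(k)}_m/\delta_k)}$. It multiplies by $|T^{(k)}_m|\le Ck$ to get $Ck\log(\tau^{(k)}_m/\delta_k)$, and then asserts $k\log(\tau^{(k)}_m/\delta_k)\le C(k^2+\log(1/\delta))$. That inequality is exactly the reconciliation you flagged, and it fails for a $\delta$-free $C$. The cross term $k\log(1/\delta)$ is not absorbed when, for example, $k=\sqrt{\log(1/\delta)}$, which is permitted because $K_{A,m}(\delta)=O(\log\log(1/\delta))$.

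So the max-times-count route really yields only $Ck(k+\log(1/\delta))$. That still suffices downstream: $I_2$ and $\Lambda_3$--$\Lambda_5$ only need a polynomial in $k$ and $\log(1/\delta)$ over a denominator growing like $\lambda^{(1-2\nu_m)k}$, with thresholds still $O(\log\log(1/\delta))$. But it does not prove the lemma as stated.

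Your refinement does prove it. You use the pathwise bound $\|x_t\|_2\le\gamma^{t-t_0}\|x_{t_0}\|_2+C+\sum_{t_0\le s<t}\gamma^{t-s-1}\|\eta_s\|_2$ on each of the at most two contiguous asynchronous windows. You combine it with Bernstein for $\sum_{s\in\mathcal{I}_k}\|\eta_s\|_2^2$ over an index set that is deterministic given $\mathcal{T}$. This gives $\sum_{t\in\mathcal{I}_k}\|z^m_t\|_2^2\le C(k+\log(1/\delta_k))$, which is stronger than the stated $C(k^2+\log(1/\delta))$.

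The pathwise bound relies on Assumption~\ref{ass.stationary_x} controlling the spectral norm rather than only the spectral radius. It also rigorously handles the data-dependent closed-loop matrices, whereas the paper only asserts that the state is uniformly sub-Gaussian.

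Two clarifications. First, the operative fix is Bernstein on the sum; restricting the maximal inequality to $\mathcal{I}_k$ alone would still leave a $k\log(1/\delta)$ term. Second, the cross sum needs no martingale argument: $\sum_{t\in\mathcal{I}_k}\|z^m_t\|_2\|\tilde w^m_t\|_2\le\frac12\sum_{t\in\mathcal{I}_k}(\|z^m_t\|_2^2+\|\tilde w^m_t\|_2^2)$ closes it with the same two bounds.
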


\begin{lemma}[Noise covariance]\label{lem:cov_noise}
There exists \(K_{\mathrm{cov},m}(\delta)=O(\log\log(1/\delta))\) such that for every
\(k\ge K_{\mathrm{cov},m}(\delta)\), it holds for some event \(E_{\mathrm{cov},k}\) with
\(\mathbb{P}(E_{\mathrm{cov},k})\ge 1-\delta_k/4\) that
\[
\left\|\frac1{\tau_m^{(k)}}\!\sum_{t=\bar\tau_m^{(k-1)}}^{\bar\tau_m^{(k)}-1}
      \bigl(\tilde w_t^{m}(\tilde w_t^{m})^{\top}-\Sigma_{m}^{(k),*}\bigr)\right\|
\le C\eta_m^{(k)}(\delta).
\]
\end{lemma}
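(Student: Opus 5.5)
The plan is to split the noise into a stationary part and an asynchronous correction, and handle each separately. Recall from \eqref{def.tildeomega} that
\[
\tilde w^m_t=\omega_t+\sum_{j\neq m}\alpha_j^{(k_j(t))}B_jv^j_t
\]
has covariance \(\Sigma^*_{m,t}\). Write
\[
\frac1{\tau_m^{(k)}}\sum_{t}\bigl(\tilde w^m_t(\tilde w^m_t)^\top-\Sigma^{(k),*}_m\bigr)
=\frac1{\tau_m^{(k)}}\sum_t\bigl(\tilde w^m_t(\tilde w^m_t)^\top-\Sigma^*_{m,t}\bigr)
+\frac1{\tau_m^{(k)}}\sum_t\bigl(\Sigma^*_{m,t}-\Sigma^{(k),*}_m\bigr),
\]
where all sums run over \(t\in[\bar\tau_m^{(k-1)},\bar\tau_m^{(k)}-1]\).

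\textbf{Deterministic term.} Throughout we work conditionally on \(\mathcal T\), so all epoch boundaries and all \(\alpha_j^{(k_j(t))}\) are deterministic. The summand \(\Sigma^*_{m,t}-\Sigma^{(k),*}_m\) vanishes except at times where some opponent \(j\) is in a different epoch than the one indexed by \(k-1\). By Condition~\ref{condition:tauk_exp} the boundaries differ across players by at most \(O(\bar\tau+1)\) per epoch, and possibly by \(O(k)\) cumulatively. On those times the summand is bounded by \(2\bar\alpha^2\sum_j\|B_j\|^2\). Hence this term is \(O(k/\tau_m^{(k)})=O(k\lambda^{-k})\), which is dominated by \(\eta_m^{(k)}(\delta)\).

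\textbf{Martingale (in fact independent) term.} Conditionally on \(\mathcal T\), the vectors \(\{\tilde w^m_t\}_t\) are independent, mean zero and sub-Gaussian. Their \(\psi_2\)-norms are bounded uniformly by a constant \(K_w\), since \(\alpha\le\bar\alpha\) and \(\omega_t,v^j_t\) have fixed sub-Gaussian norms. They are not identically distributed, so rather than invoking the i.i.d.\ covariance estimate verbatim, I would reprove the relevant form of Theorem~4.6.1 of \cite{vershynin2018high}. Fix \(u\in\mathbb S^{n-1}\). The scalars \((u^\top\tilde w^m_t)^2-u^\top\Sigma^*_{m,t}u\) are independent, centered and sub-exponential with \(\psi_1\)-norm at most \(CK_w^2\). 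Bernstein's inequality then gives, for \(s>0\),
\[
\mathbb P\Bigl(\Bigl|\tfrac1{\tau}\textstyle\sum_t\cdots\Bigr|>s\Bigr)\le2\exp\bigl(-c\tau\min(s^2,s)\bigr).
\]
A \(1/4\)-net of \(\mathbb S^{n-1}\), of cardinality \(9^n\), upgrades this to the operator norm at the cost of a factor 2. Choosing \(s\) so the total failure probability equals \(\delta_k/4\), we get with probability at least \(1-\delta_k/4\)
\[
\Bigl\|\cdot\Bigr\|\le C\Bigl(\sqrt{\tfrac{n+\log(1/\delta_k)}{\tau_m^{(k)}}}+\tfrac{n+\log(1/\delta_k)}{\tau_m^{(k)}}\Bigr).
\]

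\textbf{Matching \(\eta_m^{(k)}(\delta)\).} We have \(\log(1/\delta_k)\le C(\log(k+1)+\log(1/\delta))\le C(k+\log(1/\delta))\), and \(\tau_m^{(k)}\ge\tau\lambda^k\ge\tau\lambda^{(1-2\nu_m)k}\). So the square-root term is at most \(C\eta_m^{(k)}(\delta)\). The linear term is bounded by the square-root term as soon as \(\tau_m^{(k)}\ge n+\log(1/\delta_k)\), i.e.\ \(\tau\lambda^k\gtrsim k+\log(1/\delta)\). This holds for all \(k\ge K_{\mathrm{cov},m}(\delta)\) with \(K_{\mathrm{cov},m}(\delta)=O(\log\log(1/\delta))\), and the same threshold also absorbs the \(O(k\lambda^{-k})\) deterministic term.

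I expect the main obstacle to be bookkeeping rather than concentration. One must check carefully that, after conditioning on \(\mathcal T\), the noises \(\tilde w^m_t\) are genuinely independent of each other. This needs the opponents' exploration scales \(\alpha_j^{(k_j(t))}\) to be deterministic, which they are given \(\mathcal T\); it does not require the gains \(F_j\) to be deterministic, since \(\tilde w^m_t\) does not involve them. One must also bound the number of asynchronous time steps per epoch uniformly in the realization of the shifts.
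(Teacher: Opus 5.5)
Your proof is correct, but it organizes the argument differently from the paper's proof.

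\textbf{The paper's route.} The paper splits player $m$'s $k$-th epoch into two pieces.
\begin{itemize}
\item On the synchronous phase $S^{(k)}$, all players run fixed policies. The $\tilde w_t^m$ are therefore i.i.d.\ with covariance exactly $\Sigma_m^{(k),*}$, and Theorem~4.6.1 of \cite{vershynin2018high} is applied verbatim to the whitened vectors.
\item On the asynchronous tail $T_m^{(k)}$, of length $O(k)$, it invokes a sub-Gaussian maximal inequality, $\max_t\|\tilde w_t^m\|_2^2\le C\log(\tau_m^{(k)}/\delta_k)$, on a second event. This gives a tail contribution of $Ck\log(\tau_m^{(k)}/\delta_k)/\tau_m^{(k)}$.
\end{itemize}

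\textbf{Your route.} You instead center each outer product at its own time-varying covariance $\Sigma^*_{m,t}$. You then run the Bernstein-plus-net argument for independent, non-identically distributed sub-Gaussian vectors over the whole epoch at once. Asynchrony then enters only through the deterministic bias $\frac{1}{\tau_m^{(k)}}\sum_t(\Sigma^*_{m,t}-\Sigma_m^{(k),*})$. That bias is $O(k/\tau_m^{(k)})$ simply by counting the times at which some opponent is in a different epoch.

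\textbf{What each buys.} The paper's decomposition lets it cite the i.i.d.\ covariance theorem without modification. Yours yields a single high-probability event, and the tail term carries no logarithmic factor and consumes none of the failure budget. The cost is re-deriving the standard concentration bound for non-identically distributed vectors, which you do correctly.

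\textbf{Shared ingredients.} Both arguments rest on the two facts you single out:
\begin{itemize}
\item Conditional on $\mathcal{T}$, the $\tilde w_t^m$ are independent across $t$. They involve the opponents' exploration scales, which are $\mathcal{T}$-measurable, but not their random gains.
\item The number of asynchronous steps per epoch is $O(k)$, uniformly over realizations of the shifts.
\end{itemize}
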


\begin{lemma}[Self‑normalised noise bounds]\label{lem:noise}
For each \(k\ge1\), it holds for some event \(E_{\omega,k}\) with
\(\mathbb{P}(E_{\omega,k})\ge 1-\delta_k/4\) that
\begin{align}
\left\|(V_m^{(k)})^{-1}\!\sum_{t=\bar\tau_m^{(k-1)}}^{\bar\tau_m^{(k)}-1}
      z_t^{m}(\tilde w_t^{m})^{\top}\right\|
&\le C\,\sqrt{\frac{\log\det(V_m^{(k)})+\log(1/\delta_k)}
                 {\sigma_{\min}(V_m^{(k)})}}, \label{eq:noise_selfnorm}\\[4pt]
\left\|\frac1{\tau_m^{(k)}}\!\sum_{t=\bar\tau_m^{(k-1)}}^{\bar\tau_m^{(k)}-1}
      z_t^{m}(\tilde w_t^{m})^{\top}\right\|
&\le C\,\sqrt{\frac{(\log\det(V_m^{(k)})+\log(1/\delta_k))\|V_m^{(k)}\|}
                 {(\tau_m^{(k)})^{2}}}. \label{eq:noise_av}
\end{align}
\end{lemma}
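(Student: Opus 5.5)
The plan is to derive both inequalities from one self-normalised martingale bound, namely Lemma~E.2 of \cite{simchowitz2020naive} (equivalently Theorem~14.7 of \cite{de2009self}), applied to the epoch-$k$ regression of player~$m$. Throughout we work conditionally on $\mathcal{T}$, so that $\bar\tau_m^{(k-1)}$, $\bar\tau_m^{(k)}$ and all the exploration scales $\alpha_j^{(k_j(t))}$ are deterministic.

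\textbf{Step 1: martingale structure.} Let $\mathcal{F}_t$ be the $\sigma$-algebra generated by $\mathcal{T}$, $\{\omega_s\}_{s<t}$ and $\{v^j_s\}_{s<t,\,1\le j\le M}$, together with $v^m_t$. From \eqref{def.tildeomega} and \eqref{state} we have
\[
\tilde w^m_t=\omega_t+\sum_{j\ne m}\alpha_j^{(k_j(t))}B_j v^j_t .
\]
\begin{itemize}
\item $z^m_t$ is $\mathcal{F}_t$-measurable, because $x_t$ and $u^{m}_t$ depend only on past noises, on $v^m_t$, and on strategy parameters fixed at earlier update times.
\item $\tilde w^m_t$ is independent of $\mathcal{F}_t$ and is $\mathcal{F}_{t+1}$-measurable.
\item $\tilde w^m_t$ is mean zero and sub-Gaussian, with variance proxy $\sigma^2$ uniform in $t$. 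This holds because $\alpha_j^{(\cdot)}\le\bar\alpha$ and the $B_j$ are fixed.
\end{itemize}
Hence $S_k:=\sum_{t=\bar\tau_m^{(k-1)}}^{\bar\tau_m^{(k)}-1}z^m_t(\tilde w^m_t)^\top$ is a matrix-valued martingale sum with respect to the time-shifted filtration $\{\mathcal{F}_{\bar\tau_m^{(k-1)}+s}\}_{s\ge0}$. Its normalising matrix is exactly $V_m^{(k)}=\beta_m I+\sum z^m_t(z^m_t)^\top$.

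\textbf{Step 2: self-normalised bound.} Fix a coordinate direction $e_i\in\mathbb{R}^n$. The scalar noise $\langle e_i,\tilde w^m_t\rangle$ is conditionally $\sigma$-sub-Gaussian. The self-normalised inequality, applied with confidence $\delta_k/(4n)$, then gives
\[
\|(V_m^{(k)})^{-1/2}S_ke_i\|_2^2\le 2\sigma^2\Big(\tfrac12\log\det V_m^{(k)}-\tfrac{1+n+d_m}{2}\log\beta_m+\log\tfrac{4n}{\delta_k}\Big).
\]
A union bound over $i=1,\dots,n$ defines $E_{\omega,k}$ with $\mathbb{P}(E_{\omega,k})\ge1-\delta_k/4$. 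Using $\|M\|\le\sqrt n\max_i\|Me_i\|_2$, we obtain on this event
\[
\|(V_m^{(k)})^{-1/2}S_k\|\le C\sqrt{\log\det V_m^{(k)}+\log(1/\delta_k)} .
\]
The additive constants $\log(4n)$ and $-(1+n+d_m)\log\beta_m$ can be absorbed into $C$, because $\log(1/\delta_k)\ge\log(\pi^2/6)>0$ is bounded below. The same works for $\log\det V_m^{(k)}\ge(1+n+d_m)\log\beta_m$ whenever the log-det term could be negative (for instance, one may take $\beta_m\ge1$ without loss, or enlarge $C$).

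\textbf{Step 3: the two stated forms.} Both follow by factoring through $(V_m^{(k)})^{\mp1/2}$.
\begin{itemize}
\item For \eqref{eq:noise_selfnorm}, write $\|(V_m^{(k)})^{-1}S_k\|\le\|(V_m^{(k)})^{-1/2}\|\,\|(V_m^{(k)})^{-1/2}S_k\|$ and use $\|(V_m^{(k)})^{-1/2}\|=\sigma_{\min}(V_m^{(k)})^{-1/2}$.
\item For \eqref{eq:noise_av}, write $\|S_k\|\le\|V_m^{(k)}\|^{1/2}\|(V_m^{(k)})^{-1/2}S_k\|$ and divide by $\tau_m^{(k)}$.
\end{itemize}

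\textbf{Main obstacle.} The only delicate point is Step~1: checking that the misspecification noise $\tilde w^m_t$ is a genuine martingale difference adapted as required. This needs, first, that the other players' exploration scales and update times are predictable, which is guaranteed by conditioning on $\mathcal{T}$. It also needs that the drift mismatch from asynchronous updates of opponents does not enter $\tilde w^m_t$; it does not, since it is placed in the time-varying $\Theta^*_{m,t}$ rather than in the noise. Once this is verified, the lemma is a direct application of the cited inequality.
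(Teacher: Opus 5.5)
Your proposal is correct and follows the paper's route. The paper gives no argument beyond calling the lemma a direct consequence of Lemma~E.2 of \cite{simchowitz2020naive}; you have supplied the omitted details, namely the filtration/adaptedness check, the coordinatewise union bound, and the two factorizations through $(V_m^{(k)})^{\mp1/2}$.

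One small caveat concerns the case $\beta_m<1$ (the experiments use $\beta_m=0.01$). Resetting $\beta_m\ge1$ is not ``without loss,'' because $\beta_m$ is an algorithm parameter. Enlarging $C$ also cannot rescue the bound on outcomes where $\log\det V_m^{(k)}+\log(1/\delta_k)\le 0$. The clean fix is to keep $\log\det(V_m^{(k)}/\beta_m)$, which is what the self-normalised inequality actually delivers. Alternatively, use the bound only for large $k$, where $\log\det V_m^{(k)}$ is large on $E_{V,k}$; that is all the paper needs.
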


For the rest of this section we set
\(K_{\mathrm{joint},m}(\delta):=\max\{K_{V,m},K_{A,m},K_{\mathrm{cov},m}\}=O(\log\log(1/\delta))\)
and define, for each \(k\ge K_{\mathrm{joint},m}(\delta)\), the joint good event
\(
E_k = E_{V,k}\cap E_{A,k}\cap E_{\omega,k}\cap E_{\mathrm{cov},k}.
\)
By the union bound, \(\mathbb{P}(E_k)\ge 1-\delta_k\).
We now show that on \(E_k\) both the drift parameter error
\(\|\hat\Theta_m^{(k)}-\Theta_m^{(k),*}\|\) and the covariance error
\(\|\hat\Sigma_m^{(k)}-\Sigma_m^{(k),*}\|\) are bounded by
\(C\eta_m^{(k)}(\delta)\).

\paragraph*{Error bound for \(\hat\Theta^{(k)}_m\).}
By the definitions of \(\hat{\Theta}_{m}^{(k)}\) in \eqref{def.hatTheta}, \(\Theta_{m}^{(k),*}\) in \eqref{def.Thetamkstar}, and \eqref{def.tildeomega}, \begin{eqnarray}\label{decomposition.diffTheta}
\nonumber \hat{\Theta}_{m}^{(k)} - \Theta_{m}^{(k),*} 
&=&\underbrace{- \beta_m (V_m^{(k)})^{-1} \Theta_{m}^{(k),*}}_{I_1}+ \underbrace{(V_m^{(k)})^{-1} \sum_{t=\bar{\tau}^{(k-1)}_m}^{\bar{\tau}^{(k)}_m-1} z^m_t (z^m_t)^{\top} (\Theta_{m,t}^{*} - \Theta_{m}^{(k), *})^{\top}}_{I_2}  \\
&&+ \underbrace{(V_m^{(k)})^{-1} \sum_{t=\bar{\tau}^{(k-1)}_m}^{\bar{\tau}^{(k)}_m-1} z^m_t (\tilde{w}^m_t)^{\top}}_{I_3}.
\end{eqnarray}

We analyze each term on 
 \(E_k\):
\begin{itemize}
\item  By the eigenvalue lower bound in Lemma~\ref{lem:V_eigen},
      \(\|I_1\|\le \frac{\beta_m\|\Theta_m^{(k),*}\|}{\sigma_{\min}(V_m^{(k)})}
      \le C/[\tau_m^{(k)}(\alpha_m^{(k-1)})^{2}]\le C\eta_m^{(k)}(\delta)\).
\item By Lemma~\ref{lemma.diffTheta1} and the eigenvalue bound,
      \(\|I_2\|\le \frac{C(k^{2}+\log(1/\delta))}{\sigma_{\min}(V_m^{(k)})}
      \le C\frac{k^{2}+\log(1/\delta)}{\tau_m^{(k)}(\alpha_m^{(k-1)})^{2}}\).
      Because \(\tau_m^{(k)}(\alpha_m^{(k-1)})^{2}\asymp\lambda^{(1-2\nu_m)k}\)
      grows exponentially with exponent \(1-2\nu_m>\frac12-\nu_m\), while the
      numerator is polynomial in \(k\) and \(\log(1/\delta)\), there exists
      \(K_{\Theta,m}(\delta)=O(\log\log(1/\delta))\) such that for all
      \(k\ge K'_{\Theta,m}(\delta)\) this term is bounded by
      \(C\eta_m^{(k)}(\delta)\).
\item Substituting the eigenvalue bounds of Lemma~\ref{lem:V_eigen} into
      \eqref{eq:noise_selfnorm} gives \(\|I_3\|\le C\eta_m^{(k)}(\delta)\).
\end{itemize}
Hence, for every \(k\ge \max\{K_{\mathrm{joint},m}(\delta),K_{\Theta,m}(\delta)\}\),
on the event \(E_k\),
\begin{equation}\label{bound.Theta}
\|\hat\Theta_m^{(k)}-\Theta_m^{(k),*}\|\le C\eta_m^{(k)}(\delta).
\end{equation}

\paragraph*{Error bound for \(\hat\Sigma^{(k)}_m\).}
By the definitions of \(\hat{\Sigma}_{m}^{(k)}\) in \eqref{def.hatSigma}, \(\Sigma_{m}^{(k),*}\) in \eqref{def.Thetamkstar}, and \eqref{def.tildeomega}, 
\begin{equation}\label{decomposition.hatSigma}
        \hat{\Sigma}^{(k)}_{m}-\Sigma^{(k),*}_{m}= \Lambda_1 + \Lambda_2 + \Lambda_3+\Lambda_4 + \Lambda_5+\Lambda_6+(\Lambda_4 + \Lambda_5+\Lambda_6)^\top,
\end{equation}
where
\begin{equation*}
\Lambda_1 = (\Theta_m^{(k),*} - \hat{\Theta}_m^{(k)})^\top\left(\frac{1}{\tau_m^{(k)}} \sum\limits_{t=\bar{\tau}^{(k-1)}_m}^{\bar{\tau}^{(k)}_m-1}  z_t^m (z^m_t)^{\top}\right) (\Theta_m^{(k),*} - \hat{\Theta}_m^{(k)}), \quad \Lambda_2 = \frac{1}{\tau_m^{(k)}} \sum\limits_{t=\bar{\tau}^{(k-1)}_m}^{\bar{\tau}^{(k)}_m-1} \tilde{w}_t^m (\tilde{w}_t^m)^\top-\Sigma^{(k),*}_{m},
\end{equation*}
\begin{equation*}
\Lambda_3 = \frac{1}{\tau_m^{(k)}} \sum\limits_{t=\bar{\tau}^{(k-1)}_m}^{\bar{\tau}^{(k)}_m-1} (\Theta_{m,t}^* - \Theta_m^{(k),*})^\top z_t^m (z^m_t)^{\top} (\Theta_{m,t}^* - \Theta_m^{(k),*}),
\end{equation*}
\begin{equation*}
\Lambda_4 = \frac{1}{\tau_m^{(k)}} \sum\limits_{t=\bar{\tau}^{(k-1)}_m}^{\bar{\tau}^{(k)}_m-1} (\Theta_{m,t}^* - \Theta_m^{(k),*})^\top z_t^m (z^m_t)^{\top} (\Theta_m^{(k),*} - \hat{\Theta}_m^{(k)}),
\end{equation*}
\begin{equation*}
\Lambda_5 = \frac{1}{\tau_m^{(k)}} \sum\limits_{t=\bar{\tau}^{(k-1)}_m}^{\bar{\tau}^{(k)}_m-1} (\Theta_{m,t}^* - \Theta_m^{(k),*})^\top z_t^m (\tilde{w}_t^m)^\top, \quad \Lambda_6 = \frac{1}{\tau_m^{(k)}} \sum\limits_{t=\bar{\tau}^{(k-1)}_m}^{\bar{\tau}^{(k)}_m-1} (\Theta_m^{(k),*} - \hat{\Theta}_m^{(k)})^\top z_t^m (\tilde{w}_t^m)^\top.
\end{equation*}

We now bound the \(\Lambda_i\) on \(E_k\).
\begin{itemize}
\item \(\Lambda_2\) is directly controlled by Lemma~\ref{lem:cov_noise}:
      \(\|\Lambda_2\|\le C\eta_m^{(k)}(\delta)\).

\item Asynchronous terms \(\Lambda_3,\Lambda_4,\Lambda_5\). 
      Using Lemma~\ref{lemma.diffTheta1} and the bound
      \(\|\Theta_{m,t}^* - \Theta_m^{(k),*}\|\le C\) and \(\|\hat\Theta_m^{(k)}-\Theta_m^{(k),*}\|\le C\eta_m^{(k)}(\delta)\),
      we obtain
\begin{equation}\label{bound.345}
    \|\Lambda_3\|+\|\Lambda_5\| \le \frac{C(k^{2}+\log(1/\delta))}{\tau_m^{(k)}}, \quad \|\Lambda_4\| \le \frac{C(k^{2}+\log(1/\delta))}{\tau_m^{(k)}}\eta_m^{(k)}(\delta).
\end{equation}

\item Remaining terms \(\Lambda_1,\Lambda_6\).
On \(E_{V,k}\) the empirical moment satisfies
\(\left\|\frac1{\tau_m^{(k)}}\sum\limits_{t=\bar{\tau}^{(k-1)}_m}^{\bar{\tau}^{(k)}_m-1}z_t^{m}(z_t^{m})^{\top}\right\|
 \le C\). Together with the cross term bound  in Lemma \ref{lem:noise} and \(\|\hat\Theta_m^{(k)}-\Theta_m^{(k),*}\|\le C\eta_m^{(k)}(\delta)\) gives that 
\begin{equation}\label{bound.16}
      \|\Lambda_1\|
      \le C
         \bigl(\eta_m^{(k)}(\delta)\bigr)^{2}, \quad 
      \|\Lambda_6\|
      \le C\sqrt{\frac{\log(\tau_m^{(k)})+\log(1/\delta)}{\tau_m^{(k)}}}\eta_m^{(k)}(\delta).
\end{equation}

\end{itemize}
As with \(I_2\) in the analysis of \(\hat\Theta^{(k)}_m\), the exponential growth of the denominator guarantees the
      existence of \(K_{\Sigma,m}(\delta)=O(\log\log(1/\delta))\) such that
      all error bounds in \eqref{bound.345} and \eqref{bound.16} are bounded by \(C\eta_m^{(k)}(\delta)\) for all
      \(k\ge K_{\Sigma,m}(\delta)\). 
Thus,  \(k\ge \max\{K_{\mathrm{joint},m}(\delta),K_{\Sigma,m}(\delta)\}\), on the event $E_k$, we have 
\begin{equation}\label{bound.Sigma}
    \|\hat\Sigma_m^{(k)}-\Sigma_m^{(k),*}\|\le C\eta_m^{(k)}(\delta).
\end{equation}

\paragraph*{Proof of Proposition~\ref{prop.convergencyTheta}.}
Take \(K_m(\delta)=\max\bigl\{K_{\mathrm{joint},m}(\delta),\,
K_{\Theta,m}(\delta),\,K_{\Sigma,m}(\delta)\bigr\}
=O(\log\log(1/\delta))\). Define the event
\(E_m(\delta)=\bigcap_{k\ge K_m(\delta)} E_k\).
By the union bound,
\(\mathbb{P}(E_m(\delta)^{c})\le \sum_{k\ge K_m(\delta)}\mathbb{P}(E_k^{c})
 \le \sum_{k\ge K_m(\delta)}\delta_k \le \delta\).
On \(E_m(\delta)\), for all \(k\ge K_m(\delta)\) the estimates \eqref{bound.Theta} and \eqref{bound.Sigma} hold
simultaneously, giving
\(\max\{\|\hat\Theta_m^{(k)}-\Theta_m^{(k),*}\|,\,
        \|\hat\Sigma_m^{(k)}-\Sigma_m^{(k),*}\|\}
\le C\eta_m^{(k)}(\delta)\).

\subsection{Strategy Deviation of $(F^{(k)},f^{(k)})$}\label{appendix.Fferror}

We first translate the estimation error bounds obtained in Appendix~\ref{appendix.RLSerror} into bounds on the accepted parameters  
\((\tilde{\Theta}_{m}^{(k)},\tilde{\Sigma}_{m}^{(k)},\alpha_m^{(k)})\) relative to \((\Theta_{m}^{(k),*},\Sigma_{\omega},0)\).

\begin{lemma}\label{lemma.convergencyTheta}
Suppose Assumptions~\ref{ass.stationary_x} and~\ref{ass.uniform.Phimcontinuity} and Condition~\ref{condition:tauk_exp} hold. For any $\delta \in (0,1)$, there exist $K'_m(\delta)=O(\log\log(1/\delta))$ and a constant $C < \infty$ such that $\mathbb{P}(E'_m(\delta)) \ge 1-\delta$, where
\begin{equation*}
E'_m(\delta) = \Bigl\{ \forall k \ge K'_m(\delta),\; \max\bigl\{\|\tilde{\Theta}_{m}^{(k)} - \Theta_{m}^{(k),*}\|,\;\|\tilde{\Sigma}_{m}^{(k)} - \Sigma_{\omega}\|,\;\alpha_m^{(k)}\bigr\} \le C\bigl(\eta_m^{(k)}(\delta) \vee \max_{1\le j\le M} \{\alpha_j^{(k-1)}\}\bigr) \Bigr\}.
\end{equation*}
\end{lemma}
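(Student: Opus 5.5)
The plan is to work on the event \(E_m(\delta)\) of Proposition~\ref{prop.convergencyTheta} and combine three ingredients: the estimation bound, a deterministic bound on \(\|\Sigma_m^{(k),*}-\Sigma_\omega\|\), and the acceptance mechanism \eqref{eqn.fallback} via Assumption~\ref{ass.uniform.Phimcontinuity}(\ref{item:feasible}).

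First, by \eqref{def.Thetamkstar}, \(\|\Sigma_m^{(k),*}-\Sigma_\omega\|\le \sum_{j\ne m}(\alpha_j^{(k-1)})^2\|B_j\|^2\le C\max_j (\alpha_j^{(k-1)})^2\le C\max_j\alpha_j^{(k-1)}\), since \(\alpha_j^{(k-1)}\le\bar\alpha\). Combining this with Proposition~\ref{prop.convergencyTheta} through the triangle inequality gives, on \(E_m(\delta)\) and for \(k\ge K_m(\delta)\),
\[
\max\bigl\{\|\hat\Theta_m^{(k)}-\Theta_m^{(k),*}\|,\;\|\hat\Sigma_m^{(k)}-\Sigma_\omega\|\bigr\}\le C\bigl(\eta_m^{(k)}(\delta)\vee\max_j\alpha_j^{(k-1)}\bigr).
\]
For the third component, Condition~\ref{condition:tauk_exp} gives \(\alpha_m^{(k)}=\bar\alpha(\tau_m^{(k+1)})^{-\nu_m}\le \bar\alpha(\tau^{(k)})^{-\nu_m}\le C\alpha_m^{(k-1)}\). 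This holds because \(\tau_m^{(k+1)}\ge\tau^{(k+1)}-1\ge\tau^{(k)}\) for large \(k\), and \(\tau_m^{(k)}\le\tau^{(k)}+\bar\tau\le C\tau^{(k)}\). So \(\alpha_m^{(k)}\) is also dominated by the right-hand side.

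Next I pass from \(\hat{}\) to \(\tilde{}\) quantities. The right-hand side tends to zero deterministically in \(k\) for fixed \(\delta\): \(\eta_m^{(k)}(\delta)\le \sqrt{(k+\log(1/\delta))\lambda^{-(1-2\nu_m)k}}\) and \(\alpha_j^{(k-1)}\asymp\lambda^{-\nu_j k}\). Hence there is \(K''_m(\delta)\) such that the bound is below \(\delta_m\) for all \(k\ge K''_m(\delta)\). Solving \(\lambda^{ck}\gtrsim k+\log(1/\delta)\) shows \(K''_m(\delta)=O(\log\log(1/\delta))\).

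For such \(k\), the point \((\hat\Theta_m^{(k)},\hat\Sigma_m^{(k)})\) lies in the \(\delta_m\)-ball around \((\Theta_m^{(F^{(k-1)}_{-m},f^{(k-1)}_{-m})},\Sigma_\omega)=(\Theta_m^{(k),*},\Sigma_\omega)\). Assumption~\ref{ass.uniform.Phimcontinuity}(\ref{item:feasible}) holds uniformly in the opponents' strategies, so this point belongs to \(\mathcal S_m\). The fallback \eqref{eqn.fallback} therefore accepts it, giving \((\tilde\Theta_m^{(k)},\tilde\Sigma_m^{(k)})=(\hat\Theta_m^{(k)},\hat\Sigma_m^{(k)})\), and the bound transfers to the accepted pair. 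Setting \(K'_m(\delta)=\max\{K_m(\delta),K''_m(\delta)\}\) and \(E'_m(\delta)\supseteq E_m(\delta)\) gives \(\mathbb P(E'_m(\delta))\ge1-\delta\).

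The main subtlety I expect is the uniformity needed at two points. First, the acceptance argument requires the \(\delta_m\)-neighbourhood in Assumption~\ref{ass.uniform.Phimcontinuity} to be centred at the current effective parameter, which changes with \(k\). Second, in the asynchronous case, \(\Theta_m^{(k),*}\) must be defined using the opponents' strategies in force during player \(m\)'s epoch \(k\). I take both as given by the definitions used in Proposition~\ref{prop.convergencyTheta}, so no extra randomness enters beyond that event.
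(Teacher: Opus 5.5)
Your proposal is correct and follows essentially the same argument as the paper. You work on the event of Proposition~\ref{prop.convergencyTheta} and add the deterministic bound $\|\Sigma_m^{(k),*}-\Sigma_\omega\|\le C\max_{j\neq m}(\alpha_j^{(k-1)})^2$. You then choose $K''_m(\delta)=O(\log\log(1/\delta))$ so that the error lies in the $\delta_m$-ball of Assumption~\ref{ass.uniform.Phimcontinuity}(\ref{item:feasible}), conclude that the fallback accepts the estimate, and note that $E_m(\delta)\subseteq E'_m(\delta)$.

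The only differences are cosmetic. You convert $(\alpha_j^{(k-1)})^2$ into $\alpha_j^{(k-1)}$ before the acceptance step. You also derive $\alpha_m^{(k)}\le C\alpha_m^{(k-1)}$ explicitly from Condition~\ref{condition:tauk_exp}, which the paper only asserts.
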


\begin{proof}{Proof}
Let $E_m(\delta)$ be the event from Proposition~\ref{prop.convergencyTheta}.
On $E_m(\delta)$, for all $k\ge K_m(\delta)$ we have
$\|\hat{\Theta}_m^{(k)}-\Theta_m^{(k),*}\|\le C\eta_m^{(k)}(\delta)$ and
$\|\hat{\Sigma}_m^{(k)}-\Sigma_m^{(k),*}\|\le C\eta_m^{(k)}(\delta)$.
By definition of $\Sigma_{m}^{(k),*}$ in~\eqref{def.Thetamkstar},
$\|\Sigma_{m}^{(k),*}-\Sigma_{\omega}\| \le C \max\limits_{j\neq m}(\alpha_j^{(k-1)})^2$, so on the same event
\begin{equation}\label{bound.ThetaSigma}
\max\{\|\hat{\Theta}_{m}^{(k)} - {\Theta}_{m}^{(k),*}\|,\;
        \|\hat{\Sigma}_{m}^{(k)} - \Sigma_{\omega}\|\}
\le C\bigl(\eta_m^{(k)}(\delta) \vee \max\limits_{j\neq m} (\alpha_j^{(k-1)})^2\bigr).
\end{equation}
From the definition of $\eta_m^{(k)}(\delta)$ in \eqref{def.etak} and the fact that
$\alpha_j^{(k)}\asymp\lambda^{-\nu_j k}$, there exists
$K''_m(\delta)=O(\log\log(1/\delta))$ such that for all
$k\ge K''_m(\delta)$ the right‑hand side of \eqref{bound.ThetaSigma} is
smaller than the constant $\delta_m$ from
Assumption~\ref{ass.uniform.Phimcontinuity}.
Take $K'_m(\delta)=\max\{K_m(\delta),K''_m(\delta)\}$; then
$K'_m(\delta)=O(\log\log(1/\delta))$.
Part~1) of Assumption~\ref{ass.uniform.Phimcontinuity} then guarantees
$\tilde{\Theta}_m^{(k)}=\hat{\Theta}_m^{(k)}$ and
$\tilde{\Sigma}_m^{(k)}=\hat{\Sigma}_m^{(k)}$ for all $k\ge K'_m(\delta)$.

Finally, the deterministic bound
$\max\{\alpha_m^{(k)},\,\max\limits_{j\neq m}(\alpha_j^{(k-1)})^2\}
 \le C\max\limits_{1\le j\le M}\{\alpha_j^{(k-1)}\}$
holds for all $k$.
Let $E'_m(\delta)$ be the event $E_m(\delta)$, but with the statement
required only for $k\ge K'_m(\delta)$.
Since $K'_m(\delta)\ge K_m(\delta)$, we have
$\mathbb{P}(E'_m(\delta))\ge \mathbb{P}(E_m(\delta))\ge 1-\delta$.
\end{proof}

Now we consider the strategy profile $(F^{(k)},f^{(k)})$, by which we mean that every player uses the strategy $(F_m^{(k)},f_m^{(k)})$ obtained after her $k$-th update.  
Due to the presence of asynchronous shifts, this profile may differ from the profile $(F^{(k(t))},f^{(k(t))})$ that appears in Theorem~\ref{thm.convergencyne}; the two profiles coincide only during the synchronous phases.  
Here we first derive the deviation of $(F^{(k+1)},f^{(k+1)})$ from the collective best response $\Psi((F^{(k)},f^{(k)}))$ based on the error bound for the accepted parameters established above.  
This strategy deviation serves as an intermediate result for the proof of Theorem~\ref{thm.convergencyne}; the actual profile $(F^{(k(t))},f^{(k(t))})$ for arbitrary $t$ will be handled in the next section.

\begin{lemma}\label{lemma.LipFf}
Suppose Assumptions~\ref{ass.def.bestresponsemap}-\ref{ass.uniform.Phimcontinuity} and Condition~\ref{condition:tauk_exp} hold. For any $\delta \in (0,1)$, there exist $K(\delta)=O(\log\log(1/\delta))$ and $L_{Ff} < \infty$ such that $\mathbb{P}(E(\delta)) \ge 1-\delta$, where
\begin{equation*}
E(\delta) = \left\{ \forall k \ge K(\delta),\; \|(F^{(k+1)},f^{(k+1)}) - \Psi((F^{(k)},f^{(k)}))\|_{\mu,1,2} \le L_{Ff}\,\xi_{k+1}(\delta) \right\},
\end{equation*}
with $\xi_k(\delta)$ defined in \eqref{def.xidelta} and the norm $\|\cdot\|_{\mu,1,2}$ defined in Assumption~\ref{ass.Psi_contractive_selfmap}.
\end{lemma}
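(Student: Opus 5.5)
The plan is to combine Lemma~\ref{lemma.convergencyTheta} for every player with Assumption~\ref{ass.uniform.Phimcontinuity}(\ref{item:lipschitz}) and the identity \eqref{eq:equiv1} relating the idealized certainty-equivalent policy to the best response, and then sum over players in the $\|\cdot\|_{\mu,1,2}$ norm. First I will allocate the failure probability. I apply Lemma~\ref{lemma.convergencyTheta} to each player $m$ with confidence $\delta/M$. I then set $E(\delta)\supseteq\bigcap_m E'_m(\delta/M)$, which has probability at least $1-\delta$ by a union bound. I take $K(\delta)=\max_m K'_m(\delta/M)$, which is still $O(\log\log(1/\delta))$. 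Since $\log(M/\delta)\le C\log(1/\delta)$ for $\delta<1$, up to constants (and $\delta$ bounded away from $1$ being harmless after adjusting $C$), we have $\eta_m^{(k)}(\delta/M)\le C\eta_m^{(k)}(\delta)$. Everything is done conditionally on $\mathcal{T}$, as in the preceding subsection, so all epoch lengths are deterministic.

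On this event, for $k\ge K(\delta)$, I will apply the same reasoning at index $k+1$. I enlarge $K(\delta)$ if needed so that the right-hand side $C(\eta_m^{(k+1)}(\delta)\vee\max_j\alpha_j^{(k)})$ lies below $\delta_m$. Then $\Delta_m:=\max\{\|\tilde\Theta_m^{(k+1)}-\Theta_m^{(k+1),*}\|,\|\tilde\Sigma_m^{(k+1)}-\Sigma_\omega\|,\alpha_m^{(k+1)}\}\le\delta_m$. Here $\Theta_m^{(k+1),*}=\Theta_m^{(F^{(k)}_{-m},f^{(k)}_{-m})}$ with $(F^{(k)}_{-m},f^{(k)}_{-m})\in\mathcal A_{-m}\times\mathcal B_{-m}$. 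Because $\delta_m$ and $L_m$ in Assumption~\ref{ass.uniform.Phimcontinuity} are uniform in the opponents' strategies, part (\ref{item:lipschitz}) applies with the random opponents' profile plugged in. It gives $\|F_m^{(k+1)}-\Phi_m(\Theta_m^{(k+1),*},\Sigma_\omega,x,0)\|\le L_m\Delta_m$, and the same bound for $f_m^{(k+1)}$. This holds for the initial state $x=x_{\bar\tau_m^{(k+1)}}$, which is legitimate since the bound holds for every $x$.

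Next I invoke \eqref{eqn.relationship}: the complete-information cost against $(F_{-m}^{(k)},f_{-m}^{(k)})$ coincides with the perceived cost under $(\Theta_m^{(k+1),*},\Sigma_\omega)$ and $\alpha_m=0$. The minimizer of the latter is unique by Assumption~\ref{ass.uniform.Phimcontinuity}, and the former's is unique and $x$-independent by Assumption~\ref{ass.def.bestresponsemap}. Hence $(\Phi_m,\phi_m)(\Theta_m^{(k+1),*},\Sigma_\omega,x,0)=(\psi_m,\varphi_m)(F^{(k)}_{-m},f^{(k)}_{-m})$. Summing over $m$ with weight $\mu$ on the $F$ part yields
\[
\|(F^{(k+1)},f^{(k+1)})-\Psi((F^{(k)},f^{(k)}))\|_{\mu,1,2}\le (1+\mu)\sum_m L_m\Delta_m\le C\max_m\Bigl(\eta_m^{(k+1)}(\delta)\vee\max_j\alpha_j^{(k)}\Bigr).
\]

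Finally, I convert to $\xi_{k+1}(\delta)$ using Condition~\ref{condition:tauk_exp}. Since $\tau_m^{(k+1)}\asymp\lambda^{k+1}$ uniformly in the shifts, we get $\alpha_j^{(k)}=\bar\alpha(\tau_j^{(k+1)})^{-\nu_j}\le C\lambda^{-\nu_j(k+1)}\le C\lambda^{-(\min_m\nu_m)(k+1)}$. Likewise $\eta_m^{(k+1)}(\delta)\le\sqrt{k+1+\log(1/\delta)}\,\lambda^{-(\frac12-\max_m\nu_m)(k+1)}$. Both are dominated by $\xi_{k+1}(\delta)$, which sets $L_{Ff}$.

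The main subtlety I expect is a careful index bookkeeping issue in the asynchronous case. $\Theta_m^{(k+1),*}$ must be the parameter induced by the opponents' $k$-th strategies, which Proposition~\ref{prop.convergencyTheta} targets, with mismatch time absorbed by Lemma~\ref{lemma.diffTheta1}. I also need the covariance bias from $\Sigma_m^{(k+1),*}$ to involve $\alpha_j^{(k)}$ rather than a later index. I will check this against the definition \eqref{def.Thetamkstar} and otherwise accept a harmless shift of constants.
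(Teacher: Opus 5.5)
Your proposal is correct and is essentially the paper's proof: take $K(\delta)=\max_m K'_m(\delta/M)$, work on $\bigcap_m E'_m(\delta/M)$ from Lemma~\ref{lemma.convergencyTheta}, identify $(\Phi_m,\phi_m)(\Theta_m^{(k+1),*},\Sigma_\omega,x,0)$ with the best response $(\psi_m,\varphi_m)(F^{(k)}_{-m},f^{(k)}_{-m})$ via \eqref{eqn.relationship}, and sum the opponent-uniform Lipschitz bounds of Assumption~\ref{ass.uniform.Phimcontinuity} over players, with weight $\mu$ on the $F$-part. Your extra bookkeeping makes explicit what the paper leaves implicit: you enlarge $K(\delta)$ so that $\alpha_m^{(k+1)}\le\delta_m$ as well, and you replace $\eta_m^{(k+1)}(\delta/M)$ by $C\eta_m^{(k+1)}(\delta)$; the latter is cleanest via $k+\log M+\log(1/\delta)\le(1+\log M)(k+\log(1/\delta))$ for $k\ge1$, rather than by keeping $\delta$ away from $1$.
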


\begin{proof}{Proof}
Set $K(\delta) = \max\limits_m K'_m(\delta/M)$, where $K'_m(\cdot)$ is from Lemma~\ref{lemma.convergencyTheta}; then $K(\delta)=O(\log\log(1/\delta))$.
By definition, $\eta_m^{(k)}(\delta) \vee \max\limits_{1\le j\le M}\{\alpha_j^{(k-1)}\} \le C\,\xi_k(\delta)$ for all $k\ge1$.
Since $\Theta_{m}^{(k+1),*} = \Theta^{(F^{(k)}_{-m},f^{(k)}_{-m})}_m$ by~\eqref{def.Thetamkstar},
the equivalence in Assumption~\ref{ass.def.bestresponsemap} gives
\[
\psi(F^{(k)}_{-m},f^{(k)}_{-m}) = \Phi_m(\Theta_{m}^{(k+1),*},\Sigma_{\omega},x,0),\qquad
\varphi(F^{(k)}_{-m},f^{(k)}_{-m}) = \phi_m(\Theta_{m}^{(k+1),*},\Sigma_{\omega},x,0).
\]
On the event $\bigcap_{m=1}^M E'_m(\delta/M)$, for all $k \ge K(\delta)$,
\begin{align*}
\|(F^{(k+1)},f^{(k+1)}) - \Psi((F^{(k)},f^{(k)}))\|_{\mu,1,2}
&= \mu\sum_{m=1}^{M}\|F_m^{(k+1)} - \Phi_m(\Theta_{m}^{(k+1), *},\Sigma_{\omega},x,0)\|\\
&\quad + \sum_{m=1}^{M}\|f_m^{(k+1)} - \phi_m(\Theta_{m}^{(k+1), *},\Sigma_{\omega},x,0)\|_2 \\
&\le C\Bigl(\sum_{m=1}^{M} L_m\Bigr)(\mu+1)\,\xi_{k+1}(\delta),
\end{align*}
where we used Lemma~\ref{lemma.convergencyTheta} and Assumption~\ref{ass.uniform.Phimcontinuity}. Taking $L_{Ff} = C(\sum_m L_m)(\mu+1)$ gives $\bigcap_{m=1}^M E'_m(\delta/M) \subset E(\delta) $. The probability estimate follows from
$\mathbb{P}(E(\delta)) \ge \mathbb{P}(\bigcap_{m=1}^M E'_m(\delta/M)) \ge 1-\sum_{m=1}^M\delta/M \ge 1-\delta$.
\end{proof}

\subsection{Proof of Theorem~\ref{thm.convergencyne}}\label{appendix.proofThm1}

Let \((F^*,f^*)\) denote the unique feedback Nash equilibrium guaranteed by
Assumptions~\ref{ass.def.bestresponsemap}-\ref{ass.Psi_contractive_selfmap},
and let \((\mu,\zeta)\) be the corresponding constants.

\noindent\textbf{Step~1.} We first show that with high probability \((F^{(k)},f^{(k)})\)
converges to \((F^*,f^*)\).
Set \(a_k = \|(F^{(k)},f^{(k)}) - (F^*,f^*)\|_{\mu,1,2}\).
By Lemma~\ref{lemma.LipFf} and Assumption~\ref{ass.Psi_contractive_selfmap},
for any \(\delta\in(0,1)\) there exists \(K(\delta)=O(\log\log(1/\delta))\) such that,
with probability at least \(1-\delta\), for all \(k\ge K(\delta)\),
\[
a_{k+1}\le \zeta a_k + L_{Ff}\,\xi_{k+1}(\delta).
\]

Define the constants
\[
r_1 = \frac{\lambda^{-(\frac12-\max\limits_m\{\nu_m\})}}{\zeta},\qquad
r_2 = \frac{\lambda^{-\min\limits_m\{\nu_m\}}}{\zeta}.
\]
Let \(c_{K(\delta)}=2\sum_{m=1}^M(\kappa_m+\kappa'_m)\) and
\(c_{k+1}=\zeta c_k+L_{Ff}\,\xi_{k+1}(\delta)\).
Unfolding the recursion gives
\begin{equation*}\label{eq:ck_recursion}
c_k = \underbrace{\zeta^{k-K(\delta)}c_{K(\delta)}}_{\text{Term I (transient)}}
     + L_{Ff}\zeta^k\sum_{j=K(\delta)+1}^{k}
       \underbrace{\sqrt{j+\log(1/\delta)}\;r_1^{\,j}}_{\text{Term II (estimation error)}}
     + L_{Ff}\zeta^k\sum_{j=K(\delta)+1}^{k}
       \underbrace{r_2^{\,j}}_{\text{Term III (exploration noise)}} .
\end{equation*}

Term~I: since \(K(\delta)=O(\log\log(1/\delta))\) and \(\zeta<1\),
\(\zeta^{-K(\delta)}\le (\log(1/\delta))^{c_0}\) for some \(c_0>0\), hence
Term~I = \(O\bigl((\log(1/\delta))^{c_0}\,\zeta^{k}\bigr)\).

Term~II: write \(\sqrt{j+\log(1/\delta)}\le \sqrt{k+\log(1/\delta)}\) for all
\(j\le k\).  If \(r_1<1\) the series \(\sum_{j} r_1^{\,j}\) converges, giving
Term~II = \(O(\sqrt{k+\log(1/\delta)}\,\zeta^{k})\); if \(r_1=1\) then
Term~II = \(O(k\sqrt{k+\log(1/\delta)}\,\zeta^{k})\); if \(r_1>1\) the last term
dominates and Term~II = \(O(\sqrt{k+\log(1/\delta)}\,\lambda^{-(\frac12-\max\limits_m\{\nu_m\})k})\).
In all cases the dependence on \(\delta\) is at most \(O(\sqrt{\log(1/\delta)})\).

Term~III: this term does not involve \(\delta\).
If \(r_2<1\) it is \(O(\zeta^{k})\);
if \(r_2=1\) it is \(O(k\zeta^{k})\);
if \(r_2>1\) it is \(O(\lambda^{-\min\limits_m\{\nu_m\} k})\).

Since \(a_k\le c_k\) by induction, \(a_k\) enjoys the same bounds.
Consequently \(a_k\to0\) with probability at least \(1-\delta\), and for
\(k\ge K(\delta)\) and each player \(m\),
\begin{equation}\label{inequality.ak}
\mu\|F_m^{(k)}-F_m^*\|+\|f_m^{(k)}-f_m^*\|_2 \le a_k .
\end{equation}

\noindent\textbf{Step~2.}
We now translate the rates to calendar time \(t\). 
By Condition~\ref{condition:tauk_exp}, we have
\(\tau^{(k)} \le \tau_m^{(k)} \le \tau^{(k)} + \bar{\tau}\).
Summing over epochs gives constants \(0<c\le C<\infty\) such that
\(c\lambda^{k} \le \bar{\tau}_m^{(k)} \le C\lambda^{k}\) and
\(k_m(t)=\log_{\lambda}t+O(1)\) for all \(m\) and all large \(k\).
Choose \(T(\delta)\) so that \(k_m(t)\ge K(\delta)\) for all \(t\ge T(\delta)\)
and every \(m\); the threshold satisfies
\(T(\delta)=O\bigl((\log(1/\delta))^{c_1}\bigr)\) for some \(c_1>0\).

By~\eqref{inequality.ak}, for \(t\ge T(\delta)\),
\[
\|(F^{(k(t))},f^{(k(t))})-(F^*,f^*)\|_{\mu,1,2}
\le M\max_{1\le m\le M}\{a_{k_m(t)}\}.
\]
Substituting the three bounds from Step~1 and converting epoch indices to
time via
\(\zeta^{k_m(t)}\asymp t^{\ln\zeta/\ln\lambda}\),
\(\lambda^{-(\frac12-\max\limits_m\{\nu_m\})k_m(t)}\asymp t^{-(\frac12-\max\limits_m\{\nu_m\})}\),
\(\lambda^{-\min\limits_m\{\nu_m\}k_m(t)}\asymp t^{-\min\limits_m\{\nu_m\}}\), we obtain
\[
\|(F^{(k(t))},f^{(k(t))})-(F^*,f^*)\|_{\mu,1,2}
= O\bigl( \max\{ R_1(t,\delta),\; R_2(t,\delta),\; R_3(t) \} \bigr),
\]
where \(R_1,R_2,R_3\) are defined in \eqref{def.R123}.

\noindent\textbf{Step~3.}
The estimates in Steps~1-2 are derived under the conditional probability
\(\mathbb{P}(\cdot\mid\mathcal{T})\) and show that for any \(\delta\in(0,1)\)
there exists \(T(\delta)\) such that the claimed bounds hold for all
\(t\ge T(\delta)\) with \(\mathbb{P}(\cdot\mid\mathcal{T})\)-probability at
least \(1-\delta\).
To lift this to an unconditional statement, observe that all constants
appearing in the bounds depend only on model parameters and the parameters
in Condition~\ref{condition:tauk_exp}, hence are uniform over all
realizations of the shifts.  The thresholds in the auxiliary lemmas are chosen by making certain
tail probabilities smaller than a prescribed fraction of \(\delta\); because the tail conditions involve
only the deterministic lower bound \(\tau^{(k)}\le\tau_m^{(k)}\), the
resulting thresholds work simultaneously for all realizations of
\(\mathcal{T}\).  Taking expectation over \(\mathcal{T}\) yields the same
bounds under the unconditional probability \(\mathbb{P}\).

\noindent\textbf{Step~4 (Almost‑sure convergence with rate).}
From Steps~1-3, for any \(\delta\in(0,1)\) there exist constants
\(C<\infty\) and \(T(\delta)\in\mathbb N\) such that, with probability at
least \(1-\delta\), for all \(t\ge T(\delta)\),
\begin{equation}\label{eq:highprob_final}
\|(F^{(k(t))},f^{(k(t))})-(F^*,f^*)\|_{\mu,1,2}
\le C\max\{ R_1(t,\delta),\, R_2(t,\delta),\, R_3(t) \},
\end{equation}
where \(R_1,R_2,R_3\) are defined in Step~2.

For each \(j\in\mathbb N\) set \(\delta_j=2^{-j}\) and let
\(\mathcal E_j\) be the event on which the bound in
\eqref{eq:highprob_final} holds with \(\delta=\delta_j\) for all
\(t\ge T(\delta_j)\).  By construction \(\mathbb{P}(\mathcal E_j)\ge 1-2^{-j}\).
Since \(\sum_{j=1}^{\infty}\mathbb{P}(\mathcal E_j^{c})\le\sum_{j=1}^{\infty}2^{-j}<\infty\),
the Borel–Cantelli lemma implies that
\(\mathbb{P}\bigl(\bigcap_{J=1}^{\infty}\bigcup_{j=J}^{\infty}\mathcal E_j^{c}\bigr)=0\).
Equivalently, letting \(\mathcal E = \bigcup_{J=1}^{\infty}\bigcap_{j=J}^{\infty}\mathcal E_j\),
we have \(\mathbb{P}(\mathcal E)=1\).  On the almost‑sure event \(\mathcal E\),
there exists a (random) index \(j_0\) such that \(\mathcal E_j\) occurs for
every \(j\ge j_0\).

Set
\(
q = \min\!\Bigl\{-\tfrac{\ln\zeta}{\ln\lambda},\;
                    \tfrac12-\max\limits_m\{\nu_m\},\;
                    \min\limits_m\{\nu_m\}\Bigr\} > 0 .
\)
Now observe that, for any fixed \(j\),
\[
\frac{t^{q}}{(\log t)^{\frac32}\log\log t}\max\{ R_1(t,\delta_j),\, R_2(t,\delta_j),\, R_3(t) \} \to 0
\qquad\text{as } t\to\infty.
\]
Consequently, on the
almost‑sure event \(\mathcal E\), for any \(\varepsilon>0\) we can first choose \(j\)
large enough so that \(j\ge j_0\) and then choose \(t\) large enough so
that the right‑hand side of \eqref{eq:highprob_final} multiplied by
\(\frac{t^{q}}{(\log t)^{\frac32}\log\log t}\) is smaller than \(\varepsilon\).  Hence
\[
\lim_{t\to\infty} \frac{t^{q}}{(\log t)^{\frac32}\log\log t}\,
\|(F^{(k(t))},f^{(k(t))})-(F^*,f^*)\|_{\mu,1,2} = 0
\qquad\text{almost surely}.
\]
This shows that the strategy profile converges almost surely to the
feedback Nash equilibrium \((F^*,f^*)\) at the rate
\(O\left(t^{-q}(\log t)^{\frac32}\log\log t\right)\).

\subsection{Proofs of Auxiliary Lemmas in Appendix~\ref{appendix.RLSerror}}\label{appendix.additionalproofs}

\subsubsection{Proof of Lemma~\ref{lem:V_eigen}}

We first recall a purely algebraic estimate that will be used repeatedly.

\begin{lemma}\label{lemma.minieigenvalue}
    Let $H$ be a $d \times d$ symmetric positive semidefinite matrix. Suppose there exist matrices $\Gamma_1 \in \mathbb{R}^{d \times p}$ and $\Gamma_2 \in \mathbb{R}^{d \times q}$ with $p+q = d$ such that $[\Gamma_1 \ \Gamma_2] \in \mathbb{R}^{d \times d}$ is invertible and
    \begin{equation*}
    [\Gamma_1 \ \Gamma_2]^\top H [\Gamma_1 \ \Gamma_2] = \begin{bmatrix} H_0 & 0_{p \times q} \\ 0_{q \times p} & H_1 \end{bmatrix},
    \end{equation*}
    where $H_0 = \Gamma_1^\top H \Gamma_1 \in \mathbb{R}^{p \times p}$ and $H_1 = \Gamma_2^\top H \Gamma_2 \in \mathbb{R}^{q \times q}$ are symmetric positive definite. Then
    \begin{equation*}
    \sigma_{\min}(H) \ge \frac{\sigma_{\min}(H_0) \land \sigma_{\min}(H_1)}{4\bigl(\|\Gamma_1\|^2 \vee \|\Gamma_2\|^2\bigr)}.
    \end{equation*}
\end{lemma}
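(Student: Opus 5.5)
The plan is to reduce to a Rayleigh-quotient bound. Set $\Gamma=[\Gamma_1\ \Gamma_2]$, which is invertible, and $D=\Gamma^\top H\Gamma=\mathrm{diag}(H_0,H_1)$. Since $D$ is block diagonal with positive definite blocks, $\sigma_{\min}(D)=\sigma_{\min}(H_0)\wedge\sigma_{\min}(H_1)$. Because $H$ is symmetric positive semidefinite, $\sigma_{\min}(H)=\min_{\|v\|_2=1}v^\top Hv$. So it suffices to show $v^\top Hv\ge \sigma_{\min}(D)\,/\,\bigl(4(\|\Gamma_1\|^2\vee\|\Gamma_2\|^2)\bigr)$ for every unit vector $v$.

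Fix $v\in\mathbb{S}^{d-1}$ and write $v=\Gamma w$ with $w=\Gamma^{-1}v$. Then
\[
v^\top Hv=w^\top Dw\ge\sigma_{\min}(D)\|w\|_2^2,
\]
so the task reduces to a lower bound on $\|w\|_2$. We have $\|w\|_2\ge \|v\|_2/\|\Gamma\|=1/\|\Gamma\|$, hence $\|w\|_2^2\ge 1/\|\Gamma\|^2$.

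It remains to bound $\|\Gamma\|$ in terms of the blocks. Split $w=(w_1,w_2)$ conformally with $[\Gamma_1\ \Gamma_2]$, so that $\Gamma w=\Gamma_1w_1+\Gamma_2w_2$. The triangle inequality gives
\[
\|\Gamma w\|_2\le\|\Gamma_1\|\|w_1\|_2+\|\Gamma_2\|\|w_2\|_2\le(\|\Gamma_1\|\vee\|\Gamma_2\|)(\|w_1\|_2+\|w_2\|_2).
\]
By Cauchy--Schwarz, $\|w_1\|_2+\|w_2\|_2\le\sqrt2\,\|w\|_2$. Therefore $\|\Gamma\|^2\le 2(\|\Gamma_1\|^2\vee\|\Gamma_2\|^2)\le 4(\|\Gamma_1\|^2\vee\|\Gamma_2\|^2)$, and combining with the previous paragraph yields the claim.

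There is no real obstacle here. The only point needing care is using the invertibility of $\Gamma$ to pass from $v$ to $w$, and recording that for PSD $H$ the minimal singular value coincides with the minimal eigenvalue. The factor $4$ is looser than the factor $2$ obtained above, so a cruder estimate also suffices.
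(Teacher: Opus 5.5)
Your proof is correct and follows essentially the same route as the paper: both write a vector as $\Gamma_1 u+\Gamma_2 v$ and compare the Rayleigh quotient of $H$ with that of the block-diagonal matrix $[\Gamma_1\ \Gamma_2]^\top H[\Gamma_1\ \Gamma_2]$. The paper splits into cases according to whether $\|u\|_2\ge\|v\|_2$ and uses $\|\Gamma_1u+\Gamma_2v\|_2^2\le 4(\|\Gamma_1\|^2\vee\|\Gamma_2\|^2)(\|u\|_2^2\vee\|v\|_2^2)$; you instead work with the full norm $\|u\|_2^2+\|v\|_2^2$ and Cauchy--Schwarz, which avoids the case split and gives the sharper constant $2$ in place of $4$.
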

\begin{proof}{Proof}
    For any nonzero \(y\in\mathbb{R}^d\), write \(y=\Gamma_1u+\Gamma_2v\) uniquely with \(u\in\mathbb{R}^p\) and \(v\in\mathbb{R}^q\). Then \(y^\top Hy=u^\top H_0u+v^\top H_1v\ge \sigma_{\min}(H_0)\|u\|_2^2+\sigma_{\min}(H_1)\|v\|_2^2\), while \(\|y\|_2^2=\|\Gamma_1u+\Gamma_2v\|_2^2\le 4(\|\Gamma_1\|^2\vee\|\Gamma_2\|^2)(\|u\|_2^2\vee\|v\|_2^2)\). If \(\|u\|_2^2\ge\|v\|_2^2\), then \(y^\top Hy/\|y\|_2^2\ge \sigma_{\min}(H_0)/[4(\|\Gamma_1\|^2\vee\|\Gamma_2\|^2)]\); the other case is analogous. Taking the infimum over \(y\neq0\) gives the result. 
\end{proof}

\medskip
\noindent
\textbf{Upper bound on \(\|V_m^{(k)}\|\) and the determinant.}
From the definition \(z_t^m=[1,x_t^\top,(u_t^{m})^\top]^\top\) and the feedback
\(u_t^m = -F_m^{(k-1)}x_t + f_m^{(k-1)} + \alpha_m^{(k-1)}v_t^m\), we have
\begin{equation}\label{bound.z2}
\|z_t^m\|_2^2 \le C\bigl(1 + \|x_t\|_2^2 + \|v_t^m\|_2^2\bigr).
\end{equation}

For the exploration noise, since \(\{v_t^m\}\) are i.i.d.\ sub‑Gaussian,
Bernstein's inequality implies that for every \(k\ge K_{v2,m}=O(\log\log(1/\delta))\),
it holds for some event \(E_{v2,k}\) with \(\mathbb{P}(E_{v2,k})\ge 1-\delta_k/40\) that
\(\sum_{t=\bar\tau_m^{(k-1)}}^{\bar\tau_m^{(k)}-1} \|v_t^m\|_2^2 \le 2d_m\tau_m^{(k)}\).

For the state, we rewrite \eqref{def.tildeomega} as
\begin{equation}\label{state:Acl}
    x_{t+1} = A_t^0 + A^{\text{cl}}_t x_t + \tilde{w}_t,
\end{equation}
where \(A_t^0 = A_0 + \sum_{m=1}^{M} B_m f_m^{(k_m(t))}\),
\(A^{\text{cl}}_t = A - \sum_{m=1}^{M} B_m F_m^{(k_m(t))}\), and
\(\tilde{w}_t = \omega_t + \sum_{m=1}^{M} B_m \alpha^{(k_m(t))}_m v_t^m\).
Assumption~\ref{ass.stationary_x} implies \(\|A^{\text{cl}}_t\| \le 1-\gamma\) and
\(\|A_t^0\|_2 \le \overline{C}_{A,0}\) uniformly. Since the state process \(\{x_t\}_{t=0}^\infty\) is driven by sub-Gaussian innovations and the
dynamics are uniformly stable, it is uniformly sub-Gaussian; hence \(\|x_t\|_2^2\) is
uniformly sub-exponential for all \(t\). 
By Young's inequality,
\(\|x_{t+1}\|_2^2 \le (1-\frac{\gamma}{2})\|x_t\|_2^2 + C_{A} + C_{w}\|\tilde w_t\|_2^2\).
Iterating and summing over \(t=\bar\tau_m^{(k-1)},\dots,\bar\tau_m^{(k)}-1\) gives
\[
\sum_{t=\bar\tau_m^{(k-1)}}^{\bar\tau_m^{(k)}-1}\|x_t\|_2^2
\le \frac{2\|x_{\bar\tau_m^{(k-1)}}\|_2^2}{\gamma}
  + \frac{2C_{A}\tau_m^{(k)}}{\gamma}
  + \frac{2C_w}{\gamma}\sum_{j=\bar\tau_m^{(k-1)}}^{\bar\tau_m^{(k)}-2}
      b_{j,k}\,\|\tilde w_j\|_2^2,
\]
where \(0\le b_{j,k}=1-(1-\gamma)^{\bar\tau_m^{(k)}-1-j}\le 1\). 
The variable \(\|x_{\bar\tau_m^{(k-1)}}\|_2^2\) is uniformly sub-exponential and
independent of the future noises \(\{\tilde w_j\}_{j\ge \bar\tau_m^{(k-1)}}\);
moreover, \(\{\|\tilde w_j\|_2^2\}\) is an independent sequence of uniformly
sub-exponential random variables.  Hence, Bernstein's inequality for weighted
sums yields
\begin{equation}\label{bound.x2}
\mathbb{P}\!\left( \sum_{t=\bar\tau_m^{(k-1)}}^{\bar\tau_m^{(k)}-1}\|x_t\|_2^2> C_x \tau_m^{(k)} \right)
\le \exp\!\bigl(-C\tau_m^{(k)}\bigr).
\end{equation}
Hence, for every \(k\ge K_{x2,m}=O(\log\log(1/\delta))\), it holds for some event \(E_{x2,k}\) with \(\mathbb{P}(E_{x2,k})\ge 1-\delta_k/40\) that
\(
\sum_{t=\bar\tau_m^{(k-1)}}^{\bar\tau_m^{(k)}-1} \|x_t\|_2^2 \le C_x \tau_m^{(k)} .
\)
Set \(E_{z,k} = E_{v2,k}\cap E_{x2,k}\); then
\(\mathbb{P}(E_{z,k})\ge 1-\delta_k/20\).
For all \(k\ge K_{z,m}=\max\{K_{v2,m},K_{x2,m}\}\),
on \(E_{z,k}\), by \eqref{bound.z2}, we have
\[
\|V_m^{(k)}\|
\le \beta_m + \sum_{t=\bar\tau_m^{(k-1)}}^{\bar\tau_m^{(k)}-1}\|z_t^m\|_2^2
\le \overline{C}_V\,\tau_m^{(k)},
\qquad
\det(V_m^{(k)}) \le \|V_m^{(k)}\|^{1+n+d_m}
\le \bigl(\overline{C}_V\,\tau_m^{(k)}\bigr)^{1+n+d_m}.
\]

\medskip
\noindent
\textbf{Lower bound on \(\sigma_{\min}(V_m^{(k)})\).}
During the whole epoch \(k\) player \(m\) uses the fixed feedback
\((F_m^{(k-1)},f_m^{(k-1)})\), therefore the regression vector can be
written as
\[
z_t^{m}=G_{m,k}\bar{x}_t+\alpha_m^{(k-1)}J_m v_t^{m},\qquad
\bar{x}_t=\begin{bmatrix}1 & x_t\end{bmatrix}^{\top},
\]
where
\(
G_{m,k}=
\begin{bmatrix}
1 & 0_{1\times n}\\
0_{n\times1} & I_n\\
f_m^{(k-1)} & -F_m^{(k-1)}
\end{bmatrix}\) and 
\(J_m=
\begin{bmatrix}
0_{(1+n)\times d_m}\\
I_{d_m}
\end{bmatrix}.
\)
Both \(\|G_{m,k}\|\) and \(\|J_m\|\) are bounded by \(1+\kappa_m+\kappa'_m\).
Inserting this into the definition of \(V_m^{(k)}\) and normalising by
\(\tau_m^{(k)}(\alpha_m^{(k-1)})^{2}\) yields
\begin{align}\label{eq:V_decomp}
\nonumber \frac{V_m^{(k)}}{\tau_m^{(k)}(\alpha_m^{(k-1)})^{2}}
=&
(\alpha_m^{(k-1)})^{-2}G_{m,k}X_k G_{m,k}^{\top}
+J_m W_k J_m^{\top}
\\
&+(\alpha_m^{(k-1)})^{-1}\bigl(G_{m,k}C_k J_m^{\top}+J_m C_k^{\top}G_{m,k}^{\top}\bigr)
+\frac{\beta_m I}{\tau_m^{(k)}(\alpha_m^{(k-1)})^{2}},
\end{align}
with the empirical matrices
\begin{align*}
X_k=\frac{1}{\tau_m^{(k)}}\sum_{t=\bar\tau_m^{(k-1)}}^{\bar\tau_m^{(k)}-1}\bar{x}_t\bar{x}_t^{\top},\quad 
W_k=\frac{1}{\tau_m^{(k)}}\sum_{t=\bar\tau_m^{(k-1)}}^{\bar\tau_m^{(k)}-1}v_t^{m}(v_t^{m})^{\top},\quad 
C_k=\frac{1}{\tau_m^{(k)}}\sum_{t=\bar\tau_m^{(k-1)}}^{\bar\tau_m^{(k)}-1}\bar{x}_t(v_t^{m})^{\top}.
\end{align*}
We will bound each of these three blocks on a suitable high‑probability event.

\emph{Step~1: State covariance \(X_k\).}
Set
\(
\hat\mu_k=\frac{1}{\tau_m^{(k)}}\sum_{t=\bar\tau_m^{(k-1)}}^{\bar\tau_m^{(k)}-1}x_t\) and 
\(\hat\Gamma_k=\frac{1}{\tau_m^{(k)}}\sum_{t=\bar\tau_m^{(k-1)}}^{\bar\tau_m^{(k)}-1}(x_t-\hat\mu_k)(x_t-\hat\mu_k)^{\top}.
\)
The augmented covariance decomposes as
\begin{equation}\label{eq:Xk_block}
X_k=
\begin{bmatrix}
1 & \hat\mu_k^{\top}\\
\hat\mu_k & \hat\Gamma_k+\hat\mu_k\hat\mu_k^{\top}
\end{bmatrix}.
\end{equation}
Thus it suffices to lower‑bound \(\sigma_{\min}(\hat\Gamma_k)\) and to
upper‑bound \(\|\hat\mu_k\|_2\).

Recall \eqref{state:Acl}, the matrices \(A_t^{0},A_t^{\mathrm{cl}}\) are \(\mathcal F_t\)-measurable,
where \(\mathcal F_t\) denotes the information available at time \(t\) before
the noises \(\omega_t,\{v_t^{j}\}_{1\le j\le M}\) are drawn; by
Assumption~\ref{ass.stationary_x} they satisfy
\(\|A_t^{\mathrm{cl}}\|\le1-\gamma\) uniformly.
The effective noise \(\tilde w_t\) is independent of \(\mathcal F_t\),
conditionally sub‑Gaussian, and its covariance matrix is uniformly bounded below
by \(\Sigma_{\omega}\succ0\).

For any fixed direction \(q\in\mathbb S^{n-1}\) the scalar process
\(d_t=q^{\top}(x_{t+1}-x_t)\) can be written as
\(d_t = q^{\top}(A_t^{0}+(A_t^{\mathrm{cl}}-I)x_t) + q^{\top}\tilde w_t\).
The first term is \(\mathcal F_t\)-measurable, while the second is a
conditionally centred sub‑Gaussian random variable whose conditional
variance satisfies
\(\mathbb{E}[(q^{\top}\tilde w_t)^{2}\mid\mathcal F_t]
\ge\sigma_{\min}(\Sigma_{\omega})>0\).
By a standard Paley–Zygmund argument (Proposition~3.1 of
\cite{simchowitz2018learning}), this implies that the process
\(\{d_t\}\) satisfies the \((1,\nu,p)\)-block martingale small‑ball condition
of~\cite{simchowitz2018learning} with constants \(\nu,p>0\) that depend only
on the sub‑Gaussian norm of \(\tilde w_t\) and on \(\sigma_{\min}(\Sigma_{\omega})\).

Applying Proposition~2.5 of~\cite{simchowitz2018learning} to
\(\{d_t\}_{t=\bar\tau_m^{(k-1)}}^{\bar\tau_m^{(k)}-2}\) and using a net argument
(Corollary~4.2.11 of~\cite{vershynin2018high}) shows that for every
\(k\ge K_{\Gamma,m}=O(\log\log(1/\delta))\), it holds for some event
\(E_{\Gamma,k}\) with \(\mathbb{P}(E_{\Gamma,k})\ge1-\delta_k/20\) that
\[
\sigma_{\min}\!\left(
\frac{1}{\tau_m^{(k)}}\sum_{t=\bar\tau_m^{(k-1)}}^{\bar\tau_m^{(k)}-2}
(x_{t+1}-x_t)(x_{t+1}-x_t)^{\top}
\right)\ge \underline{C}_\Gamma>0.
\]

Now for any \(q\in\mathbb S^{n-1}\) set
\(a_t=q^{\top}(x_t-\hat\mu_k)\); then
\(q^{\top}(x_{t+1}-x_t)=a_{t+1}-a_t\) and
\((a_{t+1}-a_t)^{2}\le2a_{t+1}^{2}+2a_t^{2}\).
Summing this inequality from \(t=\bar\tau_m^{(k-1)}\) to
\(\bar\tau_m^{(k)}-2\) and using that each \(a_t\) appears at most twice gives
\(
\sum_{t=\bar\tau_m^{(k-1)}}^{\bar\tau_m^{(k)}-1}
(q^{\top}(x_t-\hat\mu_k))^{2}
\ge \frac{1}{4}\sum_{t=\bar\tau_m^{(k-1)}}^{\bar\tau_m^{(k)}-2}(a_{t+1}-a_t)^{2}.
\)
Hence on \(E_{\Gamma,k}\),
\(
\sigma_{\min}(\hat\Gamma_k)\ge \frac{\underline{C}_\Gamma}{4}>0.
\)

Now we derive the uniform bound for \(\|\hat\mu_k\|_2\). Similar to the argument leading to \eqref{bound.x2}, for every
\(k\ge K_{\mu,m}=O(\log\log(1/\delta))\), it holds for some event
\(E_{\mu,k}\) with \(\mathbb{P}(E_{\mu,k})\ge 1-\delta_k/20\) that
\(
\|\hat\mu_k\|_2 \le C_\mu .
\)

Now apply Lemma~\ref{lemma.minieigenvalue} to the matrix in
\eqref{eq:Xk_block} with
\(
\Gamma_1=\begin{bmatrix}1\\ -\hat\mu_k\end{bmatrix}\) and \(
\Gamma_2=\begin{bmatrix}0_{1\times n}\\ I_n\end{bmatrix}.
\)
On the event \(E_{0,k}:=E_{\Gamma,k}\cap E_{\mu,k}\), we obtain
\[
\sigma_{\min}(X_k)\ge \underline{C}_{\bar x}>0,
\]
where \(\underline{C}_{\bar x}\) depends only on \(\underline{C}_\Gamma\) and \(C_\mu\).

\emph{Step~2: Exploration covariance \(W_k\).}
The vectors \(\{v_t^{m}\}_{t=\bar\tau_m^{(k-1)}}^{\bar\tau_m^{(k)}-1}\) are i.i.d.\ sub‑Gaussian with
covariance matrix \(I_{d_m}\).  Stack them into an \(\tau_m^{(k)}\times d_m\) matrix
and apply Theorem~4.6.1 of~\cite{vershynin2018high}.  For any \(s>0\),
with probability at least \(1-2e^{-s^{2}}\),
\[
\left\|\frac{1}{\tau_m^{(k)}}\sum_{t=\bar\tau_m^{(k-1)}}^{\bar\tau_m^{(k)}-1}
v_t^{m}(v_t^{m})^{\top}-I_{d_m}\right\|
\le C\left(\sqrt{\frac{d_m+s^2}{\tau_m^{(k)}}}+\frac{d_m+s^2}{\tau_m^{(k)}}\right).
\]
Choose \(s=\sqrt{\log(40/\delta_k)}\) and suppose
\(\tau_m^{(k)}\ge C (d_m+\log(1/\delta_k))\), so that the right‑hand side
is at most \(1/2\). Hence, for every \(k\ge K_{W,m}=O(\log\log(1/\delta))\),
it holds for some event \(E_{W,k}\) with \(\mathbb{P}(E_{W,k})\ge1-\delta_k/20\) that
\[
\frac12 I_{d_m}\preceq W_k\preceq\frac32 I_{d_m}.
\]

\emph{Step~3: Assembly.}
Consider the leading block
\[
\widetilde M_k=
(\alpha_m^{(k-1)})^{-2}G_{m,k}X_k G_{m,k}^{\top}+J_m W_k J_m^{\top}.
\]
On the event \(E_{0,k}\cap E_{W,k}\) we have
\(\sigma_{\min}(X_k)\ge\underline{C}_{\bar x}\) and
\(\sigma_{\min}(W_k)\ge1/2\).
Using Lemma~\ref{lemma.minieigenvalue} with
\(
\Gamma_1=\begin{bmatrix}I_{1+n}\\0\end{bmatrix}\) and \(
\Gamma_2=\begin{bmatrix}-f_m^{(k-1)}\;\;F_m^{(k-1)}\;\;I_{d_m}\end{bmatrix}^{\top},
\)
a short calculation gives
\[
\sigma_{\min}(\widetilde M_k)\ge
\underline{C}_{\widetilde M}:=
\frac{(\underline{C}_{\bar x}/\bar\alpha^{2})\land(1/2)}
{8(1+\kappa_m+\kappa'_m)^{2}}>0.
\]

It remains to control the cross term and the regularisation in
\eqref{eq:V_decomp}.  On the event \(E_{z,k}\), we have \(\|V_m^{(k)}\|\le\overline{C}_V\tau_m^{(k)}\log(\tau_m^{(k)}/\delta_k)\),
\(\det(V_m^{(k)})\le(\overline{C}_V\tau_m^{(k)}\log(\tau_m^{(k)}/\delta_k))^{1+n+d_m}\).  Similar to the argument leading to~\eqref{eq:noise_av} in Lemma~\ref{lem:noise},
for every \(k\ge K_{3,m}=O(\log\log(1/\delta))\), it holds for some event
\(E_{3,k}\) with \(\mathbb{P}(E_{3,k})\ge1-\delta_k/20\) that on \(E_{3,k}\cap E_{z,k}\),
\[
\left\|(\alpha_m^{(k-1)})^{-1}\bigl(G_{m,k}C_k J_m^{\top}+J_m C_k^{\top}G_{m,k}^{\top}\bigr)
+\frac{\beta_m I}{\tau_m^{(k)}(\alpha_m^{(k-1)})^{2}}\right\|
\le C\,\frac{\;\bigl(k^{2}+\log^{2}(1/\delta)\bigr)}
           {\sqrt{\tau_m^{(k)}}\alpha_m^{(k-1)}} .
\]

Since \(\tau_m^{(k)}\asymp\lambda^{k}\) and \(\alpha_m^{(k-1)}\asymp\lambda^{-\nu_m k}\) with \(\nu_m<\frac12\),
the denominator \(\tau_m^{(k)}(\alpha_m^{(k-1)})^{2}\asymp\lambda^{(1-2\nu_m)k}\)
grows exponentially with exponent \(\frac12-\nu_m>0\), while the numerator
contains only polynomial terms in \(k\) and \(\log(1/\delta)\).  Hence
there exists \(K_{3,m}=O(\log\log(1/\delta))\) such that for all
\(k\ge K_{3,m} \ge K_{z,m}\) the right‑hand side is at most \(\underline{C}_{\widetilde M}/4\),
which completes the control of the perturbation term.

\medskip
\noindent
\textbf{Conclusion.}
Define the event
\(
E_{V,k}=E_{z,k}\cap E_{\Gamma,k}\cap E_{\mu,k}\cap E_{W,k}\cap E_{3,k}.
\)
By the union bound \(\mathbb{P}(E_{V,k})\ge1-\delta_k/4\).
On \(E_{V,k}\), for all \(k\) exceeding the maximum
\(K_{V,m}:=\max\{K_{z,m},K_{\Gamma,m},K_{\mu,m},K_{W,m},K_{3,m}\}=O(\log\log(1/\delta))\),
we have
\[
\sigma_{\min}(V_m^{(k)})\ge\underline{C}_V\,\tau_m^{(k)}(\alpha_m^{(k-1)})^{2}
\]
with \(\underline{C}_V=\underline{C}_{\widetilde M}/2\).
Together with the upper bound obtained at the beginning, this completes the
proof of Lemma~\ref{lem:V_eigen}. 

\subsubsection{Proof of Lemma~\ref{lemma.diffTheta1}}

We first give a formal description of the asynchronous tail.
For each epoch \(k\ge1\) define the synchronous phase
\[
S^{(k)} = [\underline\tau^{(k)},\bar\tau^{(k)}],
\qquad
\underline\tau^{(k)} = \max_{1\le j\le M}\bar\tau_j^{(k-1)},\;
\bar\tau^{(k)} = \min_{1\le j\le M}\bar\tau_j^{(k)}-1 .
\]
During \(S^{(k)}\) every player employs the policy obtained after her
\((k-1)\)-st update, so that the effective parameter
\(\Theta_{m,t}^{*}\) equals the constant \(\Theta_{m}^{(k),*}\) throughout
\(S^{(k)}\).  The complementary set
\[
T_m^{(k)} = [\bar\tau_m^{(k-1)},\bar\tau_m^{(k)}-1]\setminus S^{(k)}
\]
is called the asynchronous tail; by Condition~\ref{condition:tauk_exp} its
length satisfies \(|T_m^{(k)}|\le C_\tau k\) for all large \(k\).  On
\(T_m^{(k)}\) we only have the deterministic bound
\(\|\Theta_{m,t}^{*}-\Theta_{m}^{(k),*}\|\le C_\Theta\), which follows from
the compactness of the admissible strategy sets \(\mathcal A_m,\mathcal B_m\).

Now we construct the required event.  As established in the proof of Lemma~\ref{lem:V_eigen}, the state process \(\{x_t\}_{t=0}^\infty\) is uniformly sub‑Gaussian, which implies that the regression vectors \(\{z_t^{m}\}_{t=0}^\infty\) are also uniformly sub‑Gaussian.  It is obvious from its definition that \(\{\tilde w_t^{m}\}_{t=1}^\infty\) is a uniformly sub‑Gaussian sequence.  Applying the standard sub‑Gaussian maximal inequality over the interval \([\bar\tau_m^{(k-1)},\bar\tau_m^{(k)}-1]\) shows that for every \(k\ge K_{A,m}=O(\log\log(1/\delta))\), it holds for some events \(E_{A1,k},E_{A2,k}\) with \(\mathbb{P}(E_{A1,k})\ge1-\delta_k/8\) and \(\mathbb{P}(E_{A2,k})\ge1-\delta_k/8\) that
\[
\max_{\bar\tau_m^{(k-1)}\le t\le\bar\tau_m^{(k)}-1}\|z_t^{m}\|_2
\le C_z\sqrt{\log(\tau_m^{(k)}/\delta_k)},\qquad
\max_{\bar\tau_m^{(k-1)}\le t\le\bar\tau_m^{(k)}-1}\|\tilde w_t^{m}\|_2
\le C_w\sqrt{\log(\tau_m^{(k)}/\delta_k)} .
\]
Set \(E_{A,k}=E_{A1,k}\cap E_{A2,k}\); then \(\mathbb{P}(E_{A,k})\ge1-\delta_k/4\). 
On this event we bound the two quantities appearing in the lemma.

For the first term, using that the summand is non‑zero only on
\(T_m^{(k)}\) and that \(|T_m^{(k)}|\le C_\tau k\),
\[
\left\|\sum_{t=\bar\tau_m^{(k-1)}}^{\bar\tau_m^{(k)}-1}
        z_t^{m}(z_t^{m})^{\top}(\Theta_{m,t}^{*}-\Theta_{m}^{(k),*})^{\top}\right\|
\le \sum_{t\in T_m^{(k)}}\|z_t^{m}\|_2^{2}\,
                         \|\Theta_{m,t}^{*}-\Theta_{m}^{(k),*}\|
\le C\,k\,\log(\tau_m^{(k)}/\delta_k).
\]

For the second term,
\[
\left\|\sum_{t=\bar\tau_m^{(k-1)}}^{\bar\tau_m^{(k)}-1}
        (\Theta_{m,t}^{*}-\Theta_{m}^{(k),*})^{\top}z_t^{m}(\tilde w_t^{m})^{\top}\right\|
\le \sum_{t\in T_m^{(k)}}
      \|\Theta_{m,t}^{*}-\Theta_{m}^{(k),*}\|\,
      \|z_t^{m}\|_2\,\|\tilde w_t^{m}\|_2
\le C\,k\,\log(\tau_m^{(k)}/\delta_k).
\]

Since
\(k\log(\tau_m^{(k)}/\delta_k)
= k\bigl[\,O(k)+\log(1/\delta)+2\log(k+1)\bigr]
\le C\bigl(k^{2}+\log(1/\delta)\bigr)\), both quantities are bounded by
\(C(k^{2}+\log(1/\delta))\) on \(E_{A,k}\), as required.

\subsubsection{Proof of Lemma~\ref{lem:cov_noise}}

We treat the synchronous phase \(S^{(k)}\) and the asynchronous tail
\(T_m^{(k)}\) separately.  Recall that \(|T_m^{(k)}|\le C_\tau k\) and
\(|S^{(k)}|\ge \tau_m^{(k)}-C_\tau k\).

We first deal with the synchronous phase \(S^{(k)}\).
On \(S^{(k)}\) all players use fixed strategies, hence the effective noise
vectors \(\tilde w_t^{m}\) are i.i.d.\ with zero mean and covariance matrix 
\(\Sigma_m^{(k),*}\).  Let
\(y_t = (\Sigma_m^{(k),*})^{-1/2}\tilde w_t^{m}\) for \(t\in S^{(k)}\).
Then \(\{y_t\}_{t\in S^{(k)}}\) are i.i.d.\ sub‑Gaussian with
\(\mathbb{E}[y_t]=0\) and \(\mathbb{E}[y_t y_t^{\top}]=I_n\).
Applying Theorem~4.6.1 of~\cite{vershynin2018high} exactly as in Step~2
of the proof of Lemma~\ref{lem:V_eigen} shows that for every
\(k\ge K_{c1,m}=O(\log\log(1/\delta))\), it holds for some event
\(E_{c1,k}\) with \(\mathbb{P}(E_{c1,k})\ge1-\delta_k/8\) that
\[
\left\|\frac{1}{|S^{(k)}|}\sum_{t\in S^{(k)}} y_t y_t^{\top}-I_n\right\|
\le C\sqrt{\frac{n+\log(1/\delta_k)}{|S^{(k)}|}} .
\]
Multiplying by \((\Sigma_m^{(k),*})^{1/2}\) on both sides, using
\(\|\Sigma_m^{(k),*}\|\le C\) and \(|S^{(k)}|\asymp\tau_m^{(k)}\), we obtain
\[
\left\|\frac{1}{\tau_m^{(k)}}\sum_{t\in S^{(k)}}\bigl(\tilde w_t^{m}(\tilde w_t^{m})^{\top}
      -\Sigma_m^{(k),*}\bigr)\right\|
\le C\sqrt{\frac{n+\log(1/\delta_k)}{\tau_m^{(k)}}} .
\]

For the asynchronous tail \(T_m^{(k)}\), we use the sub‑Gaussian maximal inequality as
in the proof of Lemma~\ref{lemma.diffTheta1}.  This shows that for every \(k\ge K_{c2,m}=O(\log\log(1/\delta))\), it holds for some event
\(E_{c2,k}\) with \(\mathbb{P}(E_{c2,k})\ge1-\delta_k/8\) that
\[
\max_{\bar\tau_m^{(k-1)}\le t\le\bar\tau_m^{(k)}-1}\|\tilde w_t^{m}\|_2^{2}
\le C\log(\tau_m^{(k)}/\delta_k).
\]
On \(E_{c2,k}\),
\[
\left\|\frac{1}{\tau_m^{(k)}}\sum_{t\in T_m^{(k)}} \bigl(\tilde w_t^{m}(\tilde w_t^{m})^{\top}
      -\Sigma_{m}^{(k),*}\bigr)\right\|
\le \frac{|T_m^{(k)}|}{\tau_m^{(k)}}\Bigl(\max_{\bar\tau_m^{(k-1)}\le t\le\bar\tau_m^{(k)}-1}\|\tilde w_t^{m}\|_2^{2}+C\Bigr)
\le C\,\frac{k\log(\tau_m^{(k)}/\delta_k)}{\tau_m^{(k)}} .
\]

Now set \(E_{\mathrm{cov},k}=E_{c1,k}\cap E_{c2,k}\) and
\(K_{\mathrm{cov},m}=\max\{K_{c1,m},K_{c2,m}\}=O(\log\log(1/\delta))\).
Then \(\mathbb{P}(E_{\mathrm{cov},k})\ge1-\delta_k/4\) and for all
\(k\ge K_{\mathrm{cov},m}\), on \(E_{\mathrm{cov},k}\),
\[
\left\|\frac1{\tau_m^{(k)}}\!\sum_{t=\bar\tau_m^{(k-1)}}^{\bar\tau_m^{(k)}-1}
      \bigl(\tilde w_t^{m}(\tilde w_t^{m})^{\top}-\Sigma_{m}^{(k),*}\bigr)\right\|
\le C\sqrt{\frac{n+\log(1/\delta_k)}{\tau_m^{(k)}}}
   + C\,\frac{k\log(\tau_m^{(k)}/\delta_k)}{\tau_m^{(k)}} .
\]

Since \(\tau_m^{(k)}\asymp\lambda^{k}\) and \(\nu_m<\frac12\), both terms
on the right‑hand side are of smaller order than \(\eta_m^{(k)}(\delta)\) in \eqref{def.etak}.
Hence, after possibly increasing \(K_{\mathrm{cov},m}\) by an absolute
constant, the total error is bounded by \(C\eta_m^{(k)}(\delta)\) for all
\(k\ge K_{\mathrm{cov},m}\).

\section{Algorithm Implementation and Sufficient Conditions  in Section  \ref{sec:dynamic_oligopoly}}\label{appendix.oligopoly}

This section provides the implementation details of Algorithm~\ref{algorithm.indepedentlearning.Mplayer} for the dynamic Cournot competition model, followed by the theoretical analysis that establishes sufficient conditions for Assumptions~\ref{ass.def.bestresponsemap}, \ref{ass.Psi_contractive_selfmap}, and \ref{ass.uniform.Phimcontinuity}. Appendix \ref{appendix.algoimplement} derives a directly verifiable characterization of the feasible set $\mathcal{S}_m$ and a practical method for computing the greedy strategy (Algorithm~\ref{algorithm.oligopoly.greedypolicycalculation}). Appendix \ref{appendix.oligopoly.optimal} identify conditions under which the optimal strategy of \eqref{oligopoly.costfunction} is unique, independent of the initial price, and Lipschitz continuous with respect to the perceived parameters. Using these conditions together with the equivalence relationship \eqref{eqn.relationship},  Appendix \ref{appendix.oligopoly.suff_condition} establishes a set of sufficient conditions that guarantee Assumptions~\ref{ass.def.bestresponsemap}, \ref{ass.Psi_contractive_selfmap}, and \ref{ass.uniform.Phimcontinuity}. The proofs of the lemmas in Appendices~\ref{appendix.algoimplement} and \ref{appendix.oligopoly.suff_condition} are collected in Appendix~\ref{appendix.additionalproofs2}.  
In the first two subsections, we focus on a representative firm $m$ and consider its problem \eqref{oligopoly.costfunction} under a perceived parameter pair \((\Theta_m = [(1 - s')a',\; s',\; -(1 - s')b']^{\top},\; \Sigma_m=\sigma^2_m)\).

\subsection{Algorithm Implementation}\label{appendix.algoimplement}

To derive an implementable way to check feasibility and calculate the greedy strategy in Algorithm~\ref{algorithm.indepedentlearning.Mplayer}, we first derive an explicit representation of the cost function \eqref{oligopoly.costfunction} under the perceived model \eqref{oligopoly.statisticalmodel}. For a given linear strategy \((F_m, f_m, \alpha_m)\) with \((F_m, f_m) \in \mathcal{A}_m \times \mathcal{B}_m\) and \(\alpha_m \ge 0\), the price process implied by \eqref{oligopoly.statisticalmodel} becomes
\begin{equation}\label{dynamic.oligopoly.Ffm}
p_{t+1} = \bigl(s' + (1 - s')b'F_m\bigr) p_t + (1 - s')a' - (1 - s')b' f_m - (1 - s')b'\alpha_m v^m_t + \omega'_t,
\end{equation}
with initial price \(p_0 = p\). The exploration noise \(\{v^m_t\}_{t=0}^{\infty}\) and the perceived market noise \(\{\omega'_t\}_{t=0}^{\infty}\) are mutually independent i.i.d.\ sub-Gaussian sequences with mean zero and variances \(1\) and \(\sigma_m^2\), respectively. Substituting \eqref{dynamic.oligopoly.Ffm} into the cost function \eqref{oligopoly.costfunction} yields
\begin{equation}\label{oligopoly.Jm}
J^{\Theta_m,\Sigma_m}_m\big( (F_m,f_m); p,\alpha_m \big)=\sum_{t=0}^{\infty} \rho^t \Bigl( Q_m(F_m) \mathbb{E}[p_t^2] + R_m(F_m,f_m) \mathbb{E}[p_t] + O_m(F_m,f_m) \Bigr),
\end{equation}
where \(\{p_t\}\) evolves according to \eqref{dynamic.oligopoly.Ffm}, and
\begin{align*}
Q_m(F_m) &= (\tfrac12 +(1 - s')b')F_m^{2} + s' F_m,\\
R_m(F_m,f_m) &= -(1 + 2(1 - s')b')F_m f_m - s' f_m + ((1 - s')a' - c_m)F_m,\\
O_m(F_m,f_m) &= (\tfrac12 +(1 - s')b')f_m^{2} + (c_m - (1 - s')a')f_m + (\tfrac12  +(1 - s')b')\alpha_m^2.
\end{align*} 
The expressions \eqref{oligopoly.Jm} provide the foundation for a directly verifiable characterization of the feasible set \(\mathcal{S}_m\) in \eqref{def:feasible_set} and a practical greedy strategy calculation method developed below.

\subsubsection{Explicit Characterization of \(\mathcal{S}_m\) }

We first provide a more explicit characterization of the feasible parameter set  \(\mathcal{S}_m\)  defined in \eqref{def:feasible_set} 
for player $m$. 

For all $F_m\in \mathcal A_m$, 
write \(A^m_{\mathrm{cl}}(F_m) = s'+ (1 - s')b'F_m\). This quantity characterizes the asymptotic stationarity and growth rate of \(\{p_t\}\): if \(|A^m_{\mathrm{cl}}(F_m)| < 1\), the process is asymptotically stationary; otherwise, 
\(|\mathbb{E}[p_t]|\) and \(\mathbb{E}[p_t^2]\) grow exponentially at the rates \(|A^m_{\mathrm{cl}}(F_m)|^t\) and \((A^m_{\mathrm{cl}}(F_m))^{2t}\), respectively. 
The cost \eqref{oligopoly.Jm} can be   \(-\infty\) (i.e., infinite profit) if and only if either (i) \(Q_m(F_m) < 0\) and \(\sqrt{\rho}\,|A^m_{\mathrm{cl}}(F_m)| \ge 1\), or (ii) \(Q_m(F_m) = 0\) and 
\(\sum_{t=0}^{\infty} \rho^t R_m(F_m,f_m) \mathbb{E}[p_t] = -\infty\). 
To exclude these pathological cases, we define the following set \(\hat{\mathcal{S}}_m\) 
 \begin{equation}\label{eg:feasible_set_oligopoly}
\hat{\mathcal{S}}_m = \left\{ (\Theta_m,\Sigma_m) : \forall F_m \in \mathcal{A}_m,\;
\begin{aligned}
&\text{if } Q_m(F_m) < 0 \text{ then } |A^m_{cl}(F_m)| < \tfrac{1}{\sqrt{\rho}},\\
&\text{if } Q_m(F_m) = 0 \text{ then } |A^m_{cl}(F_m)| < \tfrac{1}{\rho}.
\end{aligned}
 \right\}.
\end{equation}

The conditions in \eqref{eg:feasible_set_oligopoly} rule out any possibility of unbounded negative cost. 
The following lemma, whose proof can be found in Appendix~\ref{appendix.additionalproofs2}, establishes that this auxiliary set coincides with the feasible set \(\mathcal{S}_m\) required by Algorithm~\ref{algorithm.indepedentlearning.Mplayer}.

\begin{lemma}\label{lemma.oligopoly.feasibleset}
For the dynamic Cournot competition model, the feasible set \(\mathcal{S}_m\) defined in \eqref{def:feasible_set} is identical to the set \(\hat{\mathcal{S}}_m\) given by \eqref{eg:feasible_set_oligopoly}.
\end{lemma}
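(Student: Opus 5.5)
The plan is to prove the two inclusions $\hat{\mathcal S}_m\subseteq\mathcal S_m$ and $\mathcal S_m\subseteq\hat{\mathcal S}_m$ directly from the explicit cost representation \eqref{oligopoly.Jm}. First I will compute the moments of the perceived price process \eqref{dynamic.oligopoly.Ffm} in closed form. Write $A=A^m_{\mathrm{cl}}(F_m)$, $c=(1-s')a'-(1-s')b'f_m$ and $\sigma^2=\sigma_m^2+(1-s')^2b'^2\alpha_m^2$. Then
\[
\mathbb E[p_t]=A^t p+c\textstyle\sum_{i<t}A^i,\qquad \mathbb E[p_t^2]=(\mathbb E[p_t])^2+\sigma^2\textstyle\sum_{i<t}A^{2i}.
\]
Hence each summand of \eqref{oligopoly.Jm} is a combination of $\rho^tA^{2t}$, $\rho^tA^t$, $\rho^t$ and the geometric sums above. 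From this I can read off exactly when the series converges, diverges to $+\infty$, or diverges to $-\infty$, in terms of the sign of $Q_m(F_m)$ and of the size of $\rho A^2$ and $\rho|A|$. The only inputs are $\sigma^2\ge\sigma_m^2>0$ and the identity $\mathbb E[p_t^2]\ge(\mathbb E[p_t])^2$.

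For $\mathcal S_m\subseteq\hat{\mathcal S}_m$ I will argue by contraposition: if $(\Theta_m,\Sigma_m)\notin\hat{\mathcal S}_m$, I exhibit some $(x,\alpha_m)$ and some admissible $(F_m,f_m)$ with $J=-\infty$, so that the infimum is $-\infty$.
\begin{itemize}
\item If $Q_m(F_m)<0$ and $\sqrt\rho|A|\ge1$, the variance part $\sigma^2\sum_{i<t}A^{2i}$ makes $\rho^tQ_m\mathbb E[p_t^2]$ non-summable and negative, and it dominates the terms linear in $\mathbb E[p_t]$. This holds for any $f_m$ (take $f_m=0$) and any $x$.
\item If $Q_m(F_m)=0$ and $|A|\ge1/\rho$, the quadratic term vanishes. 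The cost reduces to $\sum_t\rho^t\bigl(R_m\mathbb E[p_t]+O_m\bigr)$ with $\mathbb E[p_t]\sim A^t(p-c/(1-A))$, or $\sim ct$ when $A=1$, which is not possible here since $\rho<1$ forces $|A|\ge 1/\rho>1$. I choose $f_m\in\mathcal B_m$ with $R_m(F_m,f_m)\ne0$, which is possible because $R_m$ is affine in $f_m$, and then choose the initial price $p$ so that $R_m(p-c/(1-A))$ has the sign making the series $-\infty$. When $A<0$ the terms oscillate, so I will instead argue that the partial sums are unbounded below and adapt the definition of $J$ accordingly. The degenerate case where $R_m(F_m,\cdot)\equiv0$ has to be checked separately; I expect it to be excluded, or to make the cost finite and hence consistent with the definition of $\hat{\mathcal S}_m$.
\end{itemize}

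For $\hat{\mathcal S}_m\subseteq\mathcal S_m$ I need two bounds.
\begin{itemize}
\item \emph{Upper bound.} I take $F_m=0$, so that $Q_m(0)=0$ and $A=s'$. The condition in $\hat{\mathcal S}_m$ gives $|s'|<1/\rho$, so $\rho^t\mathbb E[p_t]$ is summable and the cost is finite. Hence the infimum is $<\infty$.
\item \emph{Lower bound.} I will show that $(F_m,f_m)\mapsto J$ is lower semicontinuous on the compact set $\mathcal A_m\times\mathcal B_m$ as an extended-real function taking values in $(-\infty,+\infty]$. It then attains its infimum, and that infimum is $>-\infty$.
\end{itemize}
To establish lower semicontinuity, I split $\mathcal A_m$ by the sign of $Q_m$.
\begin{itemize}
\item Where $Q_m>0$, I rewrite the summand as $Q_m(\mathbb E[p_t]+R_m/(2Q_m))^2$ plus a variance term minus a bounded remainder. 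Each summand is then bounded below by $-\,C\rho^t$ locally uniformly, and Fatou's lemma gives lower semicontinuity even where the series is $+\infty$.
\item Where $Q_m<0$, membership in $\hat{\mathcal S}_m$ gives $\rho A^2<1$. All series converge locally uniformly, so $J$ is continuous there.
\end{itemize}

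I expect the main obstacle to be the boundary set $\{Q_m(F_m)=0\}$ and its neighborhood. There, points with $Q_m<0$, $\rho A^2$ close to $1$ and a vanishing coefficient meet points with $Q_m>0$, and the lower bound on the summands is not obviously uniform. My first attempt would use the roots of the quadratic $Q_m$, which are $F_m=0$ and one other point. The strict inequality $|A|<1/\sqrt\rho$ holds at every $F_m$ with $Q_m<0$ and extends by continuity to $|A|\le1/\sqrt\rho$ at the roots, while the condition for $Q_m=0$ gives $|A|<1/\rho$. I would then bound $|Q_m(F_m)|\,\rho^tA^{2t}$ near a root by $C\,|F_m-F_0|\,\rho^t\max(A^2,\rho^{-1})^{t}$ and show it stays summable uniformly in a small neighborhood. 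If this fails, I would fall back on showing directly that the infimum over a neighborhood of each root is finite, using the explicit geometric-series formula for $J$.
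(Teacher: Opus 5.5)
Your outline matches the paper's proof: two inclusions, a contrapositive through the two ways of violating \eqref{eg:feasible_set_oligopoly}, a fixed strategy for $\inf J<\infty$, and lower semicontinuity plus compactness for $\inf J>-\infty$. However, two steps fail as planned, and the paper's proof passes over both in one line each.

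\textbf{Lower semicontinuity fails.} $J$ is not lower semicontinuous on $\mathcal A_m\times\mathcal B_m$ in general, so neither your main route nor attainment of the infimum survives. Write $A=A^m_{\mathrm{cl}}(F_m)$ and $\sigma^2=\sigma_m^2+((1-s')b')^2\alpha_m^2$. When $\rho A^2<1$, the variance part of \eqref{oligopoly.Jm} equals $\rho\sigma^2Q_m(F_m)/[(1-\rho)(1-\rho A^2)]$. Now take $s'=1/\sqrt\rho$ and $\beta:=(1-s')b'>0$.
\begin{itemize}
\item The pair lies in $\hat{\mathcal S}_m$: on $\{Q_m<0\}\subseteq(-s'/(\tfrac12+\beta),0)$ one has $A\in(s'/(1+2\beta),s')$.
\item As $F_m\uparrow0$, both $Q_m$ and $1-\rho A^2$ vanish linearly, with $Q_m/(1-\rho A^2)\to-1/(2\rho\beta)$.
\item Hence the variance part tends to $-\sigma^2/(2\beta(1-\rho))<0$, while it equals $0$ at $F_m=0$. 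The squared-mean part only adds a further nonpositive jump.
\end{itemize}
Your first-attempt bound $C|F_m-F_0|\rho^t\max(A^2,\rho^{-1})^t$ is not summable in $t$, so it cannot repair this. Your fallback is the right argument.
\begin{itemize}
\item On $\{Q_m<0\}$ the ratio $Q_m/(1-\rho A^2)$ stays bounded. Every zero of $1-\rho A^2$ in the closure of this set is a zero of $Q_m$, and $1-\rho A^2$ is either constant or has only simple zeros. This controls both the variance and the squared-mean parts.
\item On $\{Q_m\ge0\}$ near a zero of $Q_m$, one has $J\ge\sum_t\rho^t(R_m\mathbb E[p_t]+O_m)$, with $\rho|A|<1$ nearby.
\item Elsewhere your completing-the-square bound applies.
\end{itemize}
So $J$ is locally bounded below, and compactness gives $\inf J>-\infty$. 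That is all $\mathcal S_m$ requires, but it does not give a minimizer.

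\textbf{The case $R_m(F_m,\cdot)\equiv0$ is a genuine counterexample.} Take $s'=0$, $(1-s')b'=-\tfrac12$ and $(1-s')a'=c_m$. Then $Q_m$, $R_m$, $O_m$ all vanish identically, so $J\equiv0$ by \eqref{oligopoly.Jm} and the pair lies in $\mathcal S_m$. But $A^m_{\mathrm{cl}}(F_m)=-F_m/2$, so the pair lies outside $\hat{\mathcal S}_m$ as soon as $\kappa_m\ge2/\rho$. Thus $\mathcal S_m\subseteq\hat{\mathcal S}_m$ needs an extra nondegeneracy hypothesis; the paper's step ``$R_m$ is affine in $f_m$'' misses this too.

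Apart from this case, you can drop your second bullet, and with it the oscillation issue for $A<0$. Suppose $Q_m\not\equiv0$ and $Q_m(F_0)=0$ with $|A(F_0)|\ge1/\rho$. There are two possibilities.
\begin{itemize}
\item Admissible points next to $F_0$ have $Q_m<0$. By continuity they also have $\sqrt\rho|A|>1$.
\item $F_0=\pm\kappa_m$ is the nonzero root, with $\gamma:=\tfrac12+(1-s')b'<0$. A direct computation gives $|A(F_0)|=\kappa_m/2$, while the opposite endpoint has $Q_m<0$ and $|A|=(\tfrac12+2|\gamma|)\kappa_m>1/\rho$.
\end{itemize}
Either way your first bullet gives $J=-\infty$. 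Only when $Q_m\equiv0$ do you need $R_m=((1-s')a'-c_m)F_m$. There, $F_m=-\kappa_m$ gives $A>0$, and the divergence to $-\infty$ is monotone, exactly when $(1-s')a'\ne c_m$.
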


Because \(Q_m(F_m)\) and \(A^m_{\mathrm{cl}}(F_m)\) are respectively quadratic and linear functions of \(F_m\), checking the conditions in \eqref{eg:feasible_set_oligopoly} is straightforward. When implementing Algorithm~\ref{algorithm.indepedentlearning.Mplayer} 
(line 8), we decide whether to accept an estimate candidate or fall back by verifying these conditions.

\subsubsection{Greedy Strategy Calculation}\label{sec:greedystrategy}

This subsection describes how to compute the greedy strategy in \eqref{eq:Ff} for  Step 9 of  Algorithm~\ref{algorithm.indepedentlearning.Mplayer}. Directly searching over the grid \((F_m, f_m) \in \mathcal{A}_m \times \mathcal{B}_m\) to solve \eqref{eq:Ff} is computationally costly. To circumvent this issue, we exploit the problem structure, specifically the cost function \eqref{oligopoly.Jm}, to develop a more efficient solution.

At the \(k\)-th update epoch, for a given accepted parameter set \((\tilde{\Theta}^{(k)}_m,\tilde{\Sigma}^{(k)}_{m}) \in \hat{\mathcal{S}}_m\), an admissible strategy \((F_m,f_m) \in \mathcal{A}_m \times \mathcal{B}_m\) yields either a finite cost or an infinite positive cost. 
To determine when the cost is finite, we use the following lemma, which follows by applying the standard LQ formulas (or, equivalently, solving the Bellman equation) to the linear perceived dynamics \eqref{dynamic.oligopoly.Ffm} with the quadratic cost structure \eqref{oligopoly.Jm}.
\begin{lemma}\label{lemma.costfunction.oligopoly}
For any given \((\Theta_m,\Sigma_m)\in \hat{\mathcal{S}}_m\), \(p>0\), and \(\alpha_m \ge 0\), the cost \eqref{oligopoly.Jm} is finite if and only if one of the following holds:
\begin{enumerate}
    \item \(\sqrt{\rho}\,|A^m_{\mathrm{cl}}(F_m)| < 1\);
    \item \(\rho\,|A^m_{\mathrm{cl}}(F_m)| < 1\) and \(Q_m(F_m) = 0\);
    \item \(Q_m(F_m) = 0\) and \(R_m(F_m,f_m) = 0\).
\end{enumerate}
In the first two cases, for a fixed \(F_m\) the cost is a quadratic function of \(f_m\).
\end{lemma}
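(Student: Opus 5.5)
The plan is to compute the cost \eqref{oligopoly.Jm} in closed form by solving the first- and second-moment recursions of the perceived price process \eqref{dynamic.oligopoly.Ffm}. Write $a:=A^m_{\mathrm{cl}}(F_m)$, $c:=(1-s')a'-(1-s')b'f_m$ and $\sigma^2:=((1-s')b')^2\alpha_m^2+\sigma_m^2$. The moments satisfy
\[
\mathbb{E}[p_{t+1}]=a\,\mathbb{E}[p_t]+c,\qquad \mathbb{E}[p_{t+1}^2]=a^2\mathbb{E}[p_t^2]+2ac\,\mathbb{E}[p_t]+c^2+\sigma^2 .
\]
Hence $\mathbb{E}[p_t]$ is an affine combination of $a^t$ and $1$. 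Likewise, $\mathbb{E}[p_t^2]$ is a combination of $a^{2t}$, $a^t$ and $1$, with the usual degenerate forms $t a^t$ or $t$ when $a^2=a$ or $a=1$. Note that $\sigma^2>0$ always, because $\sigma_m^2>0$ for $(\Theta_m,\Sigma_m)\in\hat{\mathcal S}_m$ with a nondegenerate perceived noise. Also, the coefficient multiplying $a^{2t}$ in $\mathbb{E}[p_t^2]$ is positive whenever $|a|\ge1$, since the stationary-variance contribution then no longer converges. The discounted series $\sum_t\rho^t(\cdot)$ is a sum of geometric pieces $\rho^ta^{2t}$, $\rho^ta^t$, $\rho^t$, and I will check convergence piece by piece.

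Sufficiency goes case by case. In case 1, $\rho a^2<1$ implies $\rho|a|<1$ because $\rho<1$ and $\sqrt\rho|a|<1$ give $\rho|a|<\sqrt\rho<1$. Therefore every geometric piece converges and the cost is finite. In case 2, $Q_m=0$ kills the $\mathbb{E}[p_t^2]$ term, and the remaining pieces $\rho^t a^t$ and $\rho^t$ converge. In case 3, both $Q_m$ and $R_m$ vanish, so only $\sum_t\rho^tO_m$ remains, and it is finite.

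Necessity goes by contraposition. Suppose none of the three conditions holds, and split according to the sign of $Q_m$.
\begin{itemize}
\item If $Q_m>0$ and $\sqrt\rho|a|\ge1$, the $Q_m\mathbb{E}[p_t^2]$ term grows like $Q_m\cdot(\text{positive})\cdot\rho^ta^{2t}$. It dominates the lower-order $R_m\mathbb{E}[p_t]$ term, because $\rho^t|a|^t=o(\rho^ta^{2t})$ when $|a|>1$; in the boundary case $|a|=1$ the $\sigma^2$-driven linear growth in $t$ dominates. The cost is therefore $+\infty$.
\item $Q_m<0$ with $\sqrt\rho|a|\ge1$ is excluded by the definition of $\hat{\mathcal S}_m$.
\item If $Q_m=0$, then $\rho|a|<1$ is guaranteed by $\hat{\mathcal S}_m$, so case 2 would hold. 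If instead one reads the case split so that $\rho|a|\ge1$ is possible, the term $R_m\mathbb{E}[p_t]$ with $R_m\ne0$ diverges. Its sign is then forced to be $+\infty$ by the membership in $\hat{\mathcal S}_m$, together with the fact that $\mathcal S_m=\hat{\mathcal S}_m$ (Lemma \ref{lemma.oligopoly.feasibleset}) excludes a cost of $-\infty$.
\end{itemize}

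The positivity claim $p>0$ is used to ensure the transient coefficient of $a^t$ in $\mathbb{E}[p_t]$ does not accidentally vanish; I expect to need care here. Handling the exact cancellations of the transient coefficients in $\mathbb{E}[p_t]$ and $\mathbb{E}[p_t^2]$ at the boundary cases $|a|=1/\sqrt\rho$ and $|a|=1/\rho$ is the main obstacle.

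Finally, in cases 1 and 2 with $F_m$ fixed, $a$ is fixed and $c$ is affine in $f_m$. Consequently $\mathbb{E}[p_t]$ is affine and $\mathbb{E}[p_t^2]$ is quadratic in $f_m$, termwise and with summable coefficients. Also $R_m$ is affine and $O_m$ is quadratic in $f_m$. Summing the convergent series termwise then yields a quadratic function of $f_m$.
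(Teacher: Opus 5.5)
Your proposal is correct and is essentially the argument the paper intends. The paper just invokes the standard LQ/Bellman evaluation of a fixed linear policy under \eqref{dynamic.oligopoly.Ffm}, using the same moment-growth bookkeeping ($\mathbb{E}[p_t]\asymp (A^m_{\mathrm{cl}})^t$, $\mathbb{E}[p_t^2]\asymp (A^m_{\mathrm{cl}})^{2t}$) as in its proof of Lemma~\ref{lemma.oligopoly.feasibleset}.

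Note that $\sigma_m^2>0$ does not follow from membership in $\hat{\mathcal{S}}_m$; it is an extra nondegeneracy hypothesis that the ``only if'' direction genuinely needs. If $\sigma_m^2=\alpha_m=0$ and $p>0$ is the fixed point of the deterministic price recursion, then $p_t\equiv p$ and the cost is finite even though $Q_m(F_m)>0$ and $\sqrt{\rho}\,|A^m_{\mathrm{cl}}(F_m)|\ge 1$.

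By contrast, the hypothesis $p>0$ and the boundary cases you flag cause no trouble. The condition $\sqrt{\rho}\,|A^m_{\mathrm{cl}}|\ge 1$ forces $|A^m_{\mathrm{cl}}|>1$. At $|A^m_{\mathrm{cl}}|=1/\sqrt{\rho}$ the term $\rho^t Q_m\mathbb{E}[p_t^2]$ tends to a positive constant, while $\rho^t|A^m_{\mathrm{cl}}|^t=\rho^{t/2}\to 0$. Finally, $Q_m\le 0$ already puts you in case 1 or 2 by the definition of $\hat{\mathcal{S}}_m$, so the $\rho|A^m_{\mathrm{cl}}|\ge 1$ branch of your third bullet is vacuous.
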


Thus, to minimize \eqref{oligopoly.costfunction}, firm \(m\) searches over strategies satisfying the conditions of Lemma~\ref{lemma.costfunction.oligopoly} (otherwise the cost would be infinite). Because \(\mathcal{A}_m\) is one‑dimensional, we first perform a grid search over \(F_m \in \mathcal{A}_m\). For a fixed \(F_m\), when the first two conditions of the lemma hold, the optimal \(f_m\) can be found analytically: since \(\mathcal{B}_m = [-\kappa'_m, \kappa'_m]\) is a closed interval and the cost is quadratic in \(f_m\), the minimizer is either the unconstrained quadratic minimizer clipped to the interval, or one of the endpoints. In the third condition, the values of \(f_m\) that satisfy \(R_m(F_m,f_m)=0\) are directly computable because \(R_m\) is linear in \(f_m\). This procedure yields 
an efficient greedy strategy computation method, summarized in Algorithm~\ref{algorithm.oligopoly.greedypolicycalculation}. For other types of LQ games, particularly multi‑dimensional ones, we provide Algorithm~\ref{algorithm.multidim.greedypolicycalculation} for greedy strategy calculation in Appendix~\ref{appendix.multidim}.

\begin{algorithm}[ht]
\caption{Cournot Competition: Greedy Strategy Calculation for Firm $m$ at Time $\bar{\tau}^{(k)}_m$}
\label{algorithm.oligopoly.greedypolicycalculation}
\begin{algorithmic}[1]
\STATE \textbf{Require:} Accepted parameter pair $(\tilde{\Theta}^{(k)}_m,\tilde{\Sigma}^{(k)}_{m})$, exploration noise scale $\alpha_m^{(k)}$, initial price $p_{\bar{\tau}^{(k)}_m}$
\STATE Initialize candidate set $\mathcal{C} = \emptyset$
\FOR{each $F_m \in \mathcal{A}_m$} 
    \IF{$\sqrt{\rho}\,|A^m_{\text{cl}}(F_m)| < 1$ or ($\rho\,|A^m_{\text{cl}}(F_m)| < 1$ and $Q_m(F_m) = 0$)}
        \STATE Compute $f_{\min} = \arg\min\limits_{f_m \in \mathcal{B}_m} \, J_m^{\tilde{\Theta}^{(k)}_m,\tilde{\Sigma}^{(k)}_{m}}\bigl((F_m,f_m); p_{\bar{\tau}^{(k)}_m},\alpha_m^{(k)}\bigr)$
        \STATE Evaluate $J_{\min} = J_m^{\tilde{\Theta}^{(k)}_m,\tilde{\Sigma}^{(k)}_{m}}\bigl((F_m,f_{\min}); p_{\bar{\tau}^{(k)}_m},\alpha_m^{(k)}\bigr)$
        \STATE Add $(F_m, f_{\min}, J_{\min})$ to $\mathcal{C}$
    \ENDIF
    \IF{$Q_m(F_m) = 0$}
        \FOR{each $f_m \in \mathcal{B}_m$ such that $R_m(F_m,f_m) = 0$}
            \STATE Compute $J = J_m^{\tilde{\Theta}^{(k)}_m,\tilde{\Sigma}^{(k)}_{m}}\bigl((F_m,f_m); p_{\bar{\tau}^{(k)}_m},\alpha_m^{(k)}\bigr)$
            \STATE Add $(F_m, f_m, J)$ to $\mathcal{C}$
        \ENDFOR
    \ENDIF
\ENDFOR
\STATE Select the triple $(F_m^*,f_m^*,J^*)$ in $\mathcal{C}$ with the smallest cost $J^*$
\STATE \textbf{return} $(F_m^*,f_m^*)$
\end{algorithmic}
\end{algorithm}

\subsection{An explicit characterization of the optimal strategy of \eqref{oligopoly.Jm}}\label{appendix.oligopoly.optimal}

In this section, we study the setting in which the optimal strategy of the problem \eqref{oligopoly.Jm} over the admissible set \(\mathcal{A}_m \times \mathcal{B}_m\)  coincides with that of the corresponding unconstrained problem (the minimizer of \eqref{oligopoly.Jm} over \(\mathbb{R} \times \mathbb{R}\)). When this coincidence occurs, the optimal strategy over \(\mathcal{A}_m \times \mathcal{B}_m\) is unique and admits 
a closed‑form solution that is independent of the initial price \(p\) and Lipschitz continuous with respect to the parameters \((\Theta_m, \Sigma_m, \alpha_m)\). Leveraging the equivalence relationship \eqref{eqn.relationship}, these favorable properties allow us to establish Assumptions~\ref{ass.def.bestresponsemap}, \ref{ass.Psi_contractive_selfmap}, and \ref{ass.uniform.Phimcontinuity} under some sufficient conditions in Appendix~\ref{appendix.oligopoly.suff_condition}. The following lemma provides an explicit characterization of the optimal strategy for the unconstrained problem and states when it also solves the constrained problem.

\begin{lemma}\label{lemma.riccati.oligopoly}

If 
  \begin{equation}\label{eqn.oligopoly.riccati.well}
          (1 - s')b'>0, \quad \sqrt{\rho}|s'|<1, 
    \end{equation}
    then for any $p\in \mathbb{R}$, $\alpha_m\ge 0$, the following hold.

\noindent(1) The unconstrained problem $\inf\limits_{(F_m,f_m)\in \mathbb{R}\times \mathbb{R}} J^{\Theta_m,\Sigma_m}_m((F_m,f_m);p, \alpha_m)$ has a finite optimal value, and the unique minimizer is given by
    \begin{equation}\label{oligopoly.Phim}
    F_m^{\Theta_m}=\frac{-s' (2\rho P^{\Theta_m}_m (1 - s')b'+1)}{1+2(1 - s')b' + 2\rho P^{\Theta_m}_m ((1 - s')b')^2},
\end{equation}
\begin{equation}\label{oligopoly.phim}
    f_m^{\Theta_m}=\frac{ (1 - \rho s')((1 - s')a' - c_m) + 2\rho P^{\Theta_m}_m (1 - s')b' (1 - s')a' }{ \big( 1+2(1 - s')b' + 2\rho P^{\Theta_m}_m ((1 - s')b')^2 \big) (1 - \rho (s' +(1 - s')b' F_m^{\Theta_m})) },
\end{equation}
where
\begin{equation}\label{oligopoly.Pm}
    P^{\Theta_m}_m=\frac{\frac{\rho}2  (s')^2-(\frac12 +(1 - s')b')+\sqrt{\big[ (\frac12+(1 - s')b') - \frac{\rho}2  (s')^2 \big]^2-\rho ((1 - s')b')^2 (s')^2}}{2\rho ((1 - s')b')^2}.
\end{equation}
\noindent(2) If additionally $(F_m^{\Theta_m},f_m^{\Theta_m})\in \mathcal{A}_m \times \mathcal{B}_m$, then this pair is also the unique minimizer of the problem \eqref{oligopoly.Jm} over \(\mathcal{A}_m \times \mathcal{B}_m\). In particular, the optimal value is finite, 
so $(\Theta_m,\Sigma_m)\in\mathcal{S}_m$.
\end{lemma}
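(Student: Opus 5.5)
The plan is to solve the unconstrained problem by dynamic programming on the perceived scalar dynamics \eqref{dynamic.oligopoly.Ffm}, and to use the value function to certify optimality over all (even non-stationary) policies. For part (2) I will then restrict to the admissible box.

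\textbf{Step 1: Bellman ansatz.} Write $\beta:=(1-s')b'>0$ and $\gamma:=(1-s')a'$. Under the perceived model, $p_{t+1}=s'p_t+\gamma-\beta q_t+\epsilon_t$ with $\epsilon_t=-\beta\alpha_m v_t+\omega'_t$ centered and independent of $p_t$. The stage cost is $\mathbb{E}[(\tfrac12 q-p_{t+1}+c_m)q]$, whose conditional expectation equals $(\tfrac12+\beta)q^2-(s'p+\gamma-c_m)q$; the noise enters only through constants. I postulate the value function $V(p)=P p^2+Lp+N$ and write the Bellman equation
\[
V(p)=\min_q\Bigl\{(\tfrac12+\beta)q^2-(s'p+\gamma-c_m)q+\rho\,\mathbb{E}V(s'p+\gamma-\beta q+\epsilon)\Bigr\}.
\]
The bracket is quadratic in $q$ with leading coefficient $\tfrac12+\beta+\rho P\beta^2$. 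I will show this coefficient is positive, so the minimizer is linear in $p$, namely $q=-F p+f$. Matching the $p^2$ coefficients gives a scalar Riccati equation,
\[
P=\rho P s'^2-\frac{(s'(1+2\rho P\beta))^2}{4(\tfrac12+\beta+\rho P\beta^2)},
\]
which clears to the quadratic $\rho\beta^2P^2+(\tfrac12+\beta-\tfrac{\rho}{2}s'^2)P+\tfrac{s'^2}{4}=0$. The relevant root is the one in \eqref{oligopoly.Pm}, i.e.\ the root with the larger value, which is $\le 0$. Matching the $p$ and constant terms then gives $F$ and $f$ in \eqref{oligopoly.Phim}--\eqref{oligopoly.phim}. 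The factor $1-\rho(s'+\beta F)$ comes from solving the linear equation for $L$.

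\textbf{Step 2: well-posedness of the root (main obstacle).} I must show four things:
\begin{itemize}
\item the discriminant is nonnegative;
\item $\tfrac12+\beta+\rho P\beta^2>0$;
\item $\sqrt\rho\,|s'+\beta F|<1$ for the closed loop;
\item $1-\rho(s'+\beta F)\neq 0$.
\end{itemize}
The only hypotheses available are $\beta>0$ and $\sqrt\rho|s'|<1$. I would rewrite the discriminant as $(\tfrac12+\beta-\tfrac\rho2 s'^2)^2-\rho\beta^2s'^2$ and factor it as a difference of squares. Since $\rho s'^2<1$, I expect $\tfrac12+\beta-\tfrac\rho2 s'^2\ge\beta\sqrt\rho|s'|$, and this inequality reduces to $(\beta-\sqrt\rho|s'|/2\cdot\ldots)$-type AM-GM bounds. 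This is the delicate algebra.

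For the closed loop, I would compute $s'+\beta F=s'(\tfrac12)/(\tfrac12+\beta+\rho P\beta^2)\cdot\ldots$ and show that $\sqrt\rho$ times it lies in $(-1,1)$ using $P\le0$ and $P\ge -\frac{\frac12+\beta}{2\rho\beta^2}$. Stability then makes $V$ finite and $\rho^tV(p_t)\to0$.

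\textbf{Step 3: verification and uniqueness.} For an arbitrary admissible linear policy with finite cost, I would use the completed-square identity
\[
\text{cost}=V(p)+\sum_t\rho^t(\tfrac12+\beta+\rho P\beta^2)\,\mathbb{E}(q_t-q_t^*)^2
\]
plus a transversality term. When $\sqrt\rho|A_{\rm cl}|\ge1$ the cost is either infinite or the policy falls under Lemma~\ref{lemma.costfunction.oligopoly}; in the latter case I handle transversality directly, using $Q_m\ge0$ there. This identity gives optimality, and positivity of the leading coefficient gives uniqueness. Independence from $p$ and $\alpha_m$ is visible in the formulas.

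\textbf{Step 4: part (2).} Suppose $(F^{\Theta_m},f^{\Theta_m})\in\mathcal A_m\times\mathcal B_m$. Since it is the unique global minimizer over $\mathbb R^2$, it is also the unique minimizer over the subset. Its cost is finite, so the infimum in \eqref{def:feasible_set} is finite and above $-\infty$ for all $p$ and $\alpha_m$, which gives $(\Theta_m,\Sigma_m)\in\mathcal S_m$.
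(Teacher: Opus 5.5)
Your route is the paper's: derive the scalar Riccati equation from the Bellman equation, select the stabilizing root \eqref{oligopoly.Pm}, verify optimality by dynamic programming, and restrict to the box for part~(2). Your Step~1 algebra checks out: the Riccati quadratic and \eqref{oligopoly.Phim}--\eqref{oligopoly.phim} come out as claimed, including the factor $1-\rho(s'+\beta F)$ from the $L$-equation.

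The gap is in your Step~2, which you rightly flag as the crux but do not close. Closed-loop stability is what singles out \eqref{oligopoly.Pm}, and the bounds you propose cannot deliver it. Set $G:=\tfrac12+\beta+\rho P\beta^2$ and $x:=\sqrt\rho|s'|\in[0,1)$, with your $\beta=(1-s')b'$. Then $s'+\beta F=\frac{s'(1+\beta)}{2G}$, so stability means $2G>x(1+\beta)$. The bound $P\le 0$ only bounds $G$ from above, which is the wrong side. The bound $P\ge-\frac{1/2+\beta}{2\rho\beta^2}$ gives only $2G\ge\tfrac12+\beta$. That is smaller than $x(1+\beta)$ once $x>\frac{1/2+\beta}{1+\beta}$ (e.g.\ $x=0.9$, $\beta=0.1$), so it cannot certify stability.

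You need the exact root. Let $D:=(\tfrac12+\beta-\tfrac{x^2}{2})^2-\beta^2x^2$. By \eqref{oligopoly.Pm}, $2\rho\beta^2P=-(\tfrac12+\beta-\tfrac{x^2}{2})+\sqrt D$, hence $2G=\tfrac12+\beta+\tfrac{x^2}{2}+\sqrt D$ and
\[
2G-x(1+\beta)\;\ge\;\tfrac12(1-x)^2+\beta(1-x)\;>\;0 .
\]
The term $\tfrac{\rho}{2}s'^2$ you dropped from the vertex bound is exactly what makes this work. The paper reaches the same point via Vieta in its Step~1. For $s'\neq0$, $X=2G$ solves a quadratic whose two positive roots have product $\rho s'^2(1+\beta)^2$. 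So the larger root exceeds $\sqrt\rho|s'|(1+\beta)$ and the smaller is destabilizing, which also explains the choice of root. The discriminant needs no AM--GM: $D=(1-x^2)\bigl(\tfrac{1+x}{2}+\beta\bigr)\bigl(\tfrac{1-x}{2}+\beta\bigr)>0$. Finally, $\rho|s'+\beta F|<\sqrt\rho<1$ gives $1-\rho(s'+\beta F)>0$.

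In your Step~3, the appeal to Lemma~\ref{lemma.costfunction.oligopoly} is unnecessary and not yet justified. That lemma presupposes $(\Theta_m,\Sigma_m)\in\hat{\mathcal S}_m$, a condition stated over $\mathcal A_m$ rather than all $F\in\mathbb R$, and this is precisely what rules out cost $-\infty$. The paper's Step~2 handles it directly. Since $Q_m(F)=F\bigl((\tfrac12+\beta)F+s'\bigr)\le0$ only for $F$ between $0$ and $-s'/(\tfrac12+\beta)$, where $|s'+\beta F|\le|s'|<1/\sqrt\rho$, every $F$ with $\sqrt\rho|s'+\beta F|\ge1$ has $Q_m(F)>0$ and infinite cost. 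So your ``latter case'' is empty, and the completed-square identity is needed only for stable gains, where transversality is automatic.

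One minor point is shared with the paper. This divergence, and the step from $\mathbb E\bigl[((F_m^{\Theta_m}-F)p_t+f-f_m^{\Theta_m})^2\bigr]=0$ for all $t$ to $(F,f)=(F_m^{\Theta_m},f_m^{\Theta_m})$, both need non-degenerate perceived noise, $\sigma_m^2+\beta^2\alpha_m^2>0$. Without it, distinct pairs can generate the same trajectory, e.g.\ when $p$ is the stationary point of the optimal closed loop.
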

\begin{proof}{Proof}
We proceed in three steps.

\noindent\textbf{Step 1: Riccati equation and closed-loop stability.}
Consider the Riccati equation
\begin{equation}\label{eqn.riccati.oligopoly}
    h(P_m)=4\rho ((1 - s')b')^2 P_m^2 + 4\big[ (\frac12 +(1 - s')b') -  \frac{\rho}2 (s')^2 \big] P_m + (s')^2 = 0.
\end{equation}
Under condition~\eqref{eqn.oligopoly.riccati.well}, the discriminant of the quadratic~\eqref{eqn.riccati.oligopoly} satisfies $\Delta > 0$, and evaluating $h(P_m)$ at $P_m = 0$ and at $P_m = -\tfrac{\frac{1}{2}+(1 - s')b'}{\rho((1 - s')b')^2}$ shows $h(P_m) > 0$ at both endpoints. Hence Vieta's formulas yield two negative roots $-\tfrac{\frac{1}{2}+(1 - s')b'}{\rho((1 - s')b')^2}<P^{(2)}_m < P^{(1)}_m < 0$. Setting $X^{(i)} = 1 + 2(1 - s')b' + 2\rho P^{(i)}_m((1 - s')b')^2$ and 
substituting the relation into~\eqref{eqn.riccati.oligopoly} to obtain the 
quadratic equation satisfied by \(X^{(i)}\), Vieta's formulas give $X^{(1)}X^{(2)} = \rho(s'(1+(1 - s')b'))^2$, hence $X^{(1)} > \sqrt{\rho}|s'(1+(1 - s')b')| > X^{(2)} > 0$. Therefore $P_m$ in~\eqref{oligopoly.Pm} (i.e.\ $P_m^{(1)}$) is the unique root satisfying  the required closed-loop stability condition $\sqrt{\rho}|s' +(1 - s')b' F_m^{\Theta_m}| < 1$.

\noindent\textbf{Step 2: Identifying the minimizer.}
We distinguish two cases according to $F_m$.

First, by the characterization \eqref{oligopoly.Jm}, for any $F_m$ outside the interval
\(
\left(-\frac{s'+\frac{1}{\sqrt{\rho}}}{(1 - s')b'},\; \frac{\frac{1}{\sqrt{\rho}}-s'}{(1 - s')b'}\right),
\)
we have $Q_m(F_m)>0$, hence $J^{\Theta_m,\Sigma_m}_m((F_m,f_m);p,\alpha_m)=\infty$ for all $f_m$. Such strategies cannot be optimal.

Second, restrict attention to $F_m$ in this interval and admissible $f_m$. To identify the optimal strategy, introduce the auxiliary controlled dynamics
\begin{equation}\label{dynamic.ptm.Sigma'}
p_{t+1} = (1 - s')a' + s' p_t -(1 - s')b' \tilde{q}_t^m - (1 - s')b'\alpha_m v^m_t + \omega'_t
\end{equation}
for $\{\tilde{q}_t^m\}_{t\ge 0}\in\mathcal{U}^{\Theta_m}_{m}$, where
\[
\mathcal{U}^{\Theta_m}_{m}= \left\{ \tilde{q}_t^m, t\ge 0 \,\middle|\, \tilde{q}_t^m \text{ is } \mathcal{F}_t \text{-measurable}, 
\mathbb{E}\left[\sum_{t=0}^{\infty} \rho^{t}(p_t^2 + (\tilde{q}_t^m)^2)\right] < \infty \right\}.
\]
For any $F_m$ in this interval and any admissible $f_m$, the sequence $\{\tilde{q}_t^m\}_{t\ge 0}$ given by $\tilde{q}_t^m = -F_m p_t + f_m$ lies in $\mathcal{U}^{\Theta_m}_{m}$, and \eqref{dynamic.ptm.Sigma'} coincides with \eqref{dynamic.oligopoly.Ffm} under this $\{\tilde{q}_t^m\}_{t\ge 0}$. Standard dynamic programming applied to the auxiliary problem
\[
\inf_{(\tilde{q}_t^m)_{t \geq 0} \in\mathcal{U}^{\Theta_m}_{m}}\mathbb{E}\!\left[\sum_{t=0}^{\infty} \rho^{t} \left( \tfrac{1}{2} \tilde{q}^m_{t} - p_{t+1} + c_m \right) \tilde{q}^m_{t}\right],
\]
subject to \eqref{dynamic.ptm.Sigma'}, shows that the unique minimizer is the linear feedback strategy $\tilde{q}_t^{m,*}=-F_m^{\Theta_m}p_t+f_m^{\Theta_m}$.

Combining the two cases, $(F_m^{\Theta_m},f_m^{\Theta_m})$ is the unique minimizer of $J^{\Theta_m,\Sigma_m}_m((F_m,f_m);p,\alpha_m)$ over $\mathbb{R}\times\mathbb{R}$, completing the proof of part (1).

\noindent\textbf{Step 3: Constrained optimality.} Finally, if $(F_m^{\Theta_m},f_m^{\Theta_m})\in\mathcal{A}_m\times\mathcal{B}_m$, then it is admissible for the constrained problem. Since it is the unique minimizer over the larger unconstrained domain, it must also be the unique minimizer over the subset $\mathcal{A}_m\times\mathcal{B}_m$.  
\end{proof}

\subsection{Sufficient Conditions for Assumptions \ref{ass.def.bestresponsemap}, \ref{ass.Psi_contractive_selfmap}, and \ref{ass.uniform.Phimcontinuity}}\label{appendix.oligopoly.suff_condition}

In this section, we provide directly verifiable sufficient conditions under which Assumptions~\ref{ass.def.bestresponsemap}, \ref{ass.Psi_contractive_selfmap}, and \ref{ass.uniform.Phimcontinuity} hold for the Cournot competition model. We state two main conditions. Condition~\ref{condition.oligopoly.selfmap} ensures that each player's best response is well-defined, thereby establishing Assumption~\ref{ass.def.bestresponsemap} and guaranteeing the existence of a feedback Nash equilibrium; through a perturbation analysis, it also implies Assumption~\ref{ass.uniform.Phimcontinuity}. Building on Condition~\ref{condition.oligopoly.selfmap}, Condition~\ref{condition.oligopoly.contractivity} additionally guarantees the stability of this equilibrium, i.e., Assumption~\ref{ass.Psi_contractive_selfmap}. Together, the two conditions imply all three required assumptions. 
To facilitate the statement of the conditions, we first introduce the following constants:
\begin{align}
\underline{C}_{A,m} &:=s-(1-s)b\sum_{j\neq m} \kappa_{j},\quad 
        \overline{C}_{A,m}:=s+(1-s)b\sum_{j\neq m} \kappa_{j},  \label{def.barCAm} \\ 
        \underline{C}_G & :=\frac12 +(1 - s)b- \rho ((1 - s)b)^2 \overline{C}_P, \label{def.barCG} \\
\overline{C}_P &:=\frac{ \frac12(1-\rho \overline{C}_{A,m}^2) + (1-s)b - \sqrt{ \big( \frac12(1-\rho \overline{C}_{A,m}^2) + (1-s)b \big)^2 - \rho ((1-s)b)^2 \overline{C}_{A,m}^2 }}{2\rho ((1 - s)b)^2}, \label{def.barCP} \\
C_{f,m} &:= \frac{\big( ( 1-\rho \underline{C}_{A,m} ) \vee ( 2\rho (1-s)b \overline{C}_P +\rho \overline{C}_{A,m}-1) \big)(1-s)(a+b\sum\limits_{j\neq m} \kappa'_j)+( 1-\rho \underline{C}_{A,m} ) c_m}{2\underline{C}_G-\rho \overline{C}_{A,m} (1+(1 - s)b)}. \label{def.barCfm}
\end{align}

With these constants in place, we state the first condition.
\begin{condition}\label{condition.oligopoly.selfmap}
For all $1\le m \le M$, the following hold:
    \begin{enumerate}[1)]
        \item $\sqrt{\rho}\overline{C}_{A,m}<1$,
        \item $\dfrac{\overline{C}_{A,m}}{1 +2(1 - s)b- 2\rho ((1 - s)b)^2 \overline{C}_P} <\kappa_{m} $,
        \item $C_{f,m} <\kappa'_m $.
    \end{enumerate}
\end{condition}

Part 1) of Condition \ref{condition.oligopoly.selfmap} ensures that for each player, condition \eqref{eqn.oligopoly.riccati.well} holds under any admissible strategy profile of others, so that the unconstrained problem is well-posed and the unconstrained optimum exists. Parts 2) and 3) then guarantees this unconstrained optimal strategy falls strictly inside the admissible sets $\mathcal{A}_m\times\mathcal{B}_m$. 
Consequently, Condition~\ref{condition.oligopoly.selfmap} implies Assumption~\ref{ass.def.bestresponsemap}, as stated in Lemma~\ref{lemma.oligopoly.selfmap}; the same lemma also establishes the existence of a feedback Nash equilibrium, characterized by the system therein. Its proof is deferred to Appendix~\ref{appendix.additionalproofs2}.  For notational convenience, for any admissible strategy profile \((F_{-m},f_{-m})\) of the other players, define
\[
A^{(F_{-m})} = s+(1-s)b\sum_{j\neq m}F_j, \qquad
 A^{(f_{-m})}_0 = (1-s)a-(1-s)b\sum_{j\neq m}f_j.
\]

\begin{lemma}\label{lemma.oligopoly.selfmap}
    Under Condition~\ref{condition.oligopoly.selfmap}, the following hold:
    \begin{enumerate}[1)]
        \item Assumption~\ref{ass.def.bestresponsemap} holds with $(\psi_m(F_{-m},f_{-m}),\varphi_m(F_{-m},f_{-m}))$ given explicitly in \eqref{def.psivarphi.oligopoly} for all $1\le m \le M$ and $(F_{-m},f_{-m})\in\mathcal{A}_{-m}\times \mathcal{B}_{-m}$.
        \item There exists a feedback Nash equilibrium, characterized by the solution to the coupled system
        \begin{equation}\label{NE.oligopoly.system}
\begin{cases}
    F_m=-\dfrac{A^{(F_{-m})} (2\rho P_m (1 - s)b +1)}{2(\frac12 +(1 - s)b) + 2\rho P_m ((1 - s)b)^2},\\[6pt]
    f_m=\dfrac{ (1 - \rho A^{(F_{-m})})(A^{(f_{-m})}_0 - c_m) + 2\rho P_m (1 - s)b A^{(f_{-m})}_0 }
               { 2\big( (\frac12 +(1 - s)b) + \rho P_m ((1 - s)b)^2 \big) \bigl(1 - \rho (A^{(F_{-m})} +(1 - s)b F_m)\bigr) },\\[6pt]
    P_m=\dfrac{\frac{\rho}2  (A^{(F_{-m})})^2-(\frac12 +(1 - s)b)+\sqrt{\big[ (\frac12 +(1 - s)b) - \frac{\rho}2  (A^{(F_{-m})})^2 \big]^2-\rho ((1 - s)b)^2 (A^{(F_{-m})})^2}}
                {2\rho ((1 - s)b)^2},
\end{cases}
\end{equation}
for $1\le m \le M$.
\end{enumerate}
\end{lemma}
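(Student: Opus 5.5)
The plan is to reduce each player's complete-information best-response problem to the single-agent problem already solved in Lemma~\ref{lemma.riccati.oligopoly}, via the identity \eqref{eqn.relationship}. Fix $m$ and an admissible $(F_{-m},f_{-m})\in\mathcal{A}_{-m}\times\mathcal{B}_{-m}$. Under \eqref{state3}, the price dynamics faced by firm $m$ take the form \eqref{oligopoly.statisticalmodel} with effective parameters $s'$, $(1-s')a'$, $(1-s')b'$ given by $A^{(F_{-m})}$, $A_0^{(f_{-m})}$ and $(1-s)b$, with noise variance $\sigma_\omega^2$. Hence problem \eqref{obj.ne} coincides with \eqref{oligopoly.Jm} at $\alpha_m=0$ under these parameters. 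I then substitute $s'\mapsto A^{(F_{-m})}$, $(1-s')a'\mapsto A_0^{(f_{-m})}$, $(1-s')b'\mapsto(1-s)b$ in \eqref{oligopoly.Phim}--\eqref{oligopoly.Pm} and define $(\psi_m,\varphi_m)$ in \eqref{def.psivarphi.oligopoly} by these formulas.

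\textbf{Step 1: verifying \eqref{eqn.oligopoly.riccati.well}.} We have $(1-s)b>0$ trivially. Since $\|F_j\|\le\kappa_j$, $A^{(F_{-m})}\in[\underline{C}_{A,m},\overline{C}_{A,m}]$. Using $s>0$ one wants $|A^{(F_{-m})}|\le\overline{C}_{A,m}$; if $\underline{C}_{A,m}<0$, note $|\underline{C}_{A,m}|\le\overline{C}_{A,m}$ anyway. Then Condition~\ref{condition.oligopoly.selfmap}~1) gives $\sqrt{\rho}|A^{(F_{-m})}|<1$. Lemma~\ref{lemma.riccati.oligopoly}(1) therefore yields a unique unconstrained minimizer, and its formula is visibly independent of the initial price $p$.

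\textbf{Step 2: interiority (the main obstacle).} I need $|F_m^{\Theta}|<\kappa_m$ and $|f_m^{\Theta}|<\kappa_m'$ uniformly over $(F_{-m},f_{-m})$, so that Lemma~\ref{lemma.riccati.oligopoly}(2) applies. The approach is to show that $P_m$ in \eqref{oligopoly.Pm} is monotone in $(A^{(F_{-m})})^2$, so that $P_m\ge -\overline{C}_P$ (the root at the worst case $\overline{C}_{A,m}$; the sign convention must be checked against \eqref{def.barCP}). This gives a denominator lower bound $1+2(1-s)b+2\rho P_m((1-s)b)^2\ge 1+2(1-s)b-2\rho((1-s)b)^2\overline{C}_P=2\underline{C}_G$. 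For the numerator of $F_m$, I bound $|2\rho P_m(1-s)b+1|\le1$ using $P_m<0$ and $X^{(1)}>0$ from Step~1 of the earlier proof. Condition~2) then gives $|F_m|<\kappa_m$.

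For $f_m$, I bound the numerator by $(1-\rho\underline{C}_{A,m})(|A_0|+c_m)$ plus the $P_m$ term, with $|A_0^{(f_{-m})}|\le(1-s)(a+b\sum_{j\ne m}\kappa_j')$. I lower-bound $1-\rho(A+(1-s)bF_m)$ by rewriting $A+(1-s)bF_m$ as $A\cdot(1+2(1-s)b\cdots)$ over the denominator. After this, the product of the two denominators should be at least $2\underline{C}_G-\rho\overline{C}_{A,m}(1+(1-s)b)$, matching $C_{f,m}$, and Condition~3) closes the argument. I expect this algebra, especially matching the $\vee$ in \eqref{def.barCfm}, to take the most care.

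\textbf{Step 3: continuity and existence.} The formulas are compositions of continuous functions of $(F_{-m},f_{-m})$: the square root has a positive discriminant by Step~1, and all denominators are bounded away from zero by Step~2. This gives continuity and hence Assumption~\ref{ass.def.bestresponsemap}, with uniqueness coming from Lemma~\ref{lemma.riccati.oligopoly}(2). Part~2) then follows from Lemma~\ref{lemma:characterize_ne}: a fixed point of $\Psi$ exists and is a feedback Nash equilibrium. Writing the fixed-point equations $F_m=\psi_m(F_{-m},f_{-m})$ and $f_m=\varphi_m(F_{-m},f_{-m})$ explicitly gives \eqref{NE.oligopoly.system}. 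In the $f_m$ denominator there, $A^{(F_{-m})}+(1-s)bF_m$ is exactly the substituted $s'+(1-s')b'F_m^{\Theta_m}$.
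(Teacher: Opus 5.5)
Your route is the paper's: reduce via \eqref{eqn.relationship} to the perceived problem at $s'=A^{(F_{-m})}$, $(1-s')a'=A_0^{(f_{-m})}$, $(1-s')b'=(1-s)b$; bound $P_m$, $F_m$, $f_m$ to get interiority; then conclude with Lemma~\ref{lemma.riccati.oligopoly}(2) and Brouwer/Lemma~\ref{lemma:characterize_ne}. However, two estimates in your Step~2 do not go through as justified.

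\textbf{The bound on $F_m$.} With $\beta=(1-s)b$, the bound $|1+2\rho P_m\beta|\le 1$ does not follow from $P_m<0$ and $X^{(1)}>0$. The condition $X^{(1)}>0$ only gives $1+2\rho P_m\beta>-1-1/\beta$. The missing ingredient, which the paper asserts, is $2\rho\beta\,\overline{C}_P<1$. Combined with $-\overline{C}_P\le P_m\le 0$, it gives $0<1+2\rho P_m\beta\le 1$. It follows from the same rationalization you need for monotonicity. Put $A=A^{(F_{-m})}$ and $D=\tfrac12(1-\rho A^2)+\beta$. Then $P_m=-A^2/\bigl(2(D+\sqrt{D^2-\rho\beta^2A^2})\bigr)$, so $2\rho\beta|P_m|\le \rho\beta A^2/D<\beta/D<1$, using $\rho A^2<1$ and $D>\beta$.

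\textbf{The numerator of $f_m$.} If you split off the $P_m$-term by the triangle inequality, the coefficient of $|A_0^{(f_{-m})}|$ becomes $1-\rho\underline{C}_{A,m}+2\rho\beta\overline{C}_P$. This is strictly larger than the coefficient $(1-\rho\underline{C}_{A,m})\vee(2\rho\beta\overline{C}_P+\rho\overline{C}_{A,m}-1)$ in \eqref{def.barCfm}, so Condition~\ref{condition.oligopoly.selfmap}~3) would not close the argument. Instead, write the numerator as $A_0^{(f_{-m})}\bigl(1-\rho A+2\rho P_m\beta\bigr)-c_m(1-\rho A)$. The two parts of the bracket have opposite signs, so the bracket lies in $[1-\rho\overline{C}_{A,m}-2\rho\beta\overline{C}_P,\;1-\rho\underline{C}_{A,m}]$; this is exactly where the $\vee$ comes from. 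Also $0<1-\rho A\le 1-\rho\underline{C}_{A,m}$.

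\textbf{The denominator of $f_m$.} The identity is $A+\beta F_m=A(1+\beta)/X$ with $X=1+2\beta+2\rho P_m\beta^2$. Hence the product of the two denominators is $X-\rho A(1+\beta)\ge 2\underline{C}_G-\rho\overline{C}_{A,m}(1+\beta)$. You also need this lower bound to be positive, which you did not address. It holds because $2\underline{C}_G$ equals $X^{(1)}$ evaluated at $s'=\overline{C}_{A,m}$. The Vieta argument in the proof of Lemma~\ref{lemma.riccati.oligopoly} gives $X^{(1)}>\sqrt{\rho}\,\overline{C}_{A,m}(1+\beta)\ge\rho\,\overline{C}_{A,m}(1+\beta)$.

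With these repairs, the rest of your argument goes through as in the paper: continuity, uniqueness and independence of the initial price, existence of a fixed point, and reading off \eqref{NE.oligopoly.system}.
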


Via a perturbation argument, Condition~\ref{condition.oligopoly.selfmap} also establishes Assumption~\ref{ass.uniform.Phimcontinuity}, as stated in Lemma~\ref{lemma.uniform.Phimcontinuity} below; its proof is deferred to Appendix~\ref{appendix.additionalproofs2}.

\begin{lemma}\label{lemma.uniform.Phimcontinuity}
    Under Condition~\ref{condition.oligopoly.selfmap}, 
    Assumption~\ref{ass.uniform.Phimcontinuity} holds.
\end{lemma}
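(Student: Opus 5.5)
The plan is to prove Lemma~\ref{lemma.uniform.Phimcontinuity} by a perturbation argument built on the closed-form characterization in Lemma~\ref{lemma.riccati.oligopoly}. Fix $m$ and an admissible profile $(F_{-m},f_{-m})$. The reference parameter $\Theta^{(F_{-m},f_{-m})}_m$ corresponds to perceived coefficients with $s'=A^{(F_{-m})}$, $(1-s')b'=(1-s)b$ and $(1-s')a'=A^{(f_{-m})}_0$. Throughout, I would work with the "reduced" coordinates $(\theta_0,\theta_1,\theta_2)=((1-s')a',s',-(1-s')b')$, i.e.\ the entries of $\Theta_m$ directly. All formulas \eqref{oligopoly.Phim}--\eqref{oligopoly.Pm} are rational or square-root expressions in these entries, and none depends on $\Sigma_m$ or $\alpha_m$. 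This is because, by \eqref{oligopoly.Jm}, $\sigma_m^2$ and $\alpha_m$ enter only additively through constants independent of $(F_m,f_m)$. Hence the Lipschitz bound in $\Delta_m$ reduces to Lipschitz dependence on $\Theta_m$ alone.

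\textbf{Step 1 (compact reference set with margins).} Let $\mathcal{K}_m$ be the set of all reference parameters $\Theta^{(F_{-m},f_{-m})}_m$ as $(F_{-m},f_{-m})$ ranges over $\mathcal{A}_{-m}\times\mathcal{B}_{-m}$. This set is compact, with $s'\in[\underline{C}_{A,m},\overline{C}_{A,m}]$, $(1-s')b'=(1-s)b>0$ and $|(1-s')a'|\le (1-s)(a+b\sum_{j\neq m}\kappa'_j)$. By Condition~\ref{condition.oligopoly.selfmap} 1), every point of $\mathcal{K}_m$ satisfies \eqref{eqn.oligopoly.riccati.well} with strict, uniform margins. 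Following the bounds in the proof of Lemma~\ref{lemma.oligopoly.selfmap}, I would use Conditions~\ref{condition.oligopoly.selfmap} 2)--3) to get $|F_m^{\Theta}|\le \kappa_m-\epsilon$ and $|f_m^{\Theta}|\le\kappa'_m-\epsilon$ for some $\epsilon>0$ uniformly on $\mathcal{K}_m$. The strictness of the inequalities together with compactness gives these margins.

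\textbf{Step 2 (uniform neighborhood).} On an open $\delta$-neighborhood $\mathcal{N}_\delta$ of $\mathcal{K}_m$ with $\delta$ small, the following quantities stay bounded away from zero, by continuity and compactness:
\begin{itemize}
\item the margins in \eqref{eqn.oligopoly.riccati.well};
\item the discriminant inside the square root in \eqref{oligopoly.Pm};
\item $(1-s')b'$, which appears in the denominator of $P^{\Theta}_m$;
\item the factors $1+2(1-s')b'+2\rho P((1-s')b')^2$ (positive by the Vieta argument, $X^{(1)}>0$);
\item $1-\rho(s'+(1-s')b'F^{\Theta}_m)$ (positive by closed-loop stability).
\end{itemize}
Hence $\Theta\mapsto(F^{\Theta}_m,f^{\Theta}_m)$ is $C^1$ on the closure of $\mathcal{N}_{\delta}$, with derivative bounded by some $L_m$. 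Shrinking $\delta$, and using the mean value theorem along segments (which stay in $\mathcal{N}_\delta$ since we compare to a center in $\mathcal{K}_m$), the image stays inside $\mathcal{A}_m\times\mathcal{B}_m$ by the $\epsilon$-margin.

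\textbf{Step 3 (conclusion).} For $\|\tilde\Theta_m-\Theta^{(F_{-m},f_{-m})}_m\|\le\delta_m:=\delta$, $\|\tilde\Sigma_m-\Sigma_\omega\|\le\delta_m$ and $\alpha_m\in[0,\delta_m]$, Lemma~\ref{lemma.riccati.oligopoly}(2) gives that $(F^{\tilde\Theta}_m,f^{\tilde\Theta}_m)$ is the unique constrained minimizer, independent of $x$. It also gives $(\tilde\Theta_m,\tilde\Sigma_m)\in\mathcal{S}_m$, which proves part (a); we may need $\tilde\Sigma_m\ge0$, which holds since $\Sigma_\omega>0$ and $\delta$ is small. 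The same lemma applied at the reference point identifies $\Phi_m(\Theta^{(F_{-m},f_{-m})}_m,\Sigma_\omega,x,0)$. The Lipschitz bound $L_m\|\tilde\Theta_m-\Theta^{(F_{-m},f_{-m})}_m\|\le L_m\Delta_m$ then follows from Step 2, which proves part (b).

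\textbf{Main obstacle.} I expect the main difficulty to be Step 1: extracting uniform strict interior margins for $(F^\Theta_m,f^\Theta_m)$ from Conditions~\ref{condition.oligopoly.selfmap} 2)--3). This requires monotonicity bounds of $P^\Theta_m$ in $s'$, namely $P^\Theta_m\ge-\overline{C}_P$ when $|s'|\le\overline{C}_{A,m}$, and I would derive these from the Riccati equation \eqref{eqn.riccati.oligopoly}.
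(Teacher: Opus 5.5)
Your proposal is correct and is essentially the paper's own perturbation argument, written out in more detail. Strict inequalities from Lemma~\ref{lemma.oligopoly.selfmap} plus compactness of $\mathcal{A}_{-m}\times\mathcal{B}_{-m}$ give a uniform $\delta_m$, and smoothness of \eqref{oligopoly.Phim}--\eqref{oligopoly.Pm}, with denominators bounded away from zero, gives a uniform $L_m$.

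The uniform margins you flag as the main obstacle are already supplied by the proof of Lemma~\ref{lemma.oligopoly.selfmap}. They are the fixed positive gaps $\kappa_m-\overline{C}_{A,m}/(2\underline{C}_G)$ and $\kappa'_m-C_{f,m}$, so no extra work is needed there.

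One small correction: $\sigma_m^2$ and $\alpha_m$ do not enter \eqref{oligopoly.Jm} only through $(F_m,f_m)$-independent constants. The perceived noise variance feeds into $Q_m(F_m)\mathbb{E}[p_t^2]$ through $A^m_{\mathrm{cl}}(F_m)$, which depends on $F_m$. Instead, take the independence of the minimizer from $(\Sigma_m,\alpha_m)$ directly from Lemma~\ref{lemma.riccati.oligopoly}, whose closed forms hold at every noise level (certainty equivalence).
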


To find a condition that guarantees the stability of the equilibrium characterized in Lemma~\ref{lemma.oligopoly.selfmap}, we introduce the following constants:
\begin{align}
    L_{P,m} &:= \frac{\big(|A^{(F^*_{-m})}| + \overline{C}_{A,m}\big)(1 + 2\rho \overline{C}_P)}{2 + 4(1-s)b - 2\rho \big(A^{(F^*_{-m})}\big)^2 + 4\rho ((1-s)b)^2 \big(P_m^* - \overline{C}_P\big)}, \label{def.LPm} \\
    L_{F,m} &= \frac{|A^{(F^*_{-m})}| \rho (1-s)b (1+(1-s)b) L_{P,m}}
                       {2\underline{C}_G \big|\frac12 + (1-s)b + \rho P_m^* ((1-s)b)^2\big|} + \frac{1}{2\underline{C}_G}, \label{def.LFm} \\
    L_{f,m} &:= \frac{\big|1 - \rho A^{(F^*_{-m})} + 2\rho P_m^* (1-s)b\big| }{2 \big|\frac12 + (1-s)b + \rho P_m^* ((1-s)b)^2\big| \;\big|1 - \rho\big(A^{(F^*_{-m})} + (1-s)b F^*_{m}\big)\big|}. \label{def.Lfm} 
\end{align}
With these constants in place, we state the contraction condition.
\begin{condition}\label{condition.oligopoly.contractivity}
Let $(F^*,f^*,\{P^*_m\}_{1\le m \le M})$ be a solution of \eqref{NE.oligopoly.system}. For all $1\le m \le M$, assume
    \begin{enumerate}[1)]
        \item $4\big(\frac12 + (1-s)b\big) - 2\rho \big(A^{(F^*_{-m})}\big)^2 + 4\rho ((1-s)b)^2 \big(P_m^* - \overline{C}_P\big) > 0$,
        \item $(1-s)b \sum\limits_{j\neq m} L_{F,j}  < 1$, 
        \item $(1-s)b \sum\limits_{j\neq m} L_{f,j}  < 1$,
    \end{enumerate}
    where $\overline{C}_P$, $L_{F,m}$, and $L_{f,m}$ are given by \eqref{def.barCP}, \eqref{def.LFm}, and \eqref{def.Lfm}, respectively.
\end{condition}

Condition \ref{condition.oligopoly.contractivity} is expressed solely in terms of the model parameters and the equilibrium quantities themselves. Part 1) ensures a non‑degenerate denominator in the Lipschitz estimates, while parts 2) and 3) impose that the cumulative influence of other players through their feedback gains and offsets remains contracting. Lemma \ref{lemma.contraction} below formalizes this implication, giving the stability coefficient \(\zeta\) explicitly; we omit the explicit expression for \(\mu\), which does not affect the convergence rate, for brevity. Its proof is deferred to Appendix~\ref{appendix.additionalproofs2}.

\begin{lemma}\label{lemma.contraction}
    Under Conditions \ref{condition.oligopoly.selfmap} and \ref{condition.oligopoly.contractivity}, there exists a constant $\mu>0$ (depending on the model parameters) such that Assumption \ref{ass.Psi_contractive_selfmap} holds with $\zeta$ given by
    \begin{align}\label{def.oligopoly.zeta}
\zeta = \max\left\{
   \frac12+\frac12\max_{1\le j\le M} \Big\{(1-s)b \sum_{m\neq j} L_{F,m}\Big\},\;
   \max_{1\le j\le M} \Big\{(1-s)b \sum_{m\neq j} L_{f,m}\Big\}
   \right\}. 
\end{align}
\end{lemma}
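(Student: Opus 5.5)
Under Conditions~\ref{condition.oligopoly.selfmap} and \ref{condition.oligopoly.contractivity}, the plan is to verify the weighted contraction $\|\Psi(F,f)-\Psi(F^*,f^*)\|_{\mu,1,2}\le\zeta\|(F,f)-(F^*,f^*)\|_{\mu,1,2}$ component by component. By Lemma~\ref{lemma.oligopoly.selfmap}, for every admissible $(F_{-m},f_{-m})$ the best response is the interior unconstrained minimizer given by \eqref{NE.oligopoly.system}, with $A^{(F_{-m})}$ in place of $s'$ and $A^{(f_{-m})}_0$ in place of $(1-s')a'$. So $\psi_m$ depends on $F_{-m}$ only through the scalar $A^{(F_{-m})}$. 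The offset $\varphi_m$ depends on $f_{-m}$ only through $A_0^{(f_{-m})}$, and it is affine in this quantity; it also depends on $F_{-m}$ through $A^{(F_{-m})}$, $P_m$ and $\psi_m$. Since $|A^{(F_{-m})}-A^{(F^*_{-m})}|\le(1-s)b\sum_{j\ne m}|F_j-F_j^*|$ and $|A_0^{(f_{-m})}-A_0^{(f^*_{-m})}|\le(1-s)b\sum_{j\ne m}|f_j-f_j^*|$, it suffices to obtain scalar Lipschitz bounds, with respect to $A$ and $A_0$, centered at the equilibrium.

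\textbf{Step 1: the $P_m$ and $F$ components.} Write $P_m(A)$ as the stabilizing root of the Riccati quadratic \eqref{eqn.riccati.oligopoly} with $s'$ replaced by $A$. Subtracting the equations $h_A(P_m(A))=0$ and $h_{A^*}(P^*_m)=0$ gives
\[
(P_m-P_m^*)\bigl[4\rho((1-s)b)^2(P_m+P_m^*)+4(\tfrac12+(1-s)b)-2\rho A^2\bigr]=(A^2-A^{*2})(2\rho P_m^*-1).
\]
On the admissible range $|A|\le\overline{C}_{A,m}$ one has $P_m\ge-\overline{C}_P$, using monotonicity of the root in $|A|$, and Condition~\ref{condition.oligopoly.contractivity}(1) then bounds the bracket away from zero. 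This yields $|P_m-P_m^*|\le L_{P,m}|A-A^*|$. Next, write $\psi_m=-A(2\rho P_m(1-s)b+1)/(2G_m)$ with $G_m=\tfrac12+(1-s)b+\rho P_m((1-s)b)^2\ge\underline{C}_G$, and do the usual add-and-subtract. This gives $|\psi_m(F_{-m})-F_m^*|\le L_{F,m}|A-A^*|\le L_{F,m}(1-s)b\sum_{j\ne m}|F_j-F_j^*|$. Summing over $m$ and exchanging the sums shows that the $F$-part contracts with factor $\zeta_F:=\max_j(1-s)b\sum_{m\ne j}L_{F,m}<1$, by Condition~\ref{condition.oligopoly.contractivity}(2).

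\textbf{Step 2: the $f$ component and the weight $\mu$.} Split
\[
\varphi_m(F_{-m},f_{-m})-f_m^*=[\varphi_m(F_{-m},f_{-m})-\varphi_m(F_{-m},f^*_{-m})]+[\varphi_m(F_{-m},f^*_{-m})-\varphi_m(F^*_{-m},f^*_{-m})].
\]
The first bracket is exactly linear in $A_0$, with coefficient $(1-\rho A+2\rho P_m(1-s)b)/(2G_m(1-\rho(A+(1-s)bF_m)))$. Bounding this coefficient gives $L_{f,m}$; here I expect to need some care, because \eqref{def.Lfm} evaluates the coefficient at the equilibrium, so either the coefficient is handled by the exact identity at $F^*_{-m}$ or its deviation is absorbed into the cross term. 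The second bracket is bounded by $L'_m\sum_{j\ne m}|F_j-F_j^*|$ for some constant $L'_m$. This follows from the smoothness of the closed form on the compact admissible set: the denominators are bounded below by Condition~\ref{condition.oligopoly.selfmap}(1), which gives $\rho|A+(1-s)bF|<1$, and by $G_m\ge\underline{C}_G>0$. Summing, $\sum_m|\varphi_m-f_m^*|\le\zeta_f\sum_j|f_j-f_j^*|+L'\sum_j|F_j-F_j^*|$ with $\zeta_f<1$, by Condition~\ref{condition.oligopoly.contractivity}(3).

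\textbf{Step 3: assembly.} Combining the two steps,
\[
\|\Psi-\Psi^*\|_{\mu,1,2}\le(\mu\zeta_F+L')\sum_j\|F_j-F_j^*\|+\zeta_f\sum_j\|f_j-f_j^*\|_2 .
\]
Choose $\mu$ large enough that $\mu\zeta_F+L'\le\mu\cdot\tfrac{1+\zeta_F}{2}$, namely $\mu=2L'/(1-\zeta_F)$. This yields the contraction factor $\max\{\tfrac12+\tfrac12\zeta_F,\zeta_f\}$, which is the $\zeta$ in \eqref{def.oligopoly.zeta}. The main obstacle is Step 1's Riccati perturbation bound. It needs uniform control of $P_m$ over all admissible $A$, via $\overline{C}_P$ and the positivity in Condition~\ref{condition.oligopoly.contractivity}(1), and it needs the correct root to be selected throughout.
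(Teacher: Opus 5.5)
Your proposal is essentially the paper's proof. It uses the same chain of Lipschitz bounds for $P_m$, then $\psi_m$, then $\varphi_m$, all in terms of the scalars $A^{(F_{-m})}$ and $A_0^{(f_{-m})}$. It then sums over players and chooses $\mu=2L'/(1-\zeta_F)$, which is exactly the weight in the paper's Step~4. One step needs a small correction to land on the stated constants.

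\textbf{The Riccati differencing in Step 1.} Write $A^*=A^{(F^*_{-m})}$. You expanded
$A^2P_m-(A^*)^2P_m^*=A^2(P_m-P_m^*)+(A^2-(A^*)^2)P_m^*$.
This leaves $-2\rho A^2$ in the bracket, evaluated at the perturbed point. Condition~\ref{condition.oligopoly.contractivity}(1) only controls $-2\rho (A^*)^2$. So it does not bound your bracket away from zero when $|A|>|A^*|$, and you would not obtain \eqref{def.LPm}.

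Expand instead as $(A^*)^2(P_m-P_m^*)+(A^2-(A^*)^2)P_m$. The bracket then becomes $4\rho((1-s)b)^2(P_m+P_m^*)+2+4(1-s)b-2\rho (A^*)^2$. By $P_m\ge-\overline{C}_P$ and Condition~\ref{condition.oligopoly.contractivity}(1), this is bounded below by the denominator of \eqref{def.LPm}. The right-hand side satisfies $|A+A^*|\,|2\rho P_m-1|\le(|A^*|+\overline{C}_{A,m})(1+2\rho\overline{C}_P)$, which is the numerator of \eqref{def.LPm}. The paper's split is this one.

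\textbf{The offset in Step 2.} Both of your options work. The first one, splitting at $F^*_{-m}$, matches \eqref{def.Lfm} directly: the coefficient of $A_0^{(f_{-m})}-A_0^{(f^*_{-m})}$ is then exactly the equilibrium quantity $L_{f,m}$. The remaining difference in $A$ is Lipschitz for two reasons. First, $|A_0^{(f_{-m})}|\le(1-s)(a+b\sum_{j\neq m}\kappa'_j)$. Second, the denominators are uniformly bounded below, using $G_m\ge\underline{C}_G$ and closed-loop stability.
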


\subsection{Proofs of Auxiliary Lemmas in Appendix \ref{appendix.oligopoly}}\label{appendix.additionalproofs2}

\subsubsection{Proof of Lemma \ref{lemma.oligopoly.feasibleset}}\label{sec.feasibleset.oligopoly}

\begin{proof}{Proof}
We prove the two inclusions separately.

\noindent\textbf{Part 1: \(\hat{\mathcal{S}}_m \subseteq \mathcal{S}_m\).}
Take any \((\Theta_m,\Sigma_m) \in \hat{\mathcal{S}}_m\). For all $p$ and $\alpha_m \ge 0$, the cost at \((0,0)\) is finite, so the infimum is \(< \infty\). For any admissible \((F_m,f_m)\), the moments satisfy \(\mathbb{E}[p_t] \asymp (A^m_{\mathrm{cl}}(F_m))^t\) and \(\mathbb{E}[p_t^2] \asymp (A^m_{\mathrm{cl}}(F_m))^{2t}\). Consequently, in \eqref{oligopoly.Jm}, the term with \(Q_m(F_m)\) converges if \(Q_m(F_m)=0\) or \(\rho (A^m_{\mathrm{cl}})^2<1\); the term with \(R_m(F_m,f_m)\) converges if \(R_m(F_m,f_m)=0\) or \(\rho|A^m_{\mathrm{cl}}|<1\).

Now we examine each sign of \(Q_m(F_m)\). If \(Q_m(F_m)\le 0\), the definition of \(\hat{\mathcal{S}}_m\) ensures that the cost is finite. If \(Q_m(F_m)>0\), then when \(\sqrt{\rho}|A^m_{\mathrm{cl}}|<1\) the cost is finite; otherwise it diverges to \(+\infty\) because the positive \(Q_m\)-term dominates. Thus the cost is never \(-\infty\). Lower semicontinuity on the compact set \(\mathcal{A}_m\times\mathcal{B}_m\) implies the infimum is attained and finite. Therefore \((\Theta_m,\Sigma_m)\in\mathcal{S}_m\).

\noindent\textbf{Part 2: \(\mathcal{S}_m \subseteq \hat{\mathcal{S}}_m\).}
We prove the contrapositive. Suppose \((\Theta_m,\Sigma_m)\notin\hat{\mathcal{S}}_m\). Then there exists \(F_m\in\mathcal{A}_m\) violating the condition.
\begin{enumerate}[1)]
    \item \(Q_m(F_m)<0\) but \(\sqrt{\rho}|A^m_{\mathrm{cl}}|\ge 1\): then \(\sum_t \rho^t Q_m(F_m)\mathbb{E}[p_t^2]\) diverges to \(-\infty\), so the infimum is \(-\infty\).
    \item \(Q_m(F_m)=0\) but \(\rho|A^m_{\mathrm{cl}}|\ge 1\): choose an admissible \(f_m\) with \(R_m(F_m,f_m)\neq0\) (possible because \(R_m\) is affine in \(f_m\)); then the linear term diverges to \(-\infty\) for a suitable initial price \(p\), making the infimum \(-\infty\).
\end{enumerate}
In either case \((\Theta_m,\Sigma_m)\notin\mathcal{S}_m\). Hence \(\mathcal{S}_m\subseteq\hat{\mathcal{S}}_m\).

Both inclusions yield \(\mathcal{S}_m=\hat{\mathcal{S}}_m\).  
\end{proof}

\subsubsection{Proof of Lemma \ref{lemma.oligopoly.selfmap}}

\begin{proof}{Proof}
We first prove part 1). For each player, under any admissible strategy profile of others, condition \eqref{eqn.oligopoly.riccati.well} holds, so that Lemma \ref{lemma.riccati.oligopoly} applies, and the unconstrained optimum exists. We then track how bounds propagate along the chain 
\eqref{oligopoly.Pm}$\to$\eqref{oligopoly.Phim}$\to$\eqref{oligopoly.phim}, showing at each step that the output remains strictly inside the admissible set. Part 2) then follows from Brouwer's fixed-point theorem, using the continuity of the best-response map established in part 1).

We take player $m$ as a representative player. 
For any admissible strategy profile 
$(F_{-m},f_{-m})$ of the other players, let $\Theta^{(F_{-m},f_{-m})}_m = [A^{(f_{-m})}_0,\, A^{(F_{-m})},\, -(1-s)b]^\top$ 
as in~\eqref{def.Thetam.Ff}. 
By replacing \((F_{-m}^{(k-1)}, f_{-m}^{(k-1)})\) in \eqref{eqn.relationship} with an arbitrary \((F_{-m},f_{-m})\in\mathcal{A}_{-m}\times\mathcal{B}_{-m}\) (and \(\Theta_m^{(k),*}\) with the corresponding \(\Theta_m^{(F_{-m},f_{-m})}\)), the minimizer of the complete-information problem \(J_m((F_m,f_m); (F_{-m},f_{-m}), p)\) coincides with that of the control problem \eqref{oligopoly.Jm} with the perceived parameter pair \((\Theta_m^{(F_{-m},f_{-m})},\Sigma_{\omega})\). 
Thus, it suffices to analyze the latter, which is the focus of the rest of this subsection. By definition,    
$\underline{C}_{A,m} \le A^{(F_{-m})} \le \overline{C}_{A,m}$, 
so Part 1) of Condition \ref{condition.oligopoly.selfmap} gives 
\begin{equation}\label{condition.riccati.oligopoly}
    \sqrt{\rho}|A^{(F_{-m})}|<1,\quad (1-s)b>0,
\end{equation}
which means condition \eqref{eqn.oligopoly.riccati.well} holds. Lemma \ref{lemma.riccati.oligopoly} therefore applies and 
$(F_m^{\Theta^{(F_{-m},f_{-m})}_m},\, f_m^{\Theta^{(F_{-m},f_{-m})}_m})$ 
attains the minimum over $\mathbb{R}\times\mathbb{R}$. 

\noindent\textbf{Step 1: Bound on $P_m$.} 
Since $|A^{(F_{-m})}|\le \overline{C}_{A,m}$, 
one verifies by rationalizing the numerator of~\eqref{oligopoly.Pm} that 
the expression is monotone increasing in $|A^{(F_{-m})}|$. 
Hence $P_m^{\Theta^{(F_{-m},f_{-m})}_m}$ is negative and bounded below by $\overline{C}_P$ in \eqref{def.barCP}. With \(\underline{C}_G\) as defined in \eqref{def.barCG}, we obtain
\begin{equation}\label{oligopoly.estimatePG}
     -\overline{C}_P \le P_m^{\Theta^{(F_{-m},f_{-m})}_m} < 0, \quad\frac12 +(1 - s)b+ \rho P_m^{\Theta^{ (F_{-m},f_{-m})}_m} ((1 - s)b)^2\ge \underline{C}_G>0.
\end{equation}

\noindent\textbf{Step 2: Bound on $F_m$.} 
Since $0>2\rho P_m^{\Theta^{ (F_{-m},f_{-m})}_m}(1-s)b\ge -2\rho(1-s)b\overline{C}_P>-1$, from \eqref{oligopoly.Phim} we obtain the estimate $\bigl|F_m^{\Theta^{ (F_{-m},f_{-m})}_m}\bigr| < \frac{\overline{C}_{A,m}}{2\underline{C}_G}$. Part 2) of Condition \ref{condition.oligopoly.selfmap} then yields 
\begin{equation}\label{oligopoly.estimate.F}
   - \kappa_{m}< F^{\Theta^{ (F_{-m},f_{-m})}_m}_m <\kappa_{m}.
\end{equation}

\noindent\textbf{Step 3: Bound on $f_m$.} 
Using~\eqref{oligopoly.Phim}, we have
\begin{equation*}
A^{(F_{-m})} +(1 - s)b F_m^{\Theta^{ (F_{-m},f_{-m})}_m}
   = \frac{A^{(F_{-m})} (1+(1 - s)b)}{2\big( (\frac12 +(1 - s)b) + \rho P_m^{\Theta^{ (F_{-m},f_{-m})}_m} ((1 - s)b)^2 \big)}.
\end{equation*}
Together with $|A^{(F_{-m})}|\le \overline{C}_{A,m}$ and~\eqref{oligopoly.estimatePG}, 
the denominator of~\eqref{oligopoly.phim} satisfies
{\small
\begin{equation}\label{oligopoly.estimatef1}
   2\big( (\frac12 +(1 - s)b) + \rho P^{\Theta^{ (F_{-m},f_{-m})}_m}_m ((1 - s)b)^2 \big) (1 - \rho (A^{(F_{-m})} +(1 - s)b F^{\Theta^{ (F_{-m},f_{-m})}_m}_m)) \ge 2\underline{C}_G-\rho \overline{C}_{A,m} (1+(1 - s)b),
\end{equation}
}
where the positivity of the lower bound follows directly from the definitions of $\overline{C}_P$ and $\underline{C}_G$. Combining this with the bounds on $A^{(f_{-m})}_0$, $|A^{(f_{-m})}_0|\le (1-s)(a+b\sum\limits_{j\neq m}\kappa'_j)$, and \eqref{oligopoly.estimatePG}, we obtain from \eqref{oligopoly.phim} the estimate $|f_m^{\Theta^{(F_{-m},f_{-m})}_m}|\le C_{f,m}$, where $C_{f,m}$ is given by \eqref{def.barCfm}. Part 3) of Condition \ref{condition.oligopoly.selfmap} then guarantees 
\begin{equation}\label{oligopoly.estimate.f}
    -\kappa'_m<f^{\Theta^{ (F_{-m},f_{-m})}}_m<\kappa'_m.
\end{equation}

By part (2) of Lemma~\ref{lemma.riccati.oligopoly}, 
the pair \((F_m^{\Theta^{(F_{-m},f_{-m})}_m}, f_m^{\Theta^{(F_{-m},f_{-m})}_m})\) is the unique minimizer, independent of the initial price $p$, and continuous in \((F_{-m},f_{-m})\), so 
Assumption~\ref{ass.def.bestresponsemap} holds with
\begin{equation}\label{def.psivarphi.oligopoly}
    \psi_m(F_{-m},f_{-m}) = F_m^{\Theta^{(F_{-m},f_{-m})}_m}, \quad
    \varphi_m(F_{-m},f_{-m}) = f_m^{\Theta^{(F_{-m},f_{-m})}_m}. 
\end{equation}
The continuity of \(\Psi\) follows from \eqref{def.psivarphi.oligopoly} and the continuity of the closed-form expressions in the parameters \((F_{-m},f_{-m})\). Since \(\mathcal{A}\times\mathcal{B}\) is compact, Brouwer's fixed-point theorem guarantees a fixed point \((F^*,f^*)\) of \(\Psi\), which by definition satisfies the coupled system \eqref{NE.oligopoly.system}. Lemma~\ref{lemma:characterize_ne} then implies that \((F^*,f^*)\) is a feedback Nash equilibrium. This proves part 2). The proof is complete.  
\end{proof}

\subsubsection{Proof of Lemma \ref{lemma.uniform.Phimcontinuity}}

\begin{proof}{Proof}
Fix any $(F_{-m},f_{-m})\in\mathcal{A}_{-m}\times\mathcal{B}_{-m}$ 
and let $\Theta^{(F_{-m},f_{-m})}_m$ be as defined in~\eqref{def.Thetam.Ff}.

\noindent\textbf{Part 1.} 
Lemma~\ref{lemma.oligopoly.selfmap} shows that 
$\Theta^{(F_{-m},f_{-m})}_m$ satisfies the conditions of 
Lemma~\ref{lemma.riccati.oligopoly} with strict inequalities 
(see~\eqref{condition.riccati.oligopoly}, \eqref{oligopoly.estimate.F}, 
\eqref{oligopoly.estimate.f}). 
Since these inequalities involve only the components of $\Theta^{(F_{-m},f_{-m})}_m$ 
and are strict, and since the bounds used in the estimates are uniform over the 
compact set $\mathcal{A}_{-m}\times\mathcal{B}_{-m}$, there exists 
$\delta_m>0$ independent of $(F_{-m},f_{-m})$ such that for any 
$\tilde{\Theta}_m$ with $\|\tilde{\Theta}_m-\Theta^{(F_{-m},f_{-m})}_m\|\le\delta_m$, 
the same strict inequalities hold. 
Consequently, Lemma~\ref{lemma.riccati.oligopoly}(2) applies and yields 
$(\tilde{\Theta}_m,\tilde{\Sigma}_m)\in\mathcal{S}_m$ for any 
$\tilde{\Sigma}_m$ with $\|\tilde{\Sigma}_m-\Sigma_{\omega}\|\le\delta_m$.
This establishes part~1) of Assumption~\ref{ass.uniform.Phimcontinuity}.

\noindent\textbf{Part 2.} 
The explicit formulas~\eqref{oligopoly.Phim}-\eqref{oligopoly.phim} are smooth 
functions of the components of $\Theta^{(F_{-m},f_{-m})}_m$ over the compact 
parameter range induced by $\mathcal{A}_{-m}\times\mathcal{B}_{-m}$. 
The uniform lower bounds~\eqref{oligopoly.estimatePG} and~\eqref{oligopoly.estimatef1} 
ensure that the denominators in these formulas stay bounded away from zero 
uniformly over $(F_{-m},f_{-m})$. 
By continuity and compactness, there exists a constant $L_m<\infty$ such that 
for all $(F_{-m},f_{-m})\in\mathcal{A}_{-m}\times\mathcal{B}_{-m}$ and all 
$(\tilde{\Theta}_m,\tilde{\Sigma}_m,\alpha_m)$ with 
\(
\max\{\|\tilde{\Theta}_m-\Theta^{(F_{-m},f_{-m})}_m\|,
\|\tilde{\Sigma}_m-\Sigma_{\omega}\|,\alpha_m\}\le\delta_m,
\)
the Lipschitz bounds required by part~2) of 
Assumption~\ref{ass.uniform.Phimcontinuity} hold. 
The independence of the minimizer from the initial state $x$ follows from 
Lemma~\ref{lemma.riccati.oligopoly}(2).  
\end{proof}

\subsubsection{Proof of Lemma \ref{lemma.contraction}}

\begin{proof}{Proof}
By Lemma \ref{lemma.oligopoly.selfmap}, $(F^*,f^*)$ is a fixed point of $\Psi$. We consider any $(\hat{F},\hat{f})\in\mathcal{A}\times\mathcal{B}$ and let 
$(F,f)=\Psi((\hat{F},\hat{f}))$. Under Condition \ref{condition.oligopoly.selfmap}, Lemma \ref{lemma.oligopoly.selfmap} provides the explicit formulas \eqref{def.psivarphi.oligopoly} for $(F_m,f_m)$.

\noindent\textbf{Step 1: Lipschitz estimate for $P_m$.}
Applying a differencing argument to formula \eqref{oligopoly.Pm} and using the 
uniform bound $|P_m| \le \overline{C}_P$ from~\eqref{oligopoly.estimatePG}, we obtain $|P_m - P_m^*| \le L_{P,m}|A^{(\hat{F}_{-m})}-A^{(F^*_{-m})}|$, where $L_{P,m}$ is defined in \eqref{def.LPm} and its denominator is positive by Part 1) of 
Condition \ref{condition.oligopoly.contractivity}.

\noindent\textbf{Step 2: Lipschitz estimate for $F_m$.}
Applying a differencing argument to formula~\eqref{oligopoly.Phim} and substituting 
the bound from Step~1 and the uniform lower bound on the denominator 
from~\eqref{oligopoly.estimatePG}, we obtain $|F_m-F_m^*|\le L_{F,m}|A^{(\hat{F}_{-m})}-A^{(F^*_{-m})}|$, where $L_{F,m}$ is given by \eqref{def.LFm}. Since $|A^{(\hat{F}_{-m})}-A^{(F^*_{-m})}| = (1-s)b\sum\limits_{j\neq m}|\hat{F}_j-F_j^*|$, summing over $m$ and rearranging yields
\begin{align*}
\sum_{m=1}^M |F_m-F_m^*| \le \max_{1\le j\le M} \Big\{(1-s)b \sum_{m\neq j} L_{F,m}\Big\} \sum_{j=1}^M |\hat{F}_j-F_j^*|.
\end{align*}

\noindent\textbf{Step 3: Lipschitz estimate for $f_m$.}
Applying a differencing argument to formula~\eqref{oligopoly.phim} and substituting 
the bounds from Steps~1-2 together with~\eqref{oligopoly.estimatef1},  
we obtain $|f_m-f_m^*|\le L_{f,m}|A^{(\hat{f}_{-m})}_0-A^{(f^*_{-m})}_0| 
+ L_{fF,m}|A^{(\hat{F}_{-m})}-A^{(F^*_{-m})}|$, where $L_{f,m}$ is given by \eqref{def.Lfm} and $L_{fF,m}<\infty$ 
depends only on the model parameters and the bounds already established. Summing over $m$ and rearranging analogously to Step~2 then gives
\begin{align*}
\sum_{m=1}^M |f_m-f_m^*| \le \max_{1\le j\le M} \Big\{(1-s)b \sum_{m\neq j} L_{fF,m}\Big\} \sum_{j=1}^M |\hat{F}_j-F_j^*|  + \max_{1\le j\le M} \Big\{(1-s)b \sum_{m\neq j} L_{f,m}\Big\} \sum_{j=1}^M |\hat{f}_j-f_j^*|.
\end{align*}

\noindent\textbf{Step 4.}
Let $\zeta$ be defined as in \eqref{def.oligopoly.zeta}, and choose $\mu   = \frac{2 \max\limits_{1\le j\le M} \big\{(1-s)b \sum\limits_{m\neq j} L_{fF,m}\big\}}
            {1-\max\limits_{1\le j\le M} \big\{(1-s)b \sum\limits_{m\neq j} L_{F,m}\big\}}$.  
Substituting the estimates from Steps~2-3 into the definition of 
\(\|\cdot\|_{\mu,1,2}\) in Assumption~\ref{ass.Psi_contractive_selfmap},  
a direct calculation yields
\begin{equation*}
\bigl\|\Psi((\hat{F},\hat{f})-\Psi((F^*,f^*))\bigr\|_{\mu,1,2}
\le \zeta\,\bigl\|(\hat{F},\hat{f})-(F^*,f^*)\bigr\|_{\mu,1,2}.
\end{equation*}
Parts 2) and 3) of Condition \ref{condition.oligopoly.contractivity} ensure that $\zeta<1$, so Assumption \ref{ass.Psi_contractive_selfmap} is satisfied.  
\end{proof}

\section{A Numerical Example with Multiple Equilibria}\label{appendix.multidim}

In this section, we consider a    linear-quadratic games with no cross term (\(H_m=0\)) and \(Q_m\succ0\), which allow for multi-dimensional state and action spaces. This setting is standard in the LQ game literature \citep{mazumdar2020policy,hambly2023policy}. We first describe the implementation of  Algorithm~\ref{algorithm.indepedentlearning.Mplayer} in this setting, and then present a numerical example with two distinct Nash equilibria, for which  the learning dynamics may converge to different equilibria or oscillate across runs.

\subsection{Algorithm Implementation}

We first analyze the unconstrained control problem and characterize its solution. Using this result, we then provide a verifiable characterization of the feasible set $\mathcal{S}_m$ and describe a practical method for computing the greedy strategy. 
Throughout this subsection, player $m$ serves as the representative agent, and we denote its parameter estimate by $(\Theta_m, \Sigma_m)$. For convenience, we partition $\Theta_m$ as $\Theta_m = [A'_0,\; A',\; B'_m]^\top$, where $A'_0 \in \mathbb{R}^{n \times 1}$ is the coefficient vector for the constant term, $A' \in \mathbb{R}^{n \times n}$ for the state $x_t$, and $B'_m \in \mathbb{R}^{n \times d_m}$ for player $m$'s control $u_t^m$ in the perceived dynamics \eqref{dynamic:statistical}.

\subsubsection{The Unconstrained Optimal Strategy}

We begin by studying the LQ control problem without the compact constraints $\mathcal{A}_m\times\mathcal{B}_m$. The following lemma can be derived using standard results from linear quadratic control theory (see, e.g., \cite{hager1976convergence}). 

\begin{lemma}\label{lemma:unconstrained_multidim}
Assume that the pair $(\sqrt{\rho}A', \sqrt{\rho}B'_m)$ is stabilizable and that $(\sqrt{\rho}A', Q_m)$ is detectable (e.g., $Q_m\succ0$ guarantees detectability). Then the following hold.

1) For every initial state $x\in\mathbb{R}^n$ and exploration scale $\alpha_m\ge0$, the infimum
\begin{equation*}
 \inf_{(F_m,f_m)\in \mathbb{R}^{d_m\times n}\times\mathbb{R}^{d_m}} J_m^{\Theta_m,\Sigma_m}\big((F_m,f_m);x,\alpha_m\big)
\end{equation*}
is finite and uniquely attained at the pair $(F_m^{\Theta_m}, f_m^{\Theta_m})$ given by
{\small
\begin{align}
F_m^{\Theta_m} &= \rho \big(R_m + \rho B_m'^\top P_m^{\Theta_m} B_m'\big)^{-1} B_m'^\top P_m^{\Theta_m} A', \label{eq:unconstrained_F}\\[4pt]
f_m^{\Theta_m} &= \big(R_m + \rho B_m'^\top P_m^{\Theta_m} B_m'\big)^{-1}\Big( \rho B_m'^\top \big(I - \rho (A' - B_m' F_m^{\Theta_m})^\top\big)^{-1} \big(\tfrac12 ((F_m^{\Theta_m})^\top r_m-q_m)-P_m^{\Theta_m} A_0' \big)-  \tfrac{r_m}2\Big), \label{eq:unconstrained_f}
\end{align}
}
where $P_m^{\Theta_m}$ is the unique positive semidefinite solution of the 
following equation
\begin{equation*}\label{eq:riccati_multidim}
P_m^{\Theta_m} = Q_m + \rho A'^\top P_m^{\Theta_m} A' - \rho^2 A'^\top P_m^{\Theta_m} B_m' \big(R_m + \rho B_m'^\top P_m^{\Theta_m} B_m'\big)^{-1} B_m'^\top P_m^{\Theta_m} A'. 
\end{equation*}
    
2) If in addition $(F_m^{\Theta_m}, f_m^{\Theta_m})\in\mathcal{A}_m\times\mathcal{B}_m$, 
    then this pair is the unique minimizer of the constrained problem 
    \eqref{discount_cost_m} for every $x$ and $\alpha_m$.

\end{lemma}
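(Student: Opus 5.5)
The plan is to reduce the problem to standard discounted LQ control via a value-function argument with a quadratic-plus-affine ansatz. This uses $H_m=0$ and the perceived dynamics $x_{t+1}=A_0'+A'x_t+B_m'u_t+\omega_t'$.

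For a fixed linear policy $u_t=-F_mx_t+f_m+\alpha_m v_t^m$ with $\sqrt\rho\,r(A'-B_m'F_m)<1$, the cost is finite and has the form
\[
x^\top P x+ p^\top x+c,
\]
with $(P,p,c)$ solving a discounted Lyapunov system. If $\sqrt\rho\,r(A'-B_m'F_m)\ge1$, detectability of $(\sqrt\rho A',Q_m)$ together with $Q_m\succeq0$ and $R_m\succ0$ forces the cost to be $+\infty$; this follows the standard argument, e.g. \cite{hager1976convergence}. So the infimum can be restricted to stabilizing $F_m$, and every cost in that class is nonnegative up to affine terms, hence bounded below.

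The Bellman equation is
\[
V(x)=\min_u \Big\{x^\top Q_mx+x^\top q_m+u^\top R_mu+u^\top r_m+\rho\,\mathbb E\,V(A_0'+A'x+B_m'u+\omega')\Big\},
\]
which I will solve with the ansatz
\[
V(x)=x^\top P x+p^\top x+c .
\]
Matching quadratic terms gives the discounted Riccati equation for $P_m^{\Theta_m}$. Stabilizability of $(\sqrt\rho A',\sqrt\rho B_m')$ and detectability give existence and uniqueness of the PSD solution, and the induced gain $F_m^{\Theta_m}$ in \eqref{eq:unconstrained_F} is $\sqrt\rho$-stabilizing; I will cite this as standard (e.g. \cite{hager1976convergence}). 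Matching linear terms gives a linear equation
\[
p=\rho(A'-B_m'F_m^{\Theta_m})^\top p+(\text{known}),
\]
which is solvable because $I-\rho(A'-B_m'F_m^{\Theta_m})^\top$ is invertible: its spectral radius is below $\sqrt\rho<1$. Substituting $p$ into the first-order condition in $u$ yields $f_m^{\Theta_m}$ in \eqref{eq:unconstrained_f}. The noise terms $\Sigma_m$ and $\alpha_m^2$ enter only the constant $c$, so $(F_m^{\Theta_m},f_m^{\Theta_m})$ is independent of $x$, $\alpha_m$ and $\Sigma_m$.

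Optimality and uniqueness within linear policies follow from a verification argument. For any stabilizing $(F_m,f_m)$, telescoping $\rho^tV(x_t)$ shows that the cost equals $V(x)$ plus
\[
\sum_t\rho^t\,\mathbb E\,(u_t-u_t^*)^\top\big(R_m+\rho B_m'^\top P_m^{\Theta_m}B_m'\big)(u_t-u_t^*) .
\]
The boundary term vanishes because $\rho^t\mathbb E\|x_t\|^2\to0$ under stabilizing policies. Since $R_m\succ0$, this excess is zero only if $u_t=u_t^*$ almost surely. The exploration part cancels in the difference, so the condition reduces to
\[
(F_m-F_m^{\Theta_m})x_t=f_m-f_m^{\Theta_m}
\]
almost surely for all $t$. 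Because $\Sigma_m\succ0$, the state $x_t$ has a nondegenerate distribution for $t\ge1$, which forces $F_m=F_m^{\Theta_m}$ and $f_m=f_m^{\Theta_m}$.

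Part 2) is immediate. The constrained feasible set is a subset of the unconstrained one and contains the unique unconstrained minimizer, so that minimizer is the unique constrained minimizer for every $x,\alpha_m$. The main obstacle is the claim that non-stabilizing gains yield $+\infty$ cost rather than an indefinite value. This needs detectability, and care is required because the linear terms $q_m,r_m$ could in principle compete with the quadratic growth. I would handle it by noting that the quadratic part grows like $\rho^t\|x_t\|^2$ along an undetected-free unstable mode, and therefore dominates the linear terms.
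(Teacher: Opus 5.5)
Your proposal is correct and is the standard dynamic-programming/verification argument the paper relies on when it simply cites \cite{hager1976convergence}; eliminating $f$ from the linear-term matching via $(F_m^{\Theta_m})^\top R_m=\rho(A'-B_m'F_m^{\Theta_m})^\top P_m^{\Theta_m}B_m'$ reproduces \eqref{eq:unconstrained_f} exactly. Two of your steps use $\Sigma_m\succ0$, which the lemma does not state but genuinely needs: your uniqueness step, and your $+\infty$ claim for non-stabilizing gains, where the divergence comes from the noise-driven term $\mathrm{tr}\bigl((Q_m+F_m^\top R_mF_m)\,\mathrm{Cov}(x_t)\bigr)$ together with closed-loop detectability, not from detectability alone. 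Indeed, with $A_0'=0$, $q_m=r_m=0$, $\Sigma_m=0$, $\alpha_m=0$ and $x=0$, every pair $(F_m,0)$ attains the minimal cost $0$, so you should state $\Sigma_m\succ0$ explicitly as an added hypothesis.
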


\subsubsection{Characterization of the Feasible Set $\mathcal{S}_m$}

For LQ games with \(H_m=0\) and \(Q_m\succ0\), the feasible set \(\mathcal{S}_m\) defined in \eqref{def:feasible_set} admits a spectral characterization stated in Lemma~\ref{lemma.multidim.feasibleset}. We omit its proof, which follows by applying   \cite[Theorem~5.6.12]{horn2012matrix} to the dynamics under linear feedback, together with the observation that the positive definiteness of \(Q_m\) rules out unbounded negative cost.

\begin{lemma}\label{lemma.multidim.feasibleset}
For LQ games with $H_m=0$ and $Q_m\succ0$, the feasible set $\mathcal{S}_m$ defined in \eqref{def:feasible_set} coincides with \begin{equation}\label{feasible_set_multidim}
    \tilde{\mathcal{S}}_m = \left\{(\Theta_m,\Sigma_m) : \exists F_m\in \mathcal{A}_m \text{ such that } r(\sqrt{\rho}(A'-B'_m F_m)) < 1 \right\}.
\end{equation}
\end{lemma}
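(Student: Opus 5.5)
This lemma is the multidimensional analogue of Lemma~\ref{lemma.oligopoly.feasibleset}. I would prove it by two inclusions, starting from an explicit expression for the cost of a fixed linear strategy. For $(F_m,f_m)$ and $\alpha_m\ge0$, the perceived closed-loop dynamics are $x_{t+1}=(A'_0+B'_mf_m)+(A'-B'_mF_m)x_t+\alpha_mB'_mv^m_t+\omega'_t$. Set $A_{\mathrm{cl}}=A'-B'_mF_m$. With $H_m=0$, the running cost is $x_t^\top(Q_m+F_m^\top R_mF_m)x_t$ plus terms linear in $x_t$ and constants, with constants depending on $f_m$, $\alpha_m$ and $\operatorname{tr}(R_m)$. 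Hence the cost is $\sum_t\rho^t\big(\operatorname{tr}((Q_m+F_m^\top R_mF_m)\mathbb{E}[x_tx_t^\top])+\ell^\top\mathbb{E}[x_t]+c\big)$. The matrix $Q_m+F_m^\top R_mF_m\succeq Q_m\succ0$, so the quadratic part is nonnegative, and it dominates the linear part because $|\ell^\top\mathbb{E}x_t|\le \epsilon\,\mathbb{E}\|x_t\|^2+C_\epsilon$. It follows that the cost is always $>-\infty$. This is the observation that rules out unbounded negative cost.

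\emph{Inclusion $\tilde{\mathcal S}_m\subseteq\mathcal S_m$.} Pick $F_m\in\mathcal A_m$ with $r(\sqrt\rho A_{\mathrm{cl}})<1$. By \cite[Theorem~5.6.12]{horn2012matrix}, $(\sqrt\rho A_{\mathrm{cl}})^t\to0$, and $\rho^{t}\|A_{\mathrm{cl}}^t\|^2\le C\gamma^t$ for some $\gamma<1$. Unrolling the recursion $\mathbb{E}[x_{t+1}x_{t+1}^\top]=A_{\mathrm{cl}}\mathbb{E}[x_tx_t^\top]A_{\mathrm{cl}}^\top+(\text{terms linear in }\mathbb{E}x_t)+\text{const}$ gives $\rho^t\mathbb{E}\|x_t\|^2\le C\big(\gamma^t\|x\|^2+\sum_{s<t}\gamma^{t-s}\rho^{s}\big)$, which is summable since $\rho<1$. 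A similar bound holds for $\rho^t\|\mathbb{E}x_t\|$. Hence $J(F_m,0;x,\alpha_m)<\infty$ for every $x$ and every $\alpha_m$, and the infimum is $<\infty$. The infimum is also $>-\infty$: the cost is bounded below by a constant that is uniform over the compact set $\mathcal A_m\times\mathcal B_m$, because the linear coefficients $\ell$ are bounded there. Therefore $(\Theta_m,\Sigma_m)\in\mathcal S_m$.

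\emph{Inclusion $\mathcal S_m\subseteq\tilde{\mathcal S}_m$.} I argue by contrapositive. Suppose every $F_m\in\mathcal A_m$ has $r(\sqrt\rho A_{\mathrm{cl}})\ge1$. Since $\Sigma_m\succ0$, the covariance satisfies $\operatorname{Cov}(x_t)\succeq\sum_{s<t}A_{\mathrm{cl}}^{s}\Sigma_m(A_{\mathrm{cl}}^{s})^\top$. Taking $u$ a left eigenvector for an eigenvalue $\lambda$ with $\rho|\lambda|^2\ge1$ gives $u^*\operatorname{Cov}(x_t)u\ge\sigma_{\min}(\Sigma_m)\sum_{s<t}|\lambda|^{2s}\|u\|^2$. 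Weighting by $\rho^t$ and using $Q_m\succ0$, the quadratic part of the cost diverges to $+\infty$, and the linear part cannot compensate by the domination argument above. So the cost is $+\infty$ for every $(F_m,f_m)$, the infimum equals $+\infty$, and $(\Theta_m,\Sigma_m)\notin\mathcal S_m$.

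The main obstacle is the boundary case $\rho|\lambda|^2=1$. There $\rho^t\sum_{s<t}|\lambda|^{2s}=\sum_{s<t}\rho^{t-s}$ stays bounded, so it alone does not diverge. Instead I would sum over $t$: $\sum_t\rho^t\,u^*\operatorname{Cov}(x_t)u\ge c\sum_t\sum_{s<t}\rho^{t-s}=\infty$. This is a double-sum argument, and it diverges because the inner sum does not vanish. I would also handle complex eigenvectors by working with real and imaginary parts, and check that the cross linear terms are dominated uniformly in $t$.
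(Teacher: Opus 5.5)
Your proof is correct and follows the route the paper indicates when it omits the argument: Horn--Johnson's Theorem~5.6.12 applied to the closed-loop matrix $A'-B'_mF_m$, plus $Q_m\succ0$ to exclude unbounded negative cost. The boundary case $\rho|\lambda|^2=1$ needs no special care, since keeping only the $s=t-1$ term already gives $\rho^t\,\mathbb{E}[x_t^\top Q_m x_t]\ge c\,\rho\,(\rho|\lambda|^2)^{t-1}\ge c\rho>0$ for every $t\ge1$.

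The one point to flag is that your converse uses $\Sigma_m\succ0$, which the paper never imposes on the perceived covariance; it even stresses that $\tilde{\mathcal S}_m$ places no restriction on $\Sigma_m$. If $\Sigma_m$ is only positive semidefinite, take $\alpha_m>0$ so that $\alpha_m B'_m v_t^m$ excites every controllable unstable mode of $A'-B'_mF_m$ (PBH test). Uncontrollable unstable modes evolve independently of $(F_m,f_m)$, so a single generic initial state $x$ makes the cost infinite for all admissible strategies.
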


Note that $\tilde{\mathcal{S}}_m$ imposes no restriction on $\Sigma_m$ because the well‑posedness of problem \eqref{discount_cost_m} is determined solely by the drift parameters. When implementing Algorithm \ref{algorithm.indepedentlearning.Mplayer}, we decide whether to accept a given $(\Theta_m,\Sigma_m)$ or fall back by checking whether it lies in $\tilde{\mathcal{S}}_m$; specifically, this requires determining if there exists an $F_m\in\mathcal{A}_m$ such that $r(\sqrt{\rho}(A'-B'_m F_m)) < 1$. For low-dimensional problems, a simple grid search over $\mathcal{A}_m$ can be used. In general, we treat the problem as a feasibility optimization: minimize $r(\sqrt{\rho}(A'-B'_m F_m))$ subject to $F_m\in\mathcal{A}_m$; if the minimal value is less than $1$, then $(\Theta_m,\Sigma_m)$ is feasible. This is a non‑convex constrained problem. A Sequential Quadratic Programming (SQP) solver 
(or its variants adapted to non‑smooth objectives) with multiple random 
initializations~\citep{nocedal2006numerical,boggs1995sequential} can be employed. This numerical verification is practical for moderate dimensions. In the next subsection, we assume that the accepted parameter set already lies in $\tilde{\mathcal{S}}_m$ and focus on computing the greedy strategy.

\subsubsection{Greedy Strategy Calculation}\label{Appendix.greedy}

We now provide a practical method for computing the greedy strategy in Algorithm~\ref{algorithm.indepedentlearning.Mplayer}. For any accepted parameter set $(\tilde{\Theta}^{(k)}_m,\tilde{\Sigma}^{(k)}_{m})\in\tilde{\mathcal{S}}_m$, the unconstrained optimum is well defined (Lemma~\ref{lemma:unconstrained_multidim}). If this unconstrained optimum lies inside the admissible set $\mathcal{A}_m\times\mathcal{B}_m$, it is optimal for the constrained problem \eqref{discount_cost_m}. Otherwise, we must solve a constrained optimization problem. Because the problem is generally non‑convex in $(F_m,f_m)$ \citep{fazel2018global}, we adopt the SQP approach described  
above, combined with a multi‑start heuristic to mitigate the risk of converging to a poor local minimum. The cost function for the SQP solver is given by the standard LQ representation. The following algorithm summarizes the procedure.

\begin{algorithm}[ht]
\caption{LQ Game without Cross Terms: Greedy Strategy Calculation for Player $m$}
\label{algorithm.multidim.greedypolicycalculation}
\begin{algorithmic}[1]
\STATE \textbf{Require:} Accepted parameter set $(\tilde{\Theta}^{(k)}_m,\tilde{\Sigma}^{(k)}_{m})$, exploration noise scale $\alpha_m^{(k)}$, initial state $x_{\bar{\tau}^{(k)}_m}$
\STATE Compute the unconstrained optimal strategy $(F_m^{\tilde{\Theta}^{(k)}_m},f_m^{\tilde{\Theta}^{(k)}_m})$ via (\ref{eq:unconstrained_F})-(\ref{eq:unconstrained_f}).
\IF{$(F_m^{\tilde{\Theta}^{(k)}_m},f_m^{\tilde{\Theta}^{(k)}_m}) \in \mathcal{A}_m \times \mathcal{B}_m$}
    \STATE \textbf{return} $(F_m^{\tilde{\Theta}^{(k)}_m},f_m^{\tilde{\Theta}^{(k)}_m})$.
\ELSE
    \STATE Generate $N_{\text{start}}$ random initial points uniformly in $\mathcal{A}_m\times\mathcal{B}_m$.
    \FOR{$i = 1$ to $N_{\text{start}}$}
        \STATE Run an SQP solver from the $i$-th initial point for  a local minimizer $(F_m^{(i)},f_m^{(i)})$ and its 
        cost $J^{(i)}$.
    \ENDFOR
    \STATE Return the pair with the smallest cost: $(F_m^*,f_m^*) = \arg\min_{i} J^{(i)}$.
\ENDIF
\end{algorithmic}
\end{algorithm}

\subsection{Numerical Example with Multiple Equilibria}\label{appendix.multiNE}

This section presents a numerical experiment illustrating the behavior of Algorithm~\ref{algorithm.indepedentlearning.Mplayer} in a two‑player LQ game with at least two  distinct complete-information Nash equilibria. Gaussian noise is used for both the state and exploration noises throughout. The system parameters are \(A=1.0700\), \(B_1=0.10254\), \(B_2=0.085934\), \(A_0=0\), \(\Sigma_{\omega}=0.1^2\), \(Q_1=0.11112\), \(R_1=0.30872\), \(Q_2=0.25806\), \(R_2=0.4949\), and \(H_m=q_m=r_m=0\) for \(m=1,2\). The discount factor is \(\rho = 0.9999\), and the admissible bounds are \(\kappa_m = 2.0\) and \(\kappa'_m = 1.0\) for \(m=1,2\). The algorithm parameters are \(\tau=300\), \(\lambda=1.1\), \(\underline{\tau}=2000\), \(\bar{\tau}=200\), \(\nu=0.25\), and \(\beta_m=0.01\) for each player. At each update, the asynchronous shift for each player is sampled independently from a uniform distribution on \([0, \bar{\tau}]\). The total number of time steps is \(T\approx 2.8\times10^8\), ensuring that every player completes 120 updates per run. 
We find two feedback Nash equilibria:
\begin{itemize}
    \item \textbf{Equilibrium 1:} \(F_1=0.8908,\; f_1=0.0000,\; F_2=0.4980,\; f_2=0.0000\). The spectral radius of \(D\Psi\) at this equilibrium is \(0.9709\).
    \item \textbf{Equilibrium 2:} \(F_1=0.3718,\; f_1=0.0000,\; F_2=1.1423,\; f_2=0.0000\). The spectral radius of \(D\Psi\) at this equilibrium is \(0.9203\).
\end{itemize}

Both equilibria are locally stable, but the existence of multiple fixed points violates Assumption~\ref{ass.Psi_contractive_selfmap}. Additionally, Assumption~\ref{ass.stationary_x} is violated since  \(|A|>1\).

We run 100 independent simulations of Algorithm~\ref{algorithm.indepedentlearning.Mplayer}, where each simulation independently selects an initial guess uniformly at random from the admissible strategy sets \(\mathcal{A}_m \times \mathcal{B}_m\). Outcomes are classified into four categories following the same criteria as in Section~\ref{numerical.violateassumption}: explode (the state grows without bound, or the algorithm diverges), oscillatory (persistent fluctuations without settling), converge to Equilibrium 1, and converge to Equilibrium 2. Table~\ref{tab:multiNE_outcomes} summarizes the results.

\begin{table}[!htbp]
\caption{Distribution of experimental outcomes for the multiple‑equilibria example}
\centering
\begin{tabular}{cccc}
\hline
 Explode (\%) & Oscillatory (\%) & Converge to Equilibrium 1 (\%) & Converge to Equilibrium 2 (\%) \\
\hline
 35.00 & 36.00 & 5.00 & 24.00 \\
\hline
\end{tabular}
\label{tab:multiNE_outcomes}
\end{table}

Because Assumption~\ref{ass.stationary_x} is violated, some simulations explode, causing the algorithm to fail. In the remaining runs, we observe three types of behavior: convergence to Equilibrium 1, convergence to Equilibrium 2, or persistent best‑response cycles. This phenomenon is consistent with the synchronous Riccati iteration results reported in \cite{nortmann2024nash} (see Figure 3(c) there). Unlike the deterministic best‑response iteration studied there, Algorithm~\ref{algorithm.indepedentlearning.Mplayer} is subject to estimation errors and exploration noises; hence there is no sharp deterministic partition of the region of initial guesses that determines convergence to a specific equilibrium. However, because both equilibria are locally stable, each possesses a neighborhood such that once the algorithm stays within that neighborhood, it tends to converge to the corresponding equilibrium. This explains why the behavior in this example differs from that of Section~\ref{numerical.violateassumption} (Group 2), where all simulations exhibited oscillations and none converged, whereas here some simulations do converge to equilibria.

\end{document}